\providecommand{\U}[1]{\protect\rule{.1in}{.1in}}
\DeclareMathOperator*{\inset}{\mathbb{I}}
\newtheorem{proposition}{Proposition}[section]
\newtheorem{theorem}[proposition]{Theorem}
\newtheorem{lemma}[proposition]{Lemma}
\newtheorem{open problem}[proposition]{Open problem}
\newtheorem{definition}[proposition]{Definition}
\newtheorem{remark}[proposition]{Remark}
\newtheorem{example}[proposition]{Example}
\newtheorem{assumption}{Assumption}
\numberwithin{equation}{section}
\begin{document}

\title{Large deviations for configurations generated by Gibbs distributions
	with energy functionals consisting of singular interaction and weakly
	confining potentials}
\author{Paul Dupuis\thanks{%
		Research supported in part by AFOSR FA9550-12-1-0399}, Vaios Laschos\thanks{%
		Research supported in part by AFOSR FA9550-12-1-0399}, and Kavita Ramanan%
	\thanks{%
		Research supported in part by AFOSR FA9550-12-1-0399 and NSF DMS-1407504}}
\date{Division of Applied Mathematics \\
	Brown University \\
	Providence, RI\\
	[\baselineskip] \today}
\maketitle

\begin{abstract}
	We establish large deviation principles (LDPs) for empirical measures
	associated with a sequence of Gibbs distributions on $n$-particle
	configurations, each of which is defined in terms of an inverse temperature $%
	\beta_n$ and an energy functional consisting of a (possibly singular)
	interaction potential and a (possibly weakly) confining potential. Under
	fairly general assumptions on the potentials, we use a common framework to
	establish LDPs both with speeds $\beta_n/n \rightarrow \infty$, in which
	case the rate function is expressed in terms of a functional involving the
	potentials, and with speed $\beta_n =n$, when the rate function contains an
	additional entropic term. Such LDPs are motivated by questions arising in
	random matrix theory, sampling, simulated annealing and asymptotic convex
	geometry. Our approach, which uses the weak convergence method developed by
	Dupuis and Ellis,
	establishes LDPs with respect to stronger Wasserstein-type topologies. 
	Our results address several interesting examples not covered by previous works, 
	including the case of a weakly confining potential, which allows for rate
	functions with minimizers that do not have compact support, thus resolving several open
	questions raised in a work of Chafa\"{\i} et al.
\end{abstract}
\vspace{32pt}
\noindent\emph{2010 Mathematics Subject Classification. } Primary: 60F10,
60K35; Secondary: 60B20

\noindent\emph{Key Words and Phrases. } Large deviations principle,
empirical measures, Gibbs distributions, interacting particle systems,
singular interaction potential, weakly confining potential, rate function,
weak topology, Wasserstein topology, relative entropy, Coulomb gases, random
matrices, asymptotic thin shell condition. 
\newpage

{ 
 \tableofcontents  } 
\newpage
\section{Introduction}

\subsection{Description of problem and contributions}

We consider configurations of a finite number of $\mathbb{R}^{d}$-valued
particles that are subject to an external force consisting of a confining
potential $V:\mathbb{R}^{d}\rightarrow (-\infty ,+\infty ]$ that acts on
each particle and a pairwise interaction potential $W:\mathbb{R}^{d}\times 
\mathbb{R}^{d}\rightarrow (-\infty ,+\infty ]$. For every $n\in \mathbb{N},$
we define a Hamiltonian or energy functional $H_{n}:\mathbb{R}%
^{dn}\rightarrow (-\infty ,+\infty ]$ that assigns to every
$\mathbb{R}^{dn}$-valued 
configuration $\mathbf{x}^{n}=(\mathbf{x}_{1},\mathbf{x}_{2},\ldots ,\mathbf{x}_{n})$ of $n$ particles, the energy 
\begin{equation}
H_{n}(\mathbf{x}^{n})=H_{n}\left( \mathbf{x}_{1},...,\mathbf{x}_{n}\right) :=%
\frac{1}{n}\sum_{i=1}^{n}V\left( \mathbf{x}_{i}\right) +\frac{1}{2n^{2}}%
\sum_{i,j=1,i\neq j}^{n}W\left( \mathbf{x}_{i},\mathbf{x}_{j}\right) .
\label{def-Hn}
\end{equation}%
Also, for any $n$-particle configuration $\mathbf{x}^{n}=(\mathbf{x}_{1},%
\mathbf{x}_{2},\ldots ,\mathbf{x}_{n})$, let $L_{n}(\mathbf{x}^{n},\cdot )$
be the associated empirical measure: 
\begin{equation}
L_{n}\left( \mathbf{x}^{n};\cdot \right) :=\frac{1}{n}\sum_{i=1}^{n}\delta _{%
	\mathbf{x}_{i}}(\cdot ),  \label{Ln}
\end{equation}%
where $\delta _{\mathbf{y}}$ represents the Dirac delta mass at $\mathbf{y}%
\in \mathbb{R}^{d}$. Given a separable metric space $S$, let $\mathcal{B}(S)$
denote the collection of Borel subsets of $S$, and let $\mathcal{P}(S)$
denote the space of probability measures on $(S,\mathcal{B}(S))$. Note that
for every $\mathbf{x}^{n}\in \mathbb{R}^{d}$, $L_{n}(\mathbf{x}^{n};\cdot )$
lies in $\mathcal{P}(\mathbb{R}^{d})$, where $\mathbb{R}^{d}$ is equipped
with the usual Euclidean metric. If $\mathbf{x}^{n}$ is random and each
component of $\mathbf{x}^{n}$ has a density that is absolutely continuous
with respect to a measure with no atoms (which will be true in this
article), $H_{n}$ can be rewritten in terms of $L_{n}$ as follows: 
\begin{equation}
\begin{split}
H_{n}(\mathbf{x}^{n})& =\int_{\mathbb{R}^{d}}V\left( \mathbf{x}\right)
L_{n}\left( \mathbf{x}^{n};d\mathbf{x}\right) +\frac{1}{2}\int_{(\mathbb{R}%
	^{d}\times \mathbb{R}^{d})_{\neq }}\!\!\!\!\!\!\!\!\!\!\!\!W\left( \mathbf{x}%
,\mathbf{y}\right) L_{n}\left( \mathbf{x}^{n};d\mathbf{x}\right) L_{n}\left( 
\mathbf{x}^{n};d\mathbf{y}\right) \\
& =\frac{1}{n}\int_{\mathbb{R}^{d}}V\left( \mathbf{x}\right) L_{n}\left( 
\mathbf{x}^{n};d\mathbf{x}\right) +\frac{1}{2}\int_{(\mathbb{R}^{d}\times 
	\mathbb{R}^{d})_{\neq }}\!\!\!\!\!\!\!\!\!\!\!\!\!\!(W\left( \mathbf{x},%
\mathbf{y}\right) +V(\mathbf{x})+V(\mathbf{y}))L_{n}\left( \mathbf{x}^{n};d%
\mathbf{x}\right) L_{n}\left( \mathbf{x}^{n};d\mathbf{y}\right) ,
\end{split}
\label{def-Hnequiv}
\end{equation}%
where for  { $k \in \mathbb{N}$} and a set $A\subset \mathbb{R}^{kd},$ the symbol $A_{\neq }$ denotes
the set of points in $A$ whose $d$-dimensional components are all distinct:

	\begin{equation}\label{neqdef}%
{	A_{\neq }:=A\setminus \left\{ (\mathbf{x}_{1},\ldots ,\mathbf{x}_{k})\in 
	\mathbb{R}^{kd}:\mathbf{x}_{i}=\mathbf{x}_{j}\text{ for some }1\leq i\leq
	j\leq k\right\}. } \end{equation}

Let $\{\beta _{n}\}$ be a sequence of positive numbers diverging to
infinity, which can be interpreted as a sequence of inverse temperatures,
and for each $n\in \mathbb{N}$, let $P_{n}\in \mathcal{P}(\mathbb{R}^{dn})$
be the probability measure given by 
\begin{equation}
P_{n}\left( d\mathbf{x}_{1},...,d\mathbf{x}_{n}\right) :=\frac{\exp \left(
	-\beta _{n}H_{n}\left( \mathbf{x}_{1},...,\mathbf{x}_{n}\right) \right) }{%
	Z_{n}}\ell (d\mathbf{x}_{1})\cdots \ell (d\mathbf{x}_{n}),  \label{tometro}
\end{equation}%
where $\ell ${\ is a $\sigma $-finite measure on $\mathbb{R}^{d}$ that has
	no atoms and acts as a reference measure}, and $Z_{n}$ is the normalization
constant (which is also referred to as the partition function) given by 
\begin{equation}
Z_{n}:=\int_{\mathbb{R}^{d}}\cdots \int_{\mathbb{R}^{d}}\exp \left( -\beta
_{n}H_{n}\left( \mathbf{x}_{1},...,\mathbf{x}_{n}\right) \right) \ell (d%
\mathbf{x}_{1})\cdots \ell (d\mathbf{x}_{n}).  \label{Zndef}
\end{equation}%
Measures of the form \eqref{tometro} arise in a variety of contexts. For the
case when $\ell $ is Lebesgue measure on $\mathbb{R}^{d}$, it is well known
that if $W$ and $V$ are sufficiently smooth, then $P_{n}$ is the invariant
distribution of a reversible Markov diffusion on $\mathbb{R}^{dn}$ (with
identity diffusion matrix and drift proportional to $\nabla H_{n}$), which
can be viewed as describing the dynamics of $n$ interacting Brownian
particles in $\mathbb{R}^{d}$ \cite[Chapter 5]{Gardiner2010}. On the other
hand, for particular choices of $d$, $V$ and $W$, $P_{n}$ arises as the law
of the spectrum of various random matrix ensembles, including the so-called $%
\beta $-ensemble as well as certain random normal matrices (see Section
1.5.7 of \cite{Chafai2014} for details). 

Given $P_n \in \mathcal{P}(\mathbb{R}^d)$ as in \eqref{tometro}, let $%
Q_{n}=(L_n)_{\#}P_n$ be the measure induced on $\mathcal{P}\left( \mathbb{R}%
^{d}\right)$ by pushing $P_{n}$ forward under the mapping $L_{n}:\mathbb{R}%
^{dn}\rightarrow\mathcal{P}(\mathbb{R}^{d})$ defined in \eqref{Ln} (see
Definition \ref{push} for the definition of $(L_n)_{\#}$). The aim of this
paper is to establish large deviation principles (LDPs) for sequences $%
\{Q_{n}\}$ under general conditions on $V$ and $W$ that allow $V$ and $W$ to
be not only unbounded, but also highly irregular. We apply the weak
convergence methods developed in \cite{Dupuis} to provide results for both
cases where $\beta_{n}=n$ (see Theorem \ref{nspeed}), and $%
\lim_{n\rightarrow\infty} \beta_{n}/n =\infty$ (see Theorem \ref%
{biggernspeed}).   {  For the reader unfamiliar with the 
	weak convergence approach to large deviations,
	we provide a brief outline in Appendix \ref{sec:weakconv}. }

To the best of our knowledge, the most general result in the direction of
Theorem \ref{biggernspeed} is \cite[Theorem 1.1]{Chafai2014}. The latter
seems to be the first paper to present a general approach to proving LDPs
for empirical measures generated by Gibbs distributions, when the inverse
temperatures $\beta _{n}$ diverge faster than $n$, the number of particles
(the particular case of $\beta _{n}=n^{2}$ was considered earlier in \cite%
{Arous1998}). Our theorems recover existing results (see Example \ref{pfffff}%
) and also extend prior results % \cite[Theorem 1.1]{Chafai2014}
in the following multiple directions, in particular resolving several open questions
raised in \cite{Chafai2014}:
\begin{enumerate}
	\item 
	First, whereas the result in \cite[Theorem 1.1]%
	{Chafai2014} considers only speeds $\{\beta _{n}\}$ that satisfy $%
	\lim_{n\rightarrow \infty }\beta _{n}/n{\log n}=\infty $, we allow for any
	speed diverging faster than $n$, { including the speed $n^2$ considered in \cite{Arous1998}}, thus showing that the growth rate condition
	of \cite{Chafai2014} is a technical one related to the combinatorial
	approach used in the proofs therein.  
	\item 
	Second,  { we consider more general confining potentials $V$.  In particular, 
	the assumptions imposed in \cite{Chafai2014} only allow for
	large deviation rate functions whose minimizers  have compact support.
	The minimizer of the rate function in the LDP (when unique) identifies the
	limiting equilibrium measure of the particles.} On the other hand, LDPs with{%
		\ rate functions that have minimizers that are not compactly supported }%
	arise in the context of simulated annealing algorithms that are designed to
	sample from the minimizer of the rate function (in which case the sequence $%
	\{\beta _{n}\}$ is referred to as a cooling schedule); see, e.g., \cite[%
	Section 1.5.8]{Chafai2014}. In this article, we impose weaker assumptions
	that, unlike in \cite{Chafai2014}, allow for{\ confining} potentials{\ $V$}
	that can be weak { (that is, with the  limit of the corresponding
	sequence of empirical measures being non-compactly supported)}, 
	discontinuous (see Examples \ref{Vnoinfinity} and \ref%
	{egWdisc}), unbounded, and possibly not even locally integrable. In
	particular, the potential $V$ is allowed to even be zero in a non-trivial
	unbounded domain, {provided that the volume of the domain outside a ball
		of radius $R$ around the origin decreases \textquotedblleft sufficiently fast" as $R$
		goes to infinity}. This allows us to consider cases where the particles are
	not confined in a bounded set, and in particular leads to examples with
	minimizers that do not have compact support (see Example \ref{Vnoinfinity}),
	thus addressing the open question raised in \cite[Section 1.5.1]{Chafai2014}.
{	 It appears that this has only been previously studied in the case of $\mathbb{R}^{2}$ with the logarithmic Coulomb interaction potential (see Remark \ref{coulos}). }
	\item Third, the freedom of choice for the reference measure $\ell $ allows the
	  study of Gibbs distributions defined on sets that have zero $d$-dimensional
            Lebesgue measure such as, for example, a non-smooth surface on 
	$\mathbb{R}^{3}$ or a fractal set like the Cantor dust in $\mathbb{R}^{2}.$
	A more specific example that often appears in complex potential theory is
	the case where $W$ is the Coulomb potential, and $\ell $ is Lebesgue measure
	on some 1-dimensional subset of { the complex plane} $\mathbb{C}$ such as the unit circle.
	\item  Furthermore, we establish these LDPs not 
	only with respect to the weak topology, but also with respect to a family of
	stronger topologies that include the $p$\emph{-Wasserstein topologies} for
	$p\geq 1$, thus resolving another open question raised in \cite[Section 1.5.6]%
	{Chafai2014}. { The LDPs with respect to stronger topologies are used in
		Lemma 3.4   of \cite{KimRam19} (see also \cite{Kim17})
		to show that, for a large class of Hamiltonians, 
		the sequence $\{\mathbf{x}^n\}_{n \in \mathbb{N}}$ of Gibbs configurations satisfies 
		the so-called ``asymptotic
		thin-shell condition'',  
		which is of relevance in asymptotic geometric analysis
		and high-dimensional probability.
		Specifically, this condition  stipulates that 
		the  sequence of
		scaled Euclidean norms of the random vectors
		satisfies an LDP, and was shown in \cite{KimRam19} (see
		Theorems 2.4, 2.6 and 2.8 therein) to imply that then the  corresponding sequences of
		multi-dimensional random projections of
		the random vectors also satisfy an LDP. 
		This  can be viewed as a non-universal large deviation counterpart of the universality
		result of 
		\cite{AntBalPer03} that 
		random projections of a high-dimensional measure whose
		Euclidean norms satisfy a certain
		concentration property called the ``thin shell condition'' have
		Gaussian fluctuations. 
		As first observed in \cite{GanKimRam16,GanKimRam17}, 
		LDPs are useful for describing non-universal features  lower-dimensional
		projections that encode information about the original high-dimensional measures,
		which is of relevance in numerous applications.
		Since the mapping that takes a vector to its Euclidean norm is continuous
		in the Wasserstein-$2$ topology, 
		the fact that  $\{\mathbf{x}^n\}_{n \in \mathbb{N}}$ satisfies the asymptotic
		thin shell condition can be deduced 
		from the contraction principle and the LDPs 
		(with respect to the $2$-Wasserstein topology)  for   $\{\mathbf{x}^n\}_{n \in \mathbb{N}}$
		established in this article. }
\end{enumerate}
It is also worthwhile to mention that, in
contrast to prior works, in this work the LDPs for all speeds and topologies are
established using a common methodology.

{ 
\subsection{Discussion of related recent results} 
	\label{subs-priorwork}

        This article is a substantial  generalization of
        the   first version of this article \cite{DupLasRam15v1} and  also
  resolves a  minor technical issue therein.
  It contains significant extensions, 
  the most important of which is to allow  weakly confining
  potentials $V$.   In the case
  $\lim_{n\rightarrow\infty} \beta_{n}/n =\infty$ we show that
  although the entropic term disappears
  in the limit, its appearance in the pre-limit can guarantee the validity of the LDP
  in some cases when $V$ does not satisfy $\lim_{\Vert x\Vert \rightarrow \infty}V(x)=\infty$
  (see Section \ref{snd_rep}). This result also highlights the intuitive nature
  of weak convergence methods, and more specifically the use of representations that are connected to the method, like the one in \eqref{snd_rep}.
  In addition, compared to the original version \cite{DupLasRam15v1},
  in the present article the illustrative examples
  have been significantly extended (see Section \ref{sec:2}), 
  to  include  cases where $V$ and $W$ are not continuous,
  and heuristic arguments in \cite{DupLasRam15v1}
  related to the examples 
  have been replaced here with rigorous proofs.
  Finally,  some open problems have  also been
  added, which can generate new directions for research.

  Since the first version \cite{DupLasRam15v1} appeared, several authors have extended our work or used some of the arguments. In \cite{GarciaZelada2018} an LDP for a sequence of point processes defined by Gibbs measures on a compact orientable two-dimensional Riemannian manifold is studied. In \cite{Berman2018,Garcia-Zelada2019}, the connection between LDPs and $\Gamma$-convergence that was first highlighted in \cite{Mariani2012} and subsequently implicitly exploited in our work, was furthered explored. We believe that this is a very natural connection and hypothesize that it can also lead to some new insights in the case where Assumption \ref{C}\ref{C2} is not satisfied.   More recent work
  that appeared after the present version of this article  was posted includes
    \cite{Chafai2019}, where the particular case of the Coulomb potential in dimension 
		$d=2$ is studied. 

}

\section{Assumptions and main results}

\label{subs:2}

{ 
This section is devoted to stating and discussion our main assumptions and results.
Section \ref{subs-not} introduced 
basic definitions and notation used throughout the article,  
and  Sections \ref{subs-mainres1} and \ref{subs-mainres2},
present the main results for the case $\beta_n = n$ and $\beta_n \rightarrow \infty$,
respectively. Corollaries of the main results 
and illustrative examples are presented in Section \ref{sec:2},  and
 an outline of the rest of the article is given in Section \ref{subs-out}.

        First, we start by stating the  assumptions on the potentials $V$ and $W$ that
        will hold throughout.}
        
\renewcommand\theassumption{A}

\begin{assumption}
	\label{A} The functions $W:\mathbb{R}^{d}\times\mathbb{R}^{d}\rightarrow
	(-\infty,\infty]$ and $V :\mathbb{R}^{d}\rightarrow(-\infty,+\infty]$ are
	lsc on their respective domains. There exist $1 > a \geq 0$ and $c\in\mathbb{%
		R}$ such that 
	\begin{equation}
	\begin{split}
	\int_{\mathbb{R}^{d}}\exp\left( -(1-a) V\left( \mathbf{x}\right) \right)
	\ell(d\mathbf{x})<\infty,\hspace{15pt} \inf_{\mathbf{x}\in\mathbb{R}^{d}}
	V\left( \mathbf{x}\right), \inf_{(\mathbf{x},\mathbf{y})\in\mathbb{R}%
		^{d}\times \mathbb{R}^{d}} \left[ W\left( \mathbf{x},\mathbf{y}\right)
	+a\left( V\left( \mathbf{x}\right) +V\left( \mathbf{y}\right) \right) \right]
	>c .  \label{d}
	\end{split}%
	\end{equation}
	In addition, there exists a set $A\in\mathcal{B}(\mathbb{R}^{d})$ with $\ell
	(A) > 0$ % positive $\ell$ measure,
	such that 
	\begin{equation}  \label{nontrivial}
	\int_{A\times A}(V(\mathbf{x}) +V(\mathbf{y}) + W(\mathbf{x},\mathbf{y}%
	))\ell(d\mathbf{x})\ell(d\mathbf{y})<\infty.
	\end{equation}
\end{assumption}

Assumption \ref{A} guarantees that the Gibbs distribution given in %
\eqref{tometro} is well defined. More precisely, \eqref{d} guarantees that
the measure is well defined and finite, and \eqref{nontrivial} guarantees
that the measure is not trivial.

\begin{remark}
	\label{rem-A} \emph{Under Assumption \ref{A}, without loss of generality, we
		can assume that 
		\begin{equation*}
		\int_{\mathbb{R}^{d}}\exp\left( -(1-a) V\left( \mathbf{x}\right) \right)
		\ell(d\mathbf{x})=1,
		\end{equation*}
		because any constant added to $V$ can be absorbed into $Z_{n}$; see %
		\eqref{tometro}. With some abuse of notation, we use $e^{-(1-a)V}\ell$ to
		denote the probability measure $e^{-(1-a)V(\mathbf{x})}\ell(d\mathbf{x}).$ }
\end{remark}

	\subsection{Notation and definitions}
	
	\label{subs-not}
	In this section, we provide some necessary definitions. Although some of the definitions will be given in their more general form, we would like to clarify that for our results, the underlying space $S$ would be a separable metric space without any assumptions about its completeness. 
We recall the standard definition of the push forward operator $\#$.

\begin{definition}
	\label{push} Given measurable spaces $(S,\mathcal{F})$ and $(\tilde{S} ,%
	\tilde{\mathcal{F}})$, a measurable mapping $f:S\rightarrow\tilde{S}$ and a
	measure $\mu:\mathcal{F}\rightarrow\lbrack0,\infty]$, the pushforward of $%
	\mu $ is the measure induced on $(\tilde{S},\tilde{\mathcal{F}})$ by $\mu$
	under $f$, that is, the measure $f_{\#}\mu:\tilde{\mathcal{F}}\rightarrow
	\lbrack0,\infty]$ is given by 
	\begin{equation*}
	(f_{\#}\mu)(B)=\mu\left( f^{-1}(B)\right) \mbox{ for }B\in\tilde {\mathcal{F}%
	}.
	\end{equation*}
	In other words, $f_{\#}\mu$ is the image measure of $\mu$ under $f$.
\end{definition}

We next recall the definition of a rate function on a separable metric space 
$S$.

\begin{definition}
	\label{ratefn} Given a topological space $S$, a function $\mathcal{H}%
	:S\rightarrow\lbrack0,\infty]$ is said to be a \emph{rate function} if each level set $\{x:\mathcal{H}(x)\leq
	M\},\,M\in\lbrack0,\infty)$, is compact. 
\end{definition}

Note that a function that satisfies the properties in Definition \ref{ratefn}
is sometimes referred to as a good rate function in the literature, { as a way
	to highlight the compactness of the level sets and to distinguish it from lower semi-continuous functions that can be defined by the property of having closed level sets}, but which can in some cases provide large
deviation rates of decay. When not in the context of LDPs, a function that
has the properties stated in Definition \ref{ratefn} is also called a \emph{%
	tightness function}; a term that will be used extensively in the sequel. In
contrast to much of the previous application of weak convergence methods in
large deviations, here we do not assume $S$ is complete. This will be
convenient when dealing with topologies other than the weak topology.

We now recall the definition of an LDP for a sequence of probability
measures on $(S,\mathcal{B}(S))$.

\begin{definition}
	\label{rate} $L$et $\{R_{n}\}\subset\mathcal{P}(S),$ let $\{\alpha_{n}\}$ be
	a sequence of positive real numbers such that $\lim_{n\rightarrow\infty}%
	\alpha_{n}=\infty,$ and let $\mathcal{H}:S\rightarrow\lbrack0,\infty]$ be a
	rate function. The sequence $\{R_{n}\}$ is said to satisfy a large deviation
	principle with speed $\{\alpha_{n}\}$ and rate function $\mathcal{H}$ if for
	each $E\in\mathcal{B}(S),$ 
	\begin{equation*}
	-\inf_{x\in E^{\circ}}\mathcal{H}(x)\leq\liminf_{n\rightarrow\infty}\alpha
	_{n}^{-1}\log(R_{n}(E))\leq\limsup_{n\rightarrow\infty}\alpha_{n}^{-1}%
	\log(R_{n}(E))\leq-\inf_{x\in\bar{E}}\mathcal{H}(x),
	\end{equation*}
	where $E^{\circ}$ and $\bar{E}$ denote the interior and closure of $E$,
	respectively.
\end{definition}

Let $C(\mathbb{R}^d)$ be the space of continuous functions on $\mathbb{R}^d$%
, and let $C_b (\mathbb{R}^d)$ denote the subspace of bounded functions in $%
C(\mathbb{R}^d)$. We endow $\mathcal{P}(\mathbb{R}^{d})$ with the topology
of weak convergence and use $\xrightarrow{w}$ to denote convergence with
respect to this topology; recall that $\mu_{n}\xrightarrow{w}\mu$ if and
only if for all $f \in C_{b}(\mathbb{R}^{d})$, $\int_{\mathbb{R}^{d}}f(%
\mathbf{x})\mu_{n}(d\mathbf{x})\rightarrow\int_{\mathbb{R}^{d}}f(\mathbf{x}%
)\mu(d\mathbf{x})$. 
The L\'{e}vy-Prohorov metric $d_{w}$ metrizes the weak topology on $\mathcal{%
	P}(\mathbb{R}^{d})$, and the space $(\mathcal{P}(\mathbb{R}^{d}),d_{w})$ is
Polish { (see, e.g., Theorem 5 of Appendix III of \cite{BilBook}).}
We also consider stronger topologies, parameterized by functions
belonging to the following set: 
\begin{equation}  \label{cond-psi}
\Psi := \{ \psi \in C(\mathbb{R}^d); \psi \geq 0,
\lim_{c\rightarrow\infty}\inf_{\mathbf{x}:\Vert\mathbf{x}\Vert=c} \psi(%
\mathbf{x})=\infty \}.
\end{equation}
Given $\psi \in \Psi$, let 
\begin{equation}  \label{def-ppsi}
\mathcal{P}_{\psi}( \mathbb{R}^{d}) := \left\{\mu \in\mathcal{P}(\mathbb{R}%
^{d}):\int_{\mathbb{R}^{d}}\psi\left( \mathbf{x}\right) \mu\left( d\mathbf{x}%
\right) <+\infty\right\}.
\end{equation}
We endow $\mathcal{P}_{\psi}\left( \mathbb{R}^{d}\right)$ with the metric 
\begin{equation}  \label{psidef}
d_{\psi}(\mu,\nu):= d_{w}(\mu,\nu)+\left\vert \int_{\mathbb{R}^{d}}\psi(%
\mathbf{x})\mu(d\mathbf{x})-\int_{\mathbb{R}^{d}}\psi(\mathbf{x})\nu(d%
\mathbf{x})\right\vert .
\end{equation}
The space $(\mathcal{P}_{\psi}(\mathbb{R}^{d}), d_{\psi})$ is a separable
metric space (see Lemma \ref{Polish} for a proof).

\begin{remark}({Alternative metrizations of the Wasserstein topology})
	\emph{When $\psi (\mathbf{x})=\|\mathbf{x}\|^{p}$ for $p\in\lbrack1,\infty),$ 
		with  $\mathbf{x} \in \mathbb{R}%
		^d,$ { and $\|\cdot\|$ denoting the Euclidean norm},
		$d_\psi$ induces the $p$-Wasserstein
		topology (see \cite[Remark 7.1.11]{Ambrosio2008}). Another metric that is
		commonly used to induce the $p$-Wasserstein topology on ${\mathcal{P}} (%
		\mathbb{R}^d)$ is $d_p(\mu, \nu) := \inf_{\zeta\in\Pi(\mu,\nu)} \int_{%
			\mathbb{R}^d \times \mathbb{R}^d} ||\mathbf{x}-\mathbf{y}||^p \zeta (d%
		\mathbf{x},d\mathbf{y})$, where $\Pi(\mu,\nu)$ is the set of all measures in 
		$\mathbb{R}^{2d}$ with first marginal $\mu$ and second marginal $\nu.$
		Although $\mathcal{P}_{\psi}(\mathbb{R}^{d})$ endowed with $d_p$ is complete
		and separable, we use the somewhat simpler metric $d_{\psi}$ defined for any 
		$\psi$ satisfying (\ref{cond-psi}), under which $\mathcal{P}_{\psi}(\mathbb{R%
		}^{d})$ is only separable, and not complete. { For more information on the Wasserstein distance and its topological properties, the reader is referred to \cite{Villani2003}; specifically, see Theorem 6.8 therein.} }
\end{remark}

\subsection{Results in the case $\protect\beta_n = n$}
\label{subs-mainres1}

Our first result concerns the LDP for $\{Q_{n}\}$ with speed $\alpha
_{n}=\beta _{n}=n.$ The rate function is expressed in terms of the following
functionals. Given $a\in \lbrack 0,1]$, for $\zeta \in \mathcal{P}(\mathbb{R}%
^{d}\times \mathbb{R}^{d})$, let 
\begin{equation}
\mathfrak{J}_{a}(\zeta ):=\frac{1}{2}\int_{\mathbb{R}^{d}\times \mathbb{R}%
	^{d}}\left( W(\mathbf{x},\mathbf{y})+aV(\mathbf{x})+aV(\mathbf{y})\right)
\zeta (d\mathbf{x}d\mathbf{y}).  \label{scr}
\end{equation}%
Also, for a measure $\nu \in \mathcal{P}(\mathbb{R}^{d})$, as usual we
define the relative entropy functional by 
\begin{equation*}
\mathcal{R}(\mu |\nu ):=%
\begin{cases}
\displaystyle\int_{\mathbb{R}^{d}}\frac{d\mu }{d\nu }(\mathbf{x})\log \left( 
\frac{d\mu }{d\nu }(\mathbf{x})\right) \nu (d\mathbf{x}) & \text{if}\,\mu
\ll \nu , \\ 
\infty & \displaystyle\text{otherwise,}%
\end{cases}%
\end{equation*}%
where $\mu \ll \nu $ denotes that $\mu $ is absolutely continuous with
respect to $\nu $. Then, for $\mu \in \mathcal{P}(\mathbb{R}^{d})$ let 
\begin{equation}
\mathcal{J}_{a}\left( \mu \right) :=\mathfrak{J}_{a}(\mu \otimes \mu )=\frac{%
	1}{2}\int_{\mathbb{R}^{d}\times \mathbb{R}^{d}}\left( W(\mathbf{x},\mathbf{y}%
)+aV(\mathbf{x})+aV(\mathbf{y})\right) \mu \left( d\mathbf{x}\right) \mu
\left( d\mathbf{y}\right) ,  \label{fnal-J}
\end{equation}%
and 
\begin{equation}
\mathcal{I}\left( \mu \right) :=\mathcal{R}\left( \mu |e^{-(1-a)V}\ell
\right) +\mathcal{J}_{a}(\mu ),  \label{def:I}
\end{equation}%
Note that the lack of subscript on $\mathcal{I}$ in \eqref{def:I} is
justified because, as the following easily verifiable relation shows, $%
\mathcal{I}$ does not depend on the constant $a$: 
\begin{equation*}
\mathcal{I}(\mu )=\mathcal{S}(\mu )+\mathcal{V}(\mu )+\mathcal{W}(\mu ),
\end{equation*}%
where $\mathcal{V},\mathcal{S},\mathcal{W}:\mathcal{P}(\mathbb{R}%
^{d})\rightarrow (-\infty ,\infty ],$ are given by 
\begin{equation}
\mathcal{V}\left( \mu \right) :=\int_{\mathbb{R}^{d}}V\left( \mathbf{x}%
\right) \mu \left( d\mathbf{x}\right) ,\hspace{16pt}\mathcal{S}\left( \mu
\right) :=\int_{\mathbb{R}^{d}}\frac{d\mu }{d\ell }(\mathbf{x})\log \left( 
\frac{d\mu }{d\ell }(\mathbf{x})\right) \ell \left( d\mathbf{x}\right) ,
\label{Vdef}
\end{equation}%
and 
\begin{equation}
\mathcal{W}(\mu ):=\frac{1}{2}\int_{\mathbb{R}^{d}\times \mathbb{R}^{d}}W(%
\mathbf{x},\mathbf{y})\mu (d\mathbf{x})\mu (d\mathbf{y}).  \label{Wdef}
\end{equation}

To establish the LDP with respect to stronger topologies $d_{\psi }$, $\psi
\in \Psi $, we will need an additional condition, which we now state. Let $%
\Phi $ be the class of functions defined by 
\begin{equation}
\Phi :=\left\{ \phi :\mathbb{R}_{+}\mapsto \mathbb{R};\phi 
\mbox{ is lsc and
}\lim_{s\rightarrow \infty }\frac{\phi (s)}{s}=\infty \right\} .
\label{def-Phi}
\end{equation}

\renewcommand\theassumption{B}

\begin{assumption}
	\label{B} There exists a lsc function $\gamma:\mathbb{R}^{d}\rightarrow%
	\mathbb{R},$ of the form $\gamma(\mathbf{x})=\phi\left( \psi\left( \mathbf{x}%
	\right) \right),$ for some $\phi \in \Phi$, such that for the constant $a$
	in Assumption \ref{A}, and every $\mu\in\mathcal{P}(\mathbb{R}^{d}),$ we
	have 
	\begin{equation}  \label{m}
	\int_{\mathbb{R}^{d}}\gamma\left( \mathbf{x} \right) \mu\left( d\mathbf{x}%
	\right) \leq\inf_{\zeta\in\Pi(\mu,\mu)}\left\{ \mathfrak{J}_{a}\left(
	\zeta\right) +\mathcal{R}\left( \zeta|e^{-(1-a)V}\ell\otimes
	e^{-(1-a)V}\ell\right) \right\} .
	\end{equation}
\end{assumption}

The following lemma provides a more easily verifiable sufficient condition
under which Assumption \ref{B} holds; its proof is deferred to Appendix \ref%
{sec-apA}. Recall the set $\Psi$ defined in \eqref{cond-psi}.

\begin{lemma}
	\label{phi3} %\label{corol}
	Let $V$ and $W$ satisfy Assumptions \ref{A}, and let $\psi \in \Psi $ 
	%       :\mathbb{R}^{d}\rightarrow\mathbb{R}_{+}$ be a measurable function that
	satisfy 
	\begin{equation}
	\int_{\mathbb{R}^{d}\times \mathbb{R}^{d}}e^{\lambda \left( \psi \left( 
		\mathbf{x}\right) +\psi \left( \mathbf{y}\right) \right) }e^{-\left( V\left( 
		\mathbf{x}\right) +V\left( \mathbf{y}\right) +W\left( \mathbf{x},\mathbf{y}%
		\right) \right) }d\mathbf{x}d\mathbf{y}<\infty  \label{alalal}
	\end{equation}%
	for all $\lambda \in \mathbb{R}$. Then Assumption \ref{B} is satisfied for
	that $\psi $ and some $\phi \in \Phi $.
\end{lemma}

We now state our first main result, whose proof is given in Section \ref%
{sec:4}.

\begin{theorem}
	\label{nspeed} Let $V$ and $W$ satisfy Assumption \ref{A}, and for $n \in 
	\mathbb{N}$, let $\beta_{n}=n$, let $P_{n}$ be defined as in \eqref{tometro}
	and let $Q_{n}=(L_n)_{\#}P_n.$ Then $\{Q_{n}\}$ satisfies an LDP on $(%
	\mathcal{P} \left( \mathbb{R}^{d}\right), d_w)$ with speed $%
	\alpha_{n}=\beta_{n}=n$ and rate function 
	\begin{equation}  \label{def:Istar}
	\mathcal{I}_{\star}\left( \mu\right) :=\mathcal{I}\left( \mu\right)
	-\inf_{\mu\in\mathcal{P}\left( \mathbb{R}^{d}\right) }\{\mathcal{I}\left(
	\mu\right) \},
	\end{equation}
	where $\mathcal{I}$ is defined by \eqref{def:I}. Furthermore, if there
	exists $\psi \in \Psi$ for which Assumption \ref{B} holds, then $\{Q_{n}\}$
	satisfies an LDP on $(\mathcal{P}_{\psi}(\mathbb{R}^{d}), d_{\psi})$ with
	rate function%
	\begin{equation}  \label{def:Istarpsi}
	\mathcal{I}_{\star}^\psi \left( \mu\right) :=\mathcal{I}\left( \mu\right)
	-\inf_{\mu\in\mathcal{P}_{\psi}\left( \mathbb{R}^{d}\right) }\{\mathcal{I}%
	\left( \mu\right) \}.
	\end{equation}
\end{theorem}

In the case when $W\equiv 0$, Theorem \ref{nspeed} recovers the well known
Sanov's Theorem (see \cite[Theorem 6.2.10]{Dembo1998} or \cite[Theorem 2.2.1]%
{Dupuis} for the LDP with respect to the weak topology and \cite[Theorem 1.1]%
{Wang2010} for the LDP with respect to the $p$-Wasserstein topology). In 
\cite[Theorem 1.1]{Wang2010}{\ the authors prove that for any $p\in \lbrack
	1,\infty )$ and $\psi (x)=||x||^{p}$, if 
	% condition (1.3) of \cite{Wang2010} is satisfied or equivalently, if 
	the following condition holds:} 
\begin{equation}
\int_{\mathbb{R}^{d}\times \mathbb{R}^{d}}e^{\lambda \psi \left( \mathbf{x}%
	\right) -V\left( \mathbf{x}\right) }\ell (d\mathbf{x})<+\infty \hspace{8pt}%
\forall \lambda >0,  \label{ellaplo}
\end{equation}%
then $\{Q_{n}\}$ satisfies an LDP in $\mathcal{P}_{\psi }(\mathbb{R}^{d})$
with rate function $\mathcal{R}(\mu |e^{-V}\ell )$, which corresponds to the
case $a=0$ in our setting. The bound (\ref{alalal}) implies Assumption \ref%
{B}, and can be viewed as a generalization of condition (1.3) of \cite%
{Wang2010} ({which was shown in \cite{Wang2010} to be equivalent to %
	\eqref{ellaplo}}) to the case when $W\neq 0$.

Next, if $W$ is continuous and satisfies certain growth conditions on $%
\mathbb{R}^{d}\times\mathbb{R}^{d},$ then the result can be obtained from
the $W=0$ (or Sanov's theorem) case by a simple application of Varadhan's
lemma (see \cite[Theorem 4.3.1]{Dembo1998} or \cite[Theorem 1.2.1]{Dupuis}).
To the best of our knowledge, there are no general results in the literature
that cover the case when $W$ is both unbounded and discontinuous, and
therefore Theorem \ref{nspeed} is the first in that direction. Furthermore,
Assumption \ref{B} % which can be viewed as a
%  generalization of condition  (1.3) in \cite{Wang2010},
provides a sufficient condition for the LDP to hold with respect to a rather
large class of stronger topologies, which was useful for the verification of
the asymptotic thin shell condition in \cite{Kim17,KimRam19}.

\subsection{Results in the case $\protect\beta_n/n \rightarrow \infty$}
\label{subs-mainres2}

Motivated by questions arising in random matrix theory, sampling and
simulated annealing, several authors \cite%
{Arous1997a,Arous1998,Chafai2014,Hardy2012,Petz1998,Serfaty2015} have
considered LDPs for $\{Q_{n}\}$ at specific speeds that are faster than $n$,
such as $\beta_n/n\log n \rightarrow \infty$ and $\beta_n = n^2$. Our second
theorem presents a general result for speeds faster than $n$, that is, when $%
\beta_n/n \rightarrow \infty$, under Assumption \ref{A} and certain modified
assumptions on $V$ and $W$ stated in Assumption \ref{C} below. In what
follows, we use $\mathcal{J}$ to denote the functional ${\mathcal{J}}_1:%
\mathcal{P}(\mathbb{R}^{d})\rightarrow (-\infty,\infty]$ defined in %
\eqref{fnal-J}, with $a = 1$. Recall the set $\Phi$ defined in %
\eqref{def-Phi}, for a set $A \subset \mathbb{R}^d$, { $A_{\neq}$ is defined in \eqref{neqdef}},  and as usual let 
%  $A^c$ denote its complement and
$I_A$ denote the indicator function, which assigns $1$ to points in $A$ and $%
0$ otherwise. % \begin{equation}
% \mathcal{J}\left(  \mu\right)\equiv\mathcal{J}_{1}\left(  \mu\right):=\frac{1}{2}\int_{\mathbb{R}^{d}\times
% \mathbb{R}^{d}}\left(   V\left(  \mathbf{x}\right)  +V\left(
% \mathbf{y}\right)    +W\left(  \mathbf{x},\mathbf{y}\right)  \right)
% \mu\left(  d\mathbf{x}\right)  \mu\left(  d\mathbf{y}\right) %
% \label{eqn:2ndI}.%
%\end{equation}

\renewcommand\theassumption{C}

\begin{assumption}
	\label{C}
	
	\begin{enumerate}
		\item \label{C1} There exist a lsc function $\gamma :\mathbb{R}%
		^{d}\rightarrow \mathbb{R}$ of the form $\gamma (\mathbf{x})=\phi (\Vert 
		\mathbf{x}\Vert )$ for some $\phi :\mathbb{R}_{+}\mapsto \mathbb{R}$ with $%
		\lim_{s\rightarrow \infty }\phi (s)=\infty $, a set $A\in {\mathcal{B}}(%
		\mathbb{R}^{d}),$ a sequence $\{r_{n}\}\subset (0,\infty )$ with $%
		r_{n}\rightarrow \infty $ and a constant $C\in \mathbb{R},$ such that 
		\begin{equation}
		\gamma (\mathbf{x})I_{A^{c}}(\mathbf{x})+\gamma (\mathbf{y})I_{A^{c}}(%
		\mathbf{y})\leq V\left( \mathbf{x}\right) +V\left( \mathbf{y}\right)
		+W\left( \mathbf{x},\mathbf{y}\right) +C,  \label{C11}
		\end{equation}%
		for every $n\in \mathbb{N}$ and $\mathbf{x}^{n}\in (\mathbb{R}^{nd})_{\neq }$
		\begin{equation}
		\int_{A_{n}^{1}}\gamma (\mathbf{x})L_{n}\left( \mathbf{x}^{n};d\mathbf{x}%
		\right) \leq \int_{(A_{n}^{1}\times A_{n}^{1})_{\neq }}\left( V\left( 
		\mathbf{x}\right) +V\left( \mathbf{y}\right) +W\left( \mathbf{x},\mathbf{y}%
		\right) \right) L_{n}\left( \mathbf{x}^{n};d\mathbf{x}\right) L_{n}\left( 
		\mathbf{x}^{n};d\mathbf{y}\right) +C,  \label{C12}
		\end{equation}%
		and 
		\begin{equation}
		\sup_{n\in \mathbb{N}}\frac{n}{\beta _{n}}\log \left( \int_{A_{n}^{2}}e^{%
			\frac{\beta _{n}}{n}\gamma (\mathbf{x})-(1-a)V\left( \mathbf{x}\right) }\ell
		\left( d\mathbf{x}\right) \right) <C,  \label{C13}
		\end{equation}%
		where $A_{n}^{1}:=A\cap B\left( 0,r_{n}\right) $ and $A_{n}^{2}:=A\setminus
		B\left( 0,r_{n}\right) .$
		
		\item \label{C2} For each $\mu\in\mathcal{P}\left( \mathbb{R}^{d}\right) $
		such that $\mathcal{J}\left( \mu\right) <+\infty,$ there is a sequence $%
		\{\mu_{n}\}\subset {\mathcal{P}}(\mathbb{R}^d)$, with each $\mu_n \ll \ell$ 
		% absolutely continuous with respect to the measure $\ell$, such
		such that $\mu_{n} \overset{w}{\rightarrow} \mu$, and $\mathcal{J}\left(
		\mu_{n}\right) \rightarrow\mathcal{J}\left( \mu\right) $ as $n \rightarrow
		\infty$.
		
		\item \label{C3} The function $\gamma$ in part 1. above is of the form $%
		\gamma(\mathbf{x})=\phi(\psi(\mathbf{x})),$ for some $\phi \in \Phi$. 
		%  where $\phi:\mathbb{R}_{+}\rightarrow\mathbb{R}$ with \[ \lim_{s\rightarrow+\infty}\frac{\phi\left(  s\right)}{s}  =+\infty.\] 
	\end{enumerate}
\end{assumption}

When $A=\emptyset,$ then Assumption \ref{C}\ref{C1} collapses to the
following more easily verifiable condition:

\renewcommand\theassumption{C'1}

\begin{assumption}
	\label{C'1} There exists a lsc function $\gamma:\mathbb{R}^{d}\rightarrow%
	\mathbb{R}$ of the form $\gamma(\mathbf{x})=\phi(\|\mathbf{x}\|),$ where $%
	\phi:\mathbb{R}_{+}\rightarrow\mathbb{R}$ satisfies $\lim_{s
		\rightarrow+\infty}\phi\left(s\right) =+\infty,$ such that 
	\begin{equation}
	V\left( \mathbf{x}\right) +V\left( \mathbf{y}\right) +W\left( \mathbf{x},%
	\mathbf{y}\right)\geq \gamma(\mathbf{x})+ \gamma(\mathbf{y}).
	\end{equation}
\end{assumption}

Assumption \ref{C'1} covers all of the well known examples in the literature
and the majority of generalizations that we provide in this paper. The main
reason we introduce the more complicated Assumption \ref{C}\ref{C1}, { is that
	we want to include cases involving higher dimensional spaces ($d\geq 3$),
	where the confining potential $V$ does not necessarily satisfy
	$\lim_{n \rightarrow \infty} V(x_n) = \infty$ when
	$\left\Vert x_{n}\right\Vert \rightarrow \infty $, an assumption that 
	is used in both \cite{Chafai2014} and \cite{Serfaty2015},
	which  cover cases
	where the rate function has minimizers with compact support. 
	For  special cases in two dimensions where minimizers do not have compact
	support, see \cite{Hardy2012} and Remark \ref{coulos}}.

% HERE \footnote{KR: Again, clarify here why we may want to consider more general minimizers}  We present an example with 
% non-compact  minimizers in  Example
% \ref{vnoinfinity}, which  also better elucidates the role played by
% the various ingredients of condition Assumption \ref{C}\ref{C1}.  
Assumption \ref{C}\ref{C2} is used in Section \ref{Upn2} to establish the
Laplace principle upper bound. For examples of pairs $(V,W)$ that satisfy
Assumption \ref{C}\ref{C2}, the reader is directed to \cite[Proposition 2.8]%
{Chafai2014}. Finally, Assumption \ref{C}\ref{C3} is used to obtain the
result for the stronger topologies. 
% Before we proceed, we would like to accentuate the following.

\begin{remark}({On the entropic term in the case  $\beta _{n}/n\rightarrow \infty .$}  )
	\label{rem-relent} \emph{\ It is worthwhile to emphasize that, in the case
		where $\beta _{n}/n\rightarrow \infty ,$ although the entropic term
		disappears from the rate function and is ignored in almost all other proofs
		we found in the literature, in our proof approach (see in particular, Lemma %
		\ref{controlingcontrols4}) the relative entropy functional still plays an
		important role in the intermediate steps. In particular, it helps us
		establish tightness and prove the LDP under weaker conditions on $V$ than
		those assumed in the literature. }
\end{remark}

We now state our second main result, whose proof is deferred to Section \ref%
{sec:5}.

\begin{theorem}
	\label{biggernspeed} Consider a sequence $\{\beta_{n}\}$ such that $%
	\lim_{n\rightarrow\infty}\beta _{n}/n=\infty,$ and let $V$ and $W$ satisfy
	Assumptions \ref{A}, \ref{C}\ref{C1} and \ref{C}\ref{C2}. For $n\in\mathbb{N}%
	,$ let $P_{n}$ be as in \eqref{tometro} and $Q_{n}=(L_n)_{\#}P_n.$ Then $%
	\{Q_{n}\}$ satisfies an LDP on $\mathcal{P}(\mathbb{R}^{d})$ with speed $%
	\alpha_{n}=\beta_{n}$ and rate function%
	\begin{equation}  \label{def:Jstar}
	\mathcal{J}_{\star}\left( \mu\right) =\mathcal{J}\left( \mu\right)
	-\inf_{\mu\in\mathcal{P}(\mathbb{R}^{d})}\{\mathcal{J}\left( \mu\right) \},
	\end{equation}
	where $\mathcal{J}$ is the functional $\mathcal{J}_1$ given in \eqref{fnal-J}%
	. Furthermore, if there exists $\psi \in \Psi$ for which Assumption $\ref{C}%
	\ref{C3}$ holds, then $\{Q_{n}\}$ satisfies an LDP on $(\mathcal{P}_{\psi}(%
	\mathbb{R}^{d}), d_{\psi})$ % with the stronger topology $d_{\psi},$ and
	with the rate function 
	\begin{equation}  \label{def:Jstarpsi}
	\mathcal{J}_{\star}^\psi \left( \mu\right) := \mathcal{J}\left( \mu\right)
	-\inf_{\mu\in\mathcal{P}_{\psi}\left( \mathbb{R}^{d}\right) }\{\mathcal{J}%
	\left( \mu\right) \}.
	\end{equation}
\end{theorem}

A direct consequence of Theorems \ref{nspeed} and \ref{biggernspeed} is the
following.

\begin{remark}({ LDPs for empirical moments})  
	\emph{Suppose $V$ and $W$ satisfy Assumption \ref{A}, and Assumption \ref{B}
		holds with $\psi (\mathbf{x}) := ||\mathbf{x}||^p$ for some $p \geq 1$. Let $%
		(X_1^n, \ldots, X_n^n)$ be distributed according to $P^n$ and for any $q
		\leq p$, let $Y^{n}_q := \frac{1}{n}\sum_{i=1}^{n}|X^{n}_{i}|^{q},$ $n \in 
		\mathbb{N}$. Then Theorem \ref{nspeed}, the continuity of the map $\mu
		\mapsto \int ||\mathbf{x}||^q \mu(d\mathbf{x})$ in the Wasserstein-$p$
		topology and the contraction principle \cite[Theorem 4.2.1]{Dembo1998}
		together show that $\{Y_q^n\}$ satisfies an LDP with speed $\beta_{n} = n$
		and rate function 
		\begin{equation*}
		H({y}) := \inf_{\mu \in {\mathcal{P}}(\mathbb{R}^d)}\left\{\mathcal{I}%
		_{\star}^\psi(\mu):{y}=\int_{\mathbb{R}^{d}}||\mathbf{x}||^{q}\mu(dx)\right%
		\}.
		\end{equation*}
		Likewise, if $V$ and $W$ satisfy Assumptions \ref{A}, \ref{C}\ref{C1} and %
		\ref{C}\ref{C2}, and Assumption \ref{C}\ref{C3} holds with $\psi (\mathbf{x}%
		) := ||\mathbf{x}||^p$ and $\beta_n/n \rightarrow \infty$ as $n \rightarrow
		\infty$, then Theorem \ref{biggernspeed} shows that $\{Y_q^n\}$ satisfies an
		LDP with speed $\beta_{n}$ and rate function $\tilde{H}({y})=\inf_{\mu \in {%
				\mathcal{P}} (\mathbb{R}^d)}\{\mathcal{J}_{\star}^\psi(\mu):{y}=\int_{%
			\mathbb{R}^{d}}||\mathbf{x}||^{q}\mu(dx)\}.$ }
\end{remark}

\subsection{Corollaries of the main results}
\label{sec:2}

In this section, we provide several illustrative examples for which the
assumptions of Theorems \ref{nspeed} and \ref{biggernspeed} can be verified.

\subsubsection{Known examples covered by our assumptions}

We start by considering potentials that have already been investigated in
the literature for the weak topology, and showing that they satisfy our
assumptions. In what follows, let $K_{\Delta}:\mathbb{R}^{d}\mapsto\mathbb{R}
$ be the Coulomb potential given by $K_{\Delta}(x)=-|x|$ when $d=1$, $%
K_{\Delta }(x)=-\log(||x||)$ when $d=2$ and $K_{\Delta}(x)=1/||x||^{d-2}$
when $d>2$.

\begin{example}
	\label{pfffff} Let $\ell$ be Lebesgue measure. The pair $(V,W)$ given by $V(%
	\mathbf{x})=||\mathbf{x}||^{p}$ for some $p>1$ and $W(\mathbf{x},\mathbf{y}%
	)=K_{\Delta}(\mathbf{x}-\mathbf{y})$ satisfies Assumptions \ref{A}, \ref{C'1}
	and \ref{C}\ref{C3}, and Assumptions \ref{B} and \ref{C}\ref{C2} also hold
	with $\psi(\mathbf{x})=||\mathbf{x}||^{q}$, $q<p$.
\end{example}

\begin{proof}[Proof of Example \ref{pfffff}.] 
	For $d\geq 3,$ it is trivial to verify that Assumption \ref{A} is satisfied with $a = 0$. 
	For the case $d=2,$ we pick $a=\frac{1}{2},$ and 
	observe that $\frac{1}{4}\Vert\mathbf{x}\Vert^{p}+\frac{1}{4}%
	\Vert\mathbf{y}\Vert^{p}-\log\Vert\mathbf{x}-\mathbf{y}\Vert$ is bounded from
	below by a constant $c,$ since $z \mapsto -\log z$ is convex and $\lim_{s\rightarrow
		\infty}(s^{p}-\log s)=\infty.$ For the case $d=1,$ we observe that $K_{\Delta}(x)$ is continuous and also $\frac{1}{4}|x|^{p}+\frac{1}{4}%
	|y|^{p}-|x-y|\geq\frac{1}{4}|x|^{p}+\frac{1}{4}%
	|y|^{p}-|x|-|y|$ is bounded from
	below by a constant $c,$ since $\lim_{s\rightarrow\infty}(\frac{1}{4}s^{p}- s)=\infty.$  We verify Assumption \ref{C'1} by picking
	$\phi(s)=\frac{1}{4}s^{p}+\bar{C},$ where $\bar{C}$ is a suitable constant. Finally, it is
	also easy to see that the pair $(V,W)$ satisfies Assumptions \ref{B}
	and \ref{C}\ref{C3} with $\psi(\mathbf{x})=||\mathbf{x}||^{q},q<p,$ by
	applying Lemma \ref{phi3} for the first case and by picking $\phi(s)=\frac
	{1}{4}s^{{p}/{q}}+\bar{C},$ where $\bar{C}$ is a suitable constant, for the second. 
	Verification of Assumption \ref{C}\ref{C2} is a direct application of point (3) in \cite[Proposition 2.8]{Chafai2014}.
\end{proof}

Example \ref{pfffff} shows, in particular, that our assumptions are
satisfied in the cases covered in \cite{Chafai2014}, including the popular
case studied in \cite{Chafai2014,Arous1998,Hardy2012,Petz1998}, of $V(%
\mathbf{x})=\Vert\mathbf{x}\Vert^{2}$, $W(\mathbf{x},\mathbf{y})=-\log(%
\mathbf{x}-\mathbf{y})$ and $\ell$ being Lebesgue measure.

\subsubsection{Non-diverging, weakly confining potentials}\label{weakweak}

Given Assumption \ref{A}, Assumption \ref{C'1} is, at least seemingly, weaker than the condition $\lim_{\Vert 
	\mathbf{x}\Vert \rightarrow \infty }V\left( \mathbf{x}\right) =+\infty $
imposed in \cite{Chafai2014}. This can be directly seen if one takes $\phi
(t)=\frac{(1-a)}{2}\inf_{\Vert \mathbf{x}\Vert =t}V(\mathbf{x})+C^{\prime },$
where $C^{\prime }$ is chosen accordingly. However, the two assumptions have
similar origins. More specifically, since $W,V$ generate $H_{n}$ by a linear
combination, one can transfer the confining attributes of $V$ to $W$ by
taking for example any $V\geq 0$ such that $e^{-V}\ell$ is a probability measure and $W(\mathbf{x},\mathbf{y})=\Vert \mathbf{x}%
\Vert ^{2}+\Vert \mathbf{y}\Vert ^{2}$. The important point is
that for these cases, $W(\mathbf{x},\mathbf{y})+V(\mathbf{x})+V(%
\mathbf{y})$ is penalizing large
values of $\Vert \mathbf{x}\Vert $ and $\Vert \mathbf{y}\Vert .$

In \cite[p. 38]{Serfaty2015} there is an intuitive explanation for how this
containment can be applied for the case where $V$ is superlinear and $W$ is the
Coulomb potential in order to prove the LDP. The proof is based on the idea that in a closed subset of 
$\mathcal{P}(\mathbb{R}^{d}),$ the minimizers of  the rate
function $\mathcal{J}$ can be approximated by minimizers of $H_{n}$ when $n$
is sufficiently large. The approximation property is a consequence of the $%
\Gamma -$convergence of $H_{n}$ to $\mathcal{J}$ (see \cite{Braides2002a})
together with the \textquotedblleft coercivity" of $H_{n}$ (i.e., $H_{n}$
has minimizers in every closed set). To establish these
properties of $H_{n}$, the confining properties of $V$ play a crucial role.

We now provide the following example.

\begin{example}
	\label{Vnoinfinity} Let $q>1,d=3,$ $W\left( \mathbf{x},\mathbf{y}\right)
	=2/\Vert \mathbf{x}-\mathbf{y}\Vert ,$ $\ell $ be Lebesgue measure and let $%
	V $ be given by 
	\begin{equation*}
	V\left( \mathbf{x}\right) =%
	\begin{cases}
	0 & \mathbf{x}\in B_{\mathbf{z}}:=B(\mathbf{z},a_{\mathbf{z}})=\{\mathbf{x}%
	\in \mathbb{R}^{3}:||\mathbf{x}-\mathbf{z}||\leq a_{\mathbf{z}}\}%
	\mbox{ for
		some }\mathbf{z}\in \mathbb{Z}^{3}, \\ 
	\Vert \mathbf{x}\Vert ^{q} & \text{otherwise},%
	\end{cases}%
	\end{equation*}%
	where the constants $\{a_{\mathbf{z}}\}$ satisfy $a_{\mathbf{z}}\leq
	e^{-3\Vert \mathbf{z}\Vert ^{2q+3}},$ $\mathbf{z}\in \mathbb{Z}^{3}$. Then
	for $\beta _{n}=n^{2},$ Assumption \ref{C}\ref{C1} and Assumption \ref{C}\ref%
	{C3} hold with $A=\cup _{\mathbf{z}\in \mathbb{Z}^{3}}B(\mathbf{z},a_{%
		\mathbf{z}})$, $r_{n}=\sqrt[q+3]{n},$ and $\phi (x)=\frac{1}{2}x^{q}.$
	Finally, no minimizer of the rate function $\mathcal{J}_{\ast }$ has compact
	support.
\end{example}

\begin{remark}
	\emph{For this example there are non-trivial regions of }$\mathbb{R}^{2d}$\emph{%
		\ extending infinity on which }$W(x,y)+V(x)+V(y)$\emph{\ is close to zero,
		so there is not an obvious confining property. Also, since }$J$\emph{\ is a
		rate function it exhibits minimizers on every closed ball }$\bar{\mathcal{B}}
	$\emph{\ of }$P(R^{d})$\emph{. In contrast, }$H_{n}$\emph{\ not only does
		not have minimizers, but also }%
	\begin{equation}
	\sup_{n\in \mathbf{N}}\inf_{\{\mathbf{x}^{n}\in (\mathbb{R}^{nd})_{\neq
		}:L_{n}(\mathbf{x}^{n};\cdot )\in \,\bar{\mathcal{B}}\}}H_{n}<\inf_{\mu \in 
		\bar{\mathcal{B}}}\mathcal{J}.  \label{notenough}
	\end{equation}%
	\emph{However, for this example the LDP holds for example in the case }$%
	\beta _{n}=n^{2}.$
\end{remark}

\begin{remark}
	\emph{We can modify Example \ref{Vnoinfinity} slightly to take $V$ to be
		continuous, for example, if we set $V$ equal to zero only in $B(\mathbf{z}%
		,a_{\mathbf{z}}/2)$, equal to $\Vert x\Vert ^{q}$ outside $\cup _{\mathbf{z}%
			\in \mathbb{Z}^{3}}B(\mathbf{z},a_{\mathbf{z}}),$ and use Tietze's extension
		theorem to extend it to a continuous function. For such a continuous $V$, it
		is trivial to establish Assumption \ref{C}\ref{C2} by applying the results
		in \cite[Proposition 2.8]{Chafai2014}. It will be clear from the proof of
		Example \ref{Vnoinfinity} below that Assumptions \ref{C}\ref{C1} and \ref{C}%
		\ref{C3} also continue to hold with this modification since $V$ remains
		positive. Assumption \ref{C}\ref{C2} can be also proved directly for our
		initial example (with discontinuous $V$) in a straightforward manner. We
		omit the proof. }
\end{remark}

\begin{proof}[Proof of Example \ref{Vnoinfinity}]
	We start by verifying Assumption
	\ref{C}\ref{C1}, namely inequalities \eqref{C11}--\eqref{C13}. Let
	$\bar{\gamma}(x):=\frac{1}{2}||x||^{q}$. It follows immediately from the
	definitions that \eqref{C11} holds with $C=0$ and $\gamma=C^{\prime}%
	\bar{\gamma}$ for any $C^{\prime}\leq1$. Next, let $n\in\mathbb{N},$ and let
	$A_{1}^{n}:=A\cap B(0,r_{n})$, $A_{2}^{n}=A\setminus B(0,r_{n})$ be as in
	Assumption \ref{C}\ref{C1}. We split the set $\mathbb{Z}^{3}\cap A_{1}^{n},$
	into two set $Z_{1}^{n}$ and $Z_{2}^{n}:$ Let $Z_{1}^{n}$ contain the cubic
	integers $\mathbb{Z}^{3}\cap A_{1}^{n}$ for which $L_{n}(B_{\mathbf{z}}\cap
	A_{1}^{n})\leq\frac{1}{n}$ and let $Z_{2}^{n}$ contain the cubic integers for
	which $L_{n}(B_{\mathbf{z}}\cap A_{1}^{n})\geq\frac{2}{n}.$  The idea is to choose the cardinality $|Z_{1}^{n}|$ sufficiently
	small such that particles in the corresponding sets do not contribute much to
	the interaction energy, and at the same time, choose the balls around
	$Z_{2}^{n}$ to be so small that the pairs of particles that are in one of
	these sets create a significant interaction energy. We now verify \eqref{C12}.
	Fix $\mathbf{x}^{n}\in(\mathbb{R}^{dn})_{\neq}$ and denote $L_{n}%
	:=L_{n}(\mathbf{x}^{n},\cdot)$. For $\mathbf{z}\in Z_{2}^{n},$ since
	$W(\mathbf{x},\mathbf{y})=2/||\mathbf{x}-\mathbf{y}||\geq1/a_{\mathbf{z}}$
	when $\mathbf{x},\mathbf{y}\in B_{\mathbf{z}}$,  we have
	\[
	\int_{\left(  B_{\mathbf{z}}\cap A_{1}^{n}\right)  \times\left(
		B_{\mathbf{z}}\cap A_{1}^{n}\right)  _{\neq}}W\left(  \mathbf{x}%
	,\mathbf{y}\right)  L_{n}\left(  d\mathbf{x}\right)  L_{n}\left(
	d\mathbf{y}\right)  \geq\frac{2}{a_{\mathbf{z}}}\frac{1}{2}L_{n}%
	^{2}(B_{\mathbf{z}}\cap A_{1}^{n})=\frac{L_{n}^{2}(B_{\mathbf{z}}\cap
		A_{1}^{n})}{a_{\mathbf{z}}}.
	\]
	Combining this with the nonnegativity of $W$, the Cauchy-Schwarz inequality (in
	the third inequality below), the summability of $||\mathbf{z}||^{2q}%
	a_{\mathbf{z}},z\in\mathbb{Z}^{3}$ (in the fourth inequality), and using
	$C^{\prime\prime}>0$ to denote a constant that may change from line to line,
	we obtain
	\begin{align*}
	\int_{(A_{1}^{n}\times A_{1}^{n})_{\neq}}W\left(  \mathbf{x},\mathbf{y}%
	\right)  L_{n}\left(  d\mathbf{x}\right)  L_{n}\left(  d\mathbf{y}\right)    &
	\geq\sum_{\mathbf{z}\in Z_{2}^{n}}\int_{\left(  B_{\mathbf{z}}\cap A_{1}%
		^{n}\right)  \times\left(  B_{\mathbf{z}}\cap A_{1}^{n}\right)  _{\neq}%
	}W\left(  \mathbf{x},\mathbf{y}\right)  L_{n}\left(  d\mathbf{x}\right)
	L_{n}\left(  d\mathbf{y}\right)  \\
	& \geq\sum_{\mathbf{z}\in Z_{2}^{n}}\frac{L_{n}^{2}(B_{\mathbf{z}}\cap
		A_{1}^{n})}{a_{\mathbf{z}}}\\
	& \geq\frac{\left(  \sum_{\mathbf{z}\in Z_{2}^{n}}L_{n}(B_{\mathbf{z}}\cap
		A_{1}^{n})\Vert\mathbf{z}\Vert^{q}\right)  ^{2}}{\sum_{\mathbf{z}\in Z_{2}%
			^{n}}\Vert\mathbf{z}\Vert^{2q}a_{\mathbf{z}}}\\
	& \geq C^{\prime\prime}\left(  \sum_{\mathbf{z}\in Z_{2}^{n}}L_{n}%
	(B_{\mathbf{z}}\cap A_{1}^{n})\Vert\mathbf{z}\Vert^{q}\right)  ^{2}\\
	& \geq C^{\prime\prime}\left(  \int_{  \bigcup\limits_{\mathbf{z}\in
			Z_{2}^{n}}B_{\mathbf{z}}\cap A_{1}^{n}  }\Vert\mathbf{x}\Vert^{q}%
	L_{n}(d\mathbf{x})\right)  ^{2}.
	\end{align*}
	Expanding the last term and using the relations $s^{2}\geq s-1$, $|Z_{1}%
	^{n}|\leq r_{n}^{3}$, and $r_{n}=\sqrt[q+3]{n},$ we see that the left-hand
	side above is bounded below by
	\begin{align*}
	C^{\prime\prime}\left(  \int_{  \bigcup\limits_{\mathbf{z}\in
			Z_{2}^{n}}B_{\mathbf{z}}\cap A_{1}^{n}  }\Vert\mathbf{x}\Vert^{q}%
	L_{n}(d\mathbf{x})+\int_{\bigcup\limits_{\mathbf{z}\in Z_{1}^{n}%
		}B_{\mathbf{z}}\cap A_{1}^{n}  }\Vert\mathbf{x}\Vert^{q}L_{n}%
	(d\mathbf{x})-|Z_{1}^{n}|\frac{r_{n}^{q}}{n}\right)  ^{2} 
	% & \\
	& \geq 
	C^{\prime\prime}\left(  \int_{A_{1}^{n}}\Vert\mathbf{x}\Vert
	^{q}L_{n}(d\mathbf{x})-\frac{r_{n}^{q+3}}{n}-1\right) \\
	& =C^{\prime\prime}\int_{A_{1}^{n}}\bar{\gamma}(\mathbf{x}%
	)L_{n}(d\mathbf{x})-2C^{\prime\prime}.
	\end{align*}
	This implies that \eqref{C12} holds with $\gamma=C^{\prime}\bar{\gamma}%
	=\min(C^{\prime\prime},1)\bar{\gamma}$. Lastly, to prove \eqref{C13},
	recalling that $\beta_{n}=n^{2},r_{n}=\sqrt[q+3]{n}$ and $V=0$ on the set $A,$
	we have for any $a\in\lbrack0,1)$,
	\begin{align*}
	\frac{n}{\beta_{n}}\int_{A_{n}^{2}}e^{\frac{\beta_{n}}{n}\gamma\left(
		\mathbf{x}\right)  -(1-a)V\left(  \mathbf{x}\right)  }\ell(d\mathbf{x})  &
	\leq\frac{1}{n}\sum_{\mathbf{z}\in\left(  \mathbb{Z}^{3}\setminus
		B(0,r_{n})\right)  }\int_{A_{n}^{2}\cap B_{\mathbf{z}}}e^{n\Vert
		\mathbf{x}\Vert^{q}}d\mathbf{x}\\
	& \leq\frac{1}{n}\sum_{\mathbf{z}\in\left(  \mathbb{Z}^{3}\setminus
		B(0,r_{n})\right)  }e^{n2\Vert\mathbf{z}\Vert^{q}}a_{\mathbf{z}}\\
	& \leq\frac{1}{n}\sum_{\mathbf{z}\in\left(  \mathbb{Z}^{3}\cap B^{c}%
		(0,r_{n})\right)  }e^{n2\Vert\mathbf{z}\Vert^{q}}e^{-3\Vert\mathbf{z}%
		\Vert^{2q+3}}\\
	& \leq\frac{1}{n}\sum_{\mathbf{z}\in\left(  \mathbb{Z}^{3}\cap B^{c}%
		(0,r_{n})\right)  }e^{n2\Vert\mathbf{z}\Vert^{q}}e^{-3n\Vert\mathbf{z}%
		\Vert^{q}},
	\end{align*}
	which converges to zero as $n\rightarrow\infty$, and hence, \eqref{C13}
	follows. This concludes the proof of Assumption \ref{C}\ref{C1}, and also
	verifies Assumption \ref{C}\ref{C3} for any $\psi\in\Psi$ such that
	$\max_{x:||x||=m}\psi(x)/m^{q}\rightarrow0$.
	
	We now argue by contradiction to prove that $\mathcal{J}$ (equivalently,
	$\mathcal{J}_{\ast}$) does not have any minimizers with compact support. Let
	$\mu_{\min}$ be a minimizer with compact support $K$ that is contained in
	$B(0,R)$ for some $R\in\mathbb{N}$. Let $\mathbf{\tilde{z}}=(6R,0,0)$. We pick
	$\mathbf{\tilde{x}}\in K$ such that $\mu_{\min}(B(\mathbf{\tilde{x}%
	},a_{\mathbf{\tilde{z}}}))>0,$ and by choosing $R$ sufficiently large so that
	$a_{\mathbf{\tilde{z}}}$ is sufficiently small, we can assume without loss of
	generality that
	\begin{equation}
	0<\mu_{\min}(B(\mathbf{\tilde{x}},a_{\mathbf{\tilde{z}}}))<1.\label{strict}%
	\end{equation}
	We define a new measure $\mu_{\min}^{\ast}:=(\mu_{\min})_{|_{K\setminus
			B(\mathbf{\tilde{x}},a_{\mathbf{\tilde{z}}})}}+\mu_{\mathbf{\tilde{z}}},$
	where $\mu_{\mathbf{\tilde{z}}}$ is the measure that minimizes the energy
	$\int_{B_{\mathbf{\tilde{z}}}\times B_{\mathbf{\tilde{z}}}}W(\mathbf{x}%
	,\mathbf{y})\mu(d\mathbf{x})\mu(d\mathbf{y})$ amongst all measures $\mu$ on
	$\mathbb{R}^{3}$ with support in $B_{\mathbf{\tilde{z}}}$ that have
	%$B(\mathbf{z},a_{\mathbf{z'}})$ and
	$\mu(B_{\mathbf{\tilde{z}}})=\mu_{\min}(B(\mathbf{\tilde{x}},a_{\mathbf{\tilde
			{z}}})).$
	%that minimizes the energy $\int_{B_{\mathbf{z'}}\times B_{\mathbf{z'}}}W(x,y)\mu(dx)\mu(dy).$

	We now show that ${\mathcal{J}}(\mu_{\min}^{\ast})<{\mathcal{J}}(\mu_{\min})$.
	First, note that since $V$ is zero on the support of $\mu_{\mathbf{\tilde{z}}%
	}$,
	\begin{align*}
	\mathcal{V}(\mu_{\min})=\int_{\mathbb{R}^{d}}V(\mathbf{x})\mu_{\min
	}(d\mathbf{x})  & =\int_{\mathbb{R}^{d}\setminus B(\mathbf{\tilde{x}%
		},a_{\mathbf{\tilde{z}}})}V(\mathbf{x})\mu_{\min}(d\mathbf{x})+\int
	_{B(\mathbf{\tilde{x}},a_{\mathbf{\tilde{z}}})}V(\mathbf{x})\mu_{\min
	}(d\mathbf{x})\\
	& \geq\int_{\mathbb{R}^{d}}V(\mathbf{x})(\mu_{\min})_{|_{K\setminus
			B(\mathbf{\tilde{x}},a_{\mathbf{\tilde{z}}})}}(d\mathbf{x})+\int
	_{\mathbb{R}^{d}}V(\mathbf{x})\mu_{\mathbf{\tilde{z}}}(d\mathbf{x}%
	)=\mathcal{V}(\mu_{\min}^{\ast}).
	\end{align*}
	Next, by the symmetry and nonnegativity of $W$, we have
	\[
	\mathcal{W}(\mu_{\min})=\int_{\mathbb{R}^{d}\times\mathbb{R}^{d}}%
	W(\mathbf{x},\mathbf{y})\mu_{\min}(d\mathbf{x})\mu_{\min}(d\mathbf{y}%
	)=\widetilde{W}_{1}+\widetilde{W}_{2}+\widetilde{W}_{3},
	\]
	where
	\begin{align*}
	\widetilde{W_{1}}:=  & \int_{(\mathbb{R}^{d}\setminus B(\mathbf{\tilde{x}%
		},a_{\mathbf{\tilde{z}}}))\times(\mathbb{R}^{d}\setminus B(\mathbf{\tilde{x}%
		},a_{\mathbf{\tilde{z}}}))}W(\mathbf{x},\mathbf{y})\mu_{\min}(d\mathbf{x}%
	)\mu_{\min}(d\mathbf{y}),\\
	\widetilde{W}_{2}:=  & \int_{B(\mathbf{\tilde{x}},a_{\mathbf{\tilde{z}}%
		})\times B(\mathbf{\tilde{x}},a_{\mathbf{\tilde{z}}})}W(\mathbf{x}%
	,\mathbf{y})\mu_{\min}(d\mathbf{x})\mu_{\min}(d\mathbf{y}),\\
	\widetilde{W}_{3}:=  & 2\int_{(\mathbb{R}^{d}\setminus B(\mathbf{\tilde{x}%
		},a_{\mathbf{\tilde{z}}}))\times B(\mathbf{\tilde{x}},a_{\mathbf{\tilde{z}}}%
		)}W(\mathbf{x},\mathbf{y})\mu_{\min}(d\mathbf{x})\mu_{\min}(d\mathbf{y}).
	\end{align*}
	Now, since $\mu_{\min}$ has support in $K$, we have
	\[
	\widetilde{W}_{1}=\int_{\mathbb{R}^{d}\times\mathbb{R}^{d}}W(\mathbf{x}%
	,\mathbf{y})(\mu_{\min})_{|_{K\setminus B(\mathbf{\tilde{x}},a_{\mathbf{\tilde
					{z}}})}}(d\mathbf{x})(\mu_{\min})_{|_{K\setminus B(\mathbf{\tilde{x}%
			},a_{\mathbf{\tilde{z}}})}}(d\mathbf{y}).
	\]
	The definition of $\mu_{\mathbf{\tilde{z}}}$ and the fact that $W(\mathbf{x}%
	,\mathbf{y})=W(\mathbf{x}+$\textbf{$\tilde{z}$}$-\mathbf{z},\mathbf{y}%
	+$\textbf{$\tilde{z}$}$-\mathbf{z})\geq0$ for every $\mathbf{x},\mathbf{y}$,
	imply
	\[
	\widetilde{W}_{2}\geq\int_{B_{\mathbf{\tilde{z}}}\times B_{\mathbf{\tilde{z}}%
	}}W(\mathbf{x},\mathbf{y})\mu_{\mathbf{\tilde{z}}}(d\mathbf{x})\mu
	_{\mathbf{\tilde{z}}}(d\mathbf{y})=\int_{\mathbb{R}^{d}\times\mathbb{R}^{d}%
	}W(\mathbf{x},\mathbf{y})\mu_{\mathbf{\tilde{z}}}(d\mathbf{x})\mu
	_{\mathbf{\tilde{z}}}(d\mathbf{y}).
	\]
	Also, since $W(\mathbf{x},\mathbf{y})=2/||\mathbf{x}-\mathbf{y}||$, $\mu_{\min
	}$ has support in $K\subset B(0,R)$ and $B_{\mathbf{\tilde{z}}}\cap
	B(0,5R)=\emptyset$, we have $||\mathbf{x}-\mathbf{y}||\leq2R$ for
	$\mathbf{x},\mathbf{y}\in K$, and $||\mathbf{x}-\mathbf{y}||\geq4R$ for
	$\mathbf{x}\in K,\mathbf{y}\in B_{\mathbf{\tilde{z}}}$. Therefore, we have
	\begin{align*}
	\widetilde{W}_{3}  & \geq\frac{2}{R}\mu_{\min}(\mathbb{R}^{d}\setminus
	B(\mathbf{\tilde{x}},a_{\mathbf{\tilde{z}}}))\mu_{\min}(B(\mathbf{\tilde{x}%
	},a_{\mathbf{\tilde{z}}}))\\
	& =\frac{1}{R}\mu_{\min}(K\setminus B(\mathbf{\tilde{x}},a_{\mathbf{\tilde{z}%
	}}))\mu_{\min}(B(\mathbf{\tilde{x}},a_{\mathbf{\tilde{z}}}))+\frac{1}{R}%
	\mu_{\min}(\mathbb{R}^{d}\setminus B(\mathbf{\tilde{x}},a_{\mathbf{\tilde{z}}%
	}))\mu_{\min}(B(\mathbf{\tilde{x}},a_{\mathbf{\tilde{z}}}))\\
	& \geq2\int_{\mathbb{R}^{d}\times\mathbb{R}^{d}}W(\mathbf{x},\mathbf{y}%
	)(\mu_{\min})_{|_{{K\setminus B(\mathbf{\tilde{x}},a_{\mathbf{\tilde{z}}})}}%
	}(d\mathbf{x})\mu_{\mathbf{\tilde{z}}}(d\mathbf{y})+\frac{1}{R}\mu_{\min
	}(\mathbb{R}^{d}\setminus B(\mathbf{\tilde{x}},a_{\mathbf{\tilde{z}}}%
	))\mu_{\min}(B(\mathbf{\tilde{x}},a_{\mathbf{\tilde{z}}})).
	\end{align*}

	Combining the above relations, recalling that $\mu_{\min}^{\ast}:=(\mu_{\min
	})_{|_{K\setminus B(\mathbf{z},a_{\mathbf{\tilde{z}}})}}+\mu_{\mathbf{\tilde
			{z}}},$ and invoking \eqref{strict}, we conclude that
	\[
	\mathcal{W}(\mu_{\min})\geq\mathcal{W}(\mu_{\min}^{\ast})+\frac{1}{R}\mu
	_{\min}(\mathbb{R}^{d}\setminus B(\mathbf{\tilde{x}},a_{\mathbf{\tilde{z}}%
	}))\mu_{\min}(B(\mathbf{\tilde{x}},a_{\mathbf{\tilde{z}}}))>\mathcal{W}%
	(\mu_{\min}^{\ast}).
	\]
	Since we proved earlier that ${\mathcal{V}}(\mu_{\min})\geq{\mathcal{V}}%
	(\mu_{\min}^{\ast})$, we conclude that $\mathcal{J}(\mu_{\min})>\mathcal{J}%
	(\mu_{\min}^{\ast})$, which contradicts that $\mu_{\min}$ is a minimizer of
	$\mathcal{J}$.\newline
\end{proof}

Unlike Assumption $\ref{C'1}$ (and most conditions imposed in the
literature), which holds independently of $\{\beta _{n}\},$ Assumption $\ref%
{C}\ref{C1}$ is $\{\beta _{n}\}$ dependent. More specifically, for a fixed $%
n,$ one can pick $\beta _{n}$ sufficiently large that (by the Laplace
principle) the measure $P_{n}$ in \eqref{tometro} mainly charges
configurations that are near-infimizers of $H_{n}$, which (as noted above)
have strictly smaller values than the minimizer of $\mathcal{J}$. In these
cases, the LDP could not possibly hold in $\mathcal{P}(\mathbb{R}^{d})$ with
rate function $\mathcal{J}_{\star }$. One might expect that if $\mathcal{J}$
(the{\ anticipated} rate function) is a tightness function, then the LDP
will hold for all{\ speeds} bigger than $n$. However, the comments above
show that this is not true, and it is possible that $\mathcal{J}$ is a
tightness function but the LDP does not hold on $\mathcal{P}(\mathbb{R}^{d})$
with rate function $\mathcal{J}$. It is likely nevertheless, that a
non-trivial LDP still holds true on some compactified space like $\mathcal{P}%
(\mathbb{R}^{d}\cup \{\infty \}).$ The following natural questions arise.

\begin{open problem}
	When potentials are non-diverging, what is the right space to prove LDPs
	when the sequence $\{\beta _{n}\}$ diverges so fast that Assumption \ref{C}%
	\ref{C1} is not satisfied, and are there different rate functions depending
	on the rate of divergence of $\{\beta _{n}\}$? Also, does there exist a
	critical speed $\{\beta _{n}^{\ast }\}$ such that if the LDP holds in $%
	\mathcal{P}(\mathbb{R}^{d})$ for $\{\beta _{n}^{\ast }\}$ then it also holds
	for every sequence $\{\beta _{n}\}$ with $\frac{\beta _{n}^{\ast }}{\beta
		_{n}}\rightarrow \infty $ and $\frac{\beta _{n}}{n}\rightarrow \infty ?$
\end{open problem}

\begin{remark}[Weak confinement on $\mathbb{R}^{2}$]\label{coulos}
	\emph{\ Another example in which the rate function $\mathcal{J}$ was shown
		to have a minimizer that does not have compact support can be found in \cite%
		{Hardy2012} { or in the more recent \cite{Chafai2019}}, where the particular case of the Coulomb potential in dimension 
		$d=2$ is studied. The proofs in \cite{Hardy2012} are based on specific
		properties of the Coulomb potential $-\log |\mathbf{x}-\mathbf{y}|$ and the
		complex plane. Our result works for more general $W,d,\beta _{n},$ and
		topological spaces $\mathcal{P}_{\psi }(\mathbb{R}^{d})$.
		%and, as already
		%observed in Remark \ref{rem-relent}, makes non-trivial use of the relative
		%entropy functional even when it does not appear in the rate function.
		We expect that the weak convergence methods of \cite{Dupuis} that we use here
		can be used to study other problems with weakly confining potentials. }
\end{remark}

\subsubsection{Discontinuous interaction potentials}

Our assumptions are also satisfied for cases where either $V$ and/or $W$ are
discontinuous. Example \ref{Vnoinfinity} already provides one example where $%
V$ is discontinuous. We now given another illustrative example where $W$ is
discontinuous.

\begin{example}
	\label{egWdisc} \emph{\ Suppose $V(\mathbf{x})=\|\mathbf{x}\|^{2},$ $\ell$
		is Lebesgue measure, and 
		\begin{equation*}
		W(\mathbf{x},\mathbf{y})=\sum_{i=1}^{N}I_{B_{i}}(\mathbf{x})I_{B_{i}}(%
		\mathbf{y})h_{i}(\mathbf{x},\mathbf{y}),
		\end{equation*}
		where $B_{i}=B(\mathbf{x}_{i},R_{i})$, with $R_i > 0$, $\mathbf{x}_i \in 
		\mathbb{R}^d, i = 1, \ldots, N$, is a collection of open balls with the
		property that the minimum distance between any two balls is $D > 0$, and
		each $h_{i}:\mathbb{R}^{d}\times\mathbb{R}^{d}\rightarrow\mathbb{R}%
		_{+}\cup\{\infty\}$ is any l.s.c. function for which Assumptions $\ref{A}$
		and $\ref{C}\ref{C2}$ hold. Then Assumptions \ref{A}, \ref{C'1} and \ref{C}%
		\ref{C2} are satisfied for $(V,W).$ }
\end{example}

Before we provide a justification of our assertions, note that in this
example $W$ can be interpreted as an interaction that takes place only when
both particles are inside the same ball $B_{i}$. For visualization purposes
one can take $h_{i}=K_{\Delta }$ for every $i$, with $K_{\Delta }$ from
Example \ref{pfffff}. A situation like this can arise with an electric
potential between particles that are positioned in different regions with
isolating boundaries.

\begin{proof}
	[Proof of Example \ref{egWdisc}.] Since each $h_{i}$ satisfies Assumption
	\ref{A}, we get that Assumption \ref{A} holds for $(V,W)$ with $a=0$, and
	Assumption \ref{C'1} holds immediately due to the fact that
	$V=\Vert x\Vert^{2}$ and the 
	definition of $W$. We now sketch a proof of why Assumption $\ref{C}\ref{C2}$
	also holds. Let $\mu\in\mathcal{P}(\mathbb{R}^{d}),$ and $\epsilon>0$. We set
	$\mu_{i}(\cdot)=\mu(\cdot\cap B_{i})$, $M=(\cup_{i=1}^{N}B_{i})^{c}$ and
	$\mu_{0}(\cdot)=\mu(\cdot\cap M).$ Since interactions take place only inside
	$B_{i}$ and $\int V(x)\mu(dx)$ is linear with respect to $\mu,$ we have
	$\mathcal{J}(\mu)=\sum_{i=0}^{N}\mathcal{J}(\mu_{i})$. We would like to
	approximate each $\mu_{i}$ by an absolutely continuous measure with the same
	total mass, and with energy close to the original and support inside its
	original support (so no new interaction occurs). By properties of integration,
	if $\mu_{i}^{\delta}(\cdot)=\frac{\mu_{i}(B(\mathbf{x}_{i},R_{i}))}{\mu
		_{i}(B(\mathbf{x}_{i},R_{i}-\delta))}\mu_{i}(\cdot\cap B(\mathbf{x}_{i}%
	,R_{i}-\delta)),$ then for all small $\delta>0$ we have $|\mathcal{J}(\mu
	_{i}^{\delta})-\mathcal{J}(\mu_{i})|\leq\epsilon,$ for all $i\in\{1,\dots
	,N\}$. It is possible that $\mu_{0}(\partial M)>0$. However, we can move this mass to
	the interior of $M$ by pushing $\mu_{0}$ forward under%
	\[
	f_{\delta}(\mathbf{x})=\sum_{i=1}^{N}I_{\partial B_{i}}(\mathbf{x})\left(
	\mathbf{x}+\frac{\delta D}{2}(\mathbf{x}-\mathbf{x}_{i})\right)
	+I_{K\setminus\cup_{i}^{N}\partial B_{i}}(\mathbf{x})\mathbf{x}%
	\]
	with $\delta>0$ small. We can even assume that the resulting $\mu_{0}^{\delta
	}$ has compact support in the interior of $M$ by removing the mass in a small neighborhood of $\partial M$, and then renormalizing to keep the total mass constant as
	was done for the other $\mu_{i}.$ For small $\delta>0$ it easy to see that,
	since only the continuous confined potential acts on it, $|\mathcal{J}(\mu
	_{0})-\mathcal{J}(\mu_{0}^{\delta})|\leq\epsilon.$ Since Assumption
	$\ref{C}\ref{C2}$ is satisfied for each $h_{i},$ we can apply it to get a
	measure $\mu_{i}^{\delta,n},$ absolutely continuous with respect to the
	Lebesgue measure and with the same mass as $\mu^{i},$ that is supported only
	inside $B_{i}$ such that $|\mathcal{J}(\mu_{i}^{\delta,n})-\mathcal{J}(\mu
	_{i}^{\delta})|\leq\epsilon,$ and $\mu_{i}^{\delta,n},\mu_{i}^{\delta}$ are
	close in the weak topology.
	%For more detail one this last argument, one can look before \eqref{arg}.
	We set $\mu_{0}^{\delta,n}=\mu_{0}^{\delta}\ast G_{n},$ where $G_{n}$ is a
	truncated Gaussian of radius $1/n$, which creates an absolutely continuous
	measure with support in $K$ for which $|\mathcal{J}(\mu_{0}^{\delta
		,n})-\mathcal{J}(\mu_{0}^{\delta})|\leq\epsilon,$ for large enough $n$. Then
	$\mu^{\delta,n}=\sum_{i=0}^{N}\mu_{i}^{\delta,n},$ satisfies $|\mathcal{J}%
	(\mu^{\delta,n})-\mathcal{J}(\mu)|\leq(2N+2)\epsilon,$ and also by making $n$
	big enough and $\delta>0$ small enough we can have $d_{w}(\mu^{\delta,n}%
	,\mu)\leq\epsilon.$
\end{proof}

{ 
	\begin{open problem} 
		In \cite{Chafai2014}, the (extended) continuity of $W$ was used as a sufficient condition for Assumption \ref{C}\ref{C2} to hold. Example \ref{egWdisc} above
		shows that continuity is not necessary for Assumption \ref{C}\ref{C2} to hold.
		%  We discovered some cases where assumption Assumption \ref{C}\ref{C2} fails and we tried to prove LDPs for these cases without success. We speculate though, that it
		Our preliminary investigations suggest that in some of these cases, it
		may be possible to establish LDPs with different rate functions,
		namely those  that are given by some
		type of regularization of the $\mathfrak{J}$ functional (like appropriate
		$\Gamma$-convergence relaxations but only with sequences that belong to
		specific subsets of the set of probability measures). We pose the existence
		of LDPs for cases where Assumption \ref{C}\ref{C2} fails as an open problem.
	\end{open problem}
}
\subsection{Outline of the paper}

\label{subs-out}

The structure of the rest of the article is as follows. In Section \ref%
{sec:rate} we provide definitions and lemmas that are used throughout the
paper and then show that the candidate rate functions introduced above are
indeed rate functions. In Section \ref{sec:4} we prove results for the speed 
$\beta _{n}=n,$ and in Section \ref{sec:5} we consider the case of speeds $%
\beta _{n}$ that grow faster than $n.$  { An outline of the weak convergence
approach and} proofs of several lemmas that are
needed for the main theorems are collected in the { Appendices. } 

\section{Rate Function Property}

\label{sec:rate}

In what follows, % the function $\psi: \mathbb{R}^d \mapsto \mathbb{R}_+$
% always lies in
recall the set $\Psi$ defined in \eqref{cond-psi}. In Section \ref%
{subs:ratefn}, we show that under various combinations of Assumptions \ref{A}%
-\ref{C}, the functions $\mathcal{I}_{\star}$ and $\mathcal{J}_{\star}$
defined in \eqref{def:Istar} and \eqref{def:Jstar}, and for $\psi \in \Psi$,
the functions $\mathcal{I}_\star^\psi$ and $\mathcal{J}_\star^\psi$ defined
in \eqref{def:Istarpsi} and \eqref{def:Jstarpsi} are rate functions on the
spaces $\mathcal{P}(\mathbb{R}^{d})$ and $\mathcal{P}_{\psi}(\mathbb{R}%
^{d}), $ respectively. To begin with, in Section \ref{subs:basdef} we first
introduce basic notions that will be used in the rest of the paper.

\subsection{Basic definitions}

\label{subs:basdef}

\begin{definition}
	Let $\inset$ be an index set and let $\{\lambda_{a},\,a\in \inset\}\subset 
	\mathcal{P}\left( S\right) $. The collection $\{\lambda_{a},\,a\in \inset\}$
	is said to be tight if for every $\epsilon>0,$ there is a compact set $%
	K_{\epsilon }\subset S,$ such that $\inf\{\lambda_{a}\left(
	K_{\epsilon}\right) ,\,a\in \inset\}\geq1-\epsilon.$
\end{definition}

Furthermore, a sequence of random variables is said to be tight if and only
if the corresponding distributions are tight. The proofs of the following
three lemmas can be found in \cite[Section 2.2]{Bud}.

\begin{lemma}
	\label{tightness} A collection $\left\{ \lambda_{a},a\in \inset\right\}
	\subset\mathcal{P}(S)$ is tight if and only if there exists a tightness
	function $g:S\rightarrow[0,\infty]$ such that $\sup_{a\in \inset%
	}\int_{S}g(x)\lambda _{a}(dx)<\infty$.
\end{lemma}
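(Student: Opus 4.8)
The plan is to prove the two implications separately; the forward direction (tightness $\Rightarrow$ existence of $g$) carries the real content, while the converse is an immediate application of Markov's inequality.

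For the ``if'' direction, suppose $g$ is a tightness function with $M \doteq \sup_{a \in A}\int_S g\, d\lambda_a < \infty$. Given $\epsilon > 0$, I would choose $L > M/\epsilon$ and set $K_\epsilon \doteq \{x \in S : g(x) \le L\}$, which is compact precisely because $g$ is a tightness function. Then for every $a \in A$, Markov's inequality gives
\[
\lambda_a(S \setminus K_\epsilon) = \lambda_a(\{g > L\}) \le \frac{1}{L}\int_S g\, d\lambda_a \le \frac{M}{L} < \epsilon,
\]
so $\inf_{a \in A}\lambda_a(K_\epsilon) \ge 1 - \epsilon$ and $\{\lambda_a\}$ is tight. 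Note this direction uses only the compactness of the level sets of $g$, not its lower semicontinuity.

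For the ``only if'' direction, suppose $\{\lambda_a, a \in A\}$ is tight. For each $k \in \mathbb{N}$ I would apply the definition of tightness with $\epsilon = 2^{-k}$ to get a compact set $\tilde K_k \subset S$ with $\sup_{a}\lambda_a(S \setminus \tilde K_k) \le 2^{-k}$, and then replace $\tilde K_k$ by $K_k \doteq \bigcup_{j=1}^k \tilde K_j$, a finite union of compacts, hence compact; this makes the sequence nondecreasing, $K_1 \subseteq K_2 \subseteq \cdots$, while $\sup_a \lambda_a(S \setminus K_k) \le 2^{-k}$ still holds since $S\setminus K_k \subseteq S\setminus \tilde K_k$. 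I would then define
\[
g(x) \doteq \sum_{k=1}^\infty \mathbf{1}_{S \setminus K_k}(x), \qquad x \in S.
\]
Since each $K_k$ is compact in the metric space $S$, it is closed, so $\mathbf{1}_{S \setminus K_k}$ is lower semicontinuous; the partial sums are finite sums of lower semicontinuous functions increasing pointwise to $g$, hence $g$ is lower semicontinuous. Because the sets $S \setminus K_k$ are nonincreasing, $g(x)$ counts the indices $k$ with $x \notin K_k$, and one checks that $g(x) \le m$ for an integer $m \ge 0$ exactly when $x \in K_{m+1}$; consequently $\{x : g(x) \le M\} = K_{\lfloor M \rfloor + 1}$ for every $M \in [0,\infty)$, which is compact. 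Thus $g$ is a tightness function. Finally, by monotone convergence,
\[
\int_S g\, d\lambda_a = \sum_{k=1}^\infty \lambda_a(S \setminus K_k) \le \sum_{k=1}^\infty 2^{-k} = 1 \qquad \text{for all } a \in A,
\]
so $\sup_{a \in A}\int_S g\, d\lambda_a \le 1 < \infty$, completing the proof.

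The only step requiring genuine care is checking that the constructed $g$ has compact level sets, and this reduces to the monotonicity $K_k \uparrow$ together with the fact that compactness is an intrinsic property; in particular, the absence of completeness of $S$ plays no role here, which is exactly why this formulation is convenient for the non-weak topologies considered later. Everything else — lower semicontinuity via increasing limits, the integral bound via monotone convergence — is routine.
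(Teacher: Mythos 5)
Your proof is correct, and it is essentially the standard argument (the one in the references the paper cites for this lemma, since the paper itself omits the proof): Markov's inequality on the level set $\{g\le L\}$ for the ``if'' direction, and for the ``only if'' direction the layered construction $g=\sum_k \mathbf{1}_{S\setminus K_k}$ from an increasing family of compact sets with $\sup_a\lambda_a(S\setminus K_k)\le 2^{-k}$. Your care in making the $K_k$ nested (so that the level sets of $g$ are exactly the $K_{m+1}$, hence compact) and in noting that compactness, not completeness, of $S$ is what matters are precisely the right points.
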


\begin{lemma}
	\label{ayid} Let $g$ be a tightness function on $S$. Define $G:\mathcal{P}%
	(S)\rightarrow\lbrack0,\infty]$ by 
	\begin{equation*}
	G(\mu)=\int_{S}g(x)\mu(dx).
	\end{equation*}
	Then for each $M<\infty$ the set $\left\{ \mu\in\mathcal{P}(S):G(\mu)\leq
	M\right\} $ is tight (and hence precompact), and moreover, $G$ is a
	tightness function on $\mathcal{P}(S)$.
\end{lemma}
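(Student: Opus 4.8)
The plan is to prove the two assertions separately: first that every sublevel set of $G$ is tight (hence precompact by Prohorov's theorem), and then that $G$ is lower semicontinuous on $\mathcal{P}(S)$, so that these sublevel sets are in fact compact. For the tightness, fix $M<\infty$ and set $\mathcal{L}_M\doteq\{\mu\in\mathcal{P}(S):G(\mu)\le M\}$. Since $\sup_{\mu\in\mathcal{L}_M}\int_S g\,d\mu\le M<\infty$ and $g$ is itself a tightness function on $S$, Lemma \ref{tightness} applied with this $g$ immediately yields that $\mathcal{L}_M$ is tight. (Equivalently, this is just Markov's inequality: writing $K_L\doteq\{x\in S:g(x)\le L\}$, which is compact because $g$ is a tightness function, one has $\mu(S\setminus K_L)\le G(\mu)/L\le M/L$ for all $\mu\in\mathcal{L}_M$, so taking $L=M/\epsilon$ gives $\inf_{\mu\in\mathcal{L}_M}\mu(K_L)\ge1-\epsilon$.) The parenthetical ``hence precompact'' then follows from the standard half of Prohorov's theorem, valid for any separable metric space $S$: tight subsets of $\mathcal{P}(S)$ are relatively compact in the weak topology (see \cite{Billingsley2012}). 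Separability of $S$ also ensures $\mathcal{P}(S)$ is metrizable, so relative compactness and relative sequential compactness coincide.

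To upgrade ``precompact'' to ``compact'' I would show that $G$ is lower semicontinuous for the weak topology. Because $S$ is a metric space and $g\colon S\to[0,\infty]$ is nonnegative and lsc, $g$ is the pointwise nondecreasing limit of the bounded Lipschitz functions $g_k(x)\doteq\inf_{y\in S}\{(g(y)\wedge k)+k\,d(x,y)\}$, $k\in\mathbb{N}$ (the standard Moreau--Yosida/inf-convolution approximations, here also truncated at level $k$ so that $0\le g_k\le k$). By the monotone convergence theorem, $G(\mu)=\sup_k\int_S g_k\,d\mu$ for every $\mu\in\mathcal{P}(S)$, and for each fixed $k$ the functional $\mu\mapsto\int_S g_k\,d\mu$ is continuous in the weak topology since $g_k\in C_b(S)$. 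A pointwise supremum of continuous functions is lsc, so $G$ is lsc. Hence each $\mathcal{L}_M$ is closed; being also precompact, it is compact. Together with $G\ge0$, this shows $G$ is a tightness function on $\mathcal{P}(S)$.

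The steps using Markov's inequality and Prohorov's theorem are routine. The one point requiring a little care — and the only genuinely substantive step — is the lower semicontinuity of $G$: the monotone approximation of the possibly unbounded, possibly discontinuous integrand $g$ by bounded continuous functions, together with the interchange of the supremum and the integral. This is standard, so I do not expect any real obstacle, but it is worth spelling out since the integrand $g$ is not assumed continuous or bounded.
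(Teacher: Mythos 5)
Your proof is correct, and since the paper itself does not reprint a proof of this lemma (it defers to \cite{Dupuis1997a, dup5}), your argument is essentially the standard one found there: Markov's inequality applied to the compact sublevel sets of $g$ gives tightness of $\{G\le M\}$, the tight-implies-relatively-compact half of Prohorov's theorem (valid for any separable metric $S$) gives precompactness, and lower semicontinuity of $\mu\mapsto\int_S g\,d\mu$ via monotone approximation of the lsc integrand by bounded continuous (Moreau--Yosida) functions closes the sublevel sets, making them compact. The only step needing care, the interchange justified by monotone convergence, is handled correctly.
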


\begin{lemma}
	\label{expectation} Let $\left\{ \Lambda_{a},a\in \inset\right\} $ be random
	elements taking values in $\mathcal{P}(S)$ 
	%  (i.e., random probability measures),
	and let $\lambda_{a}=E\Lambda_{a}$. Then $\left\{ \Lambda_{a},a\in \inset%
	\right\} $ is tight if and only if $\left\{ \lambda_{a},a\in \inset\right\} $
	is tight. In other words, a collection of random probability measures is
	tight if and only if the corresponding collection of ``means'' is tight in
	the space of (deterministic) probability measures.
\end{lemma}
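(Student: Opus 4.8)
The plan is to deduce the equivalence from the tightness-function characterizations recorded in Lemmas \ref{tightness} and \ref{ayid}, since these are phrased for arbitrary separable metric spaces and do not require completeness. Throughout, recall that tightness of the random elements $\{\Lambda_{a}\}$ means tightness of the laws $\{\Law(\Lambda_{a})\}\subset\mathcal{P}(\mathcal{P}(S))$, and that $\lambda_{a}(B)=E[\Lambda_{a}(B)]$ for $B\in\mathcal{B}(S)$.

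First I would treat the implication ``$\{\lambda_{a}\}$ tight $\Rightarrow$ $\{\Lambda_{a}\}$ tight''. By Lemma \ref{tightness} there is a tightness function $g:S\rightarrow[0,\infty]$ with $C\doteq\sup_{a}\int_{S}g\,d\lambda_{a}<\infty$. Since $g\geq 0$ is measurable, Tonelli's theorem gives $\int_{S}g\,d\lambda_{a}=E\big[\int_{S}g\,d\Lambda_{a}\big]=E[G(\Lambda_{a})]$, where $G(\mu)\doteq\int_{S}g\,d\mu$. By Lemma \ref{ayid}, $G$ is a tightness function on the separable metric space $\mathcal{P}(S)$, and
\[
\sup_{a}\int_{\mathcal{P}(S)}G(\mu)\,\Law(\Lambda_{a})(d\mu)=\sup_{a}E[G(\Lambda_{a})]=C<\infty .
\]
Applying Lemma \ref{tightness} with the ambient space taken to be $\mathcal{P}(S)$ yields that $\{\Law(\Lambda_{a})\}$ is tight, i.e. $\{\Lambda_{a}\}$ is tight.

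For the converse ``$\{\Lambda_{a}\}$ tight $\Rightarrow$ $\{\lambda_{a}\}$ tight'', fix $\epsilon>0$. By tightness of $\{\Lambda_{a}\}$ there is a compact $\mathcal{K}\subset\mathcal{P}(S)$ with $\inf_{a}P(\Lambda_{a}\in\mathcal{K})\geq 1-\epsilon/2$. The crucial step is to pass from compactness of $\mathcal{K}$ in $\mathcal{P}(S)$ to uniform tightness of $\mathcal{K}$ as a family of measures on $S$: by Prokhorov's theorem there is a compact $C\subset S$ with $\inf_{\mu\in\mathcal{K}}\mu(C)\geq 1-\epsilon/2$. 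Using that $\mu\mapsto\mu(C)$ is Borel measurable on $\mathcal{P}(S)$ for closed $C$ and that $\lambda_{a}(C)=E[\Lambda_{a}(C)]$, we obtain for every $a$
\[
\lambda_{a}(C)\;\geq\;E\big[\Lambda_{a}(C)\,\mathbf{1}_{\{\Lambda_{a}\in\mathcal{K}\}}\big]\;\geq\;(1-\epsilon/2)\,P(\Lambda_{a}\in\mathcal{K})\;\geq\;(1-\epsilon/2)^{2}\;\geq\;1-\epsilon ,
\]
so $\{\lambda_{a}\}$ is tight. (One may instead run this direction purely through tightness functions: Lemma \ref{tightness} supplies a tightness function $F$ on $\mathcal{P}(S)$ with $M\doteq\sup_{a}E[F(\Lambda_{a})]<\infty$, Markov's inequality gives $P(\Lambda_{a}\notin\{F\leq M/\epsilon\})\leq\epsilon$, and the same averaging argument then applies to the compact level set $\{F\leq M/\epsilon\}$.)

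The only delicate point, and the step I expect to require the most care, is the descent from $\mathcal{P}(S)$ to $S$ in the converse direction, i.e. deducing uniform tightness of a collection of measures on $S$ from compactness of the corresponding set in $\mathcal{P}(S)$. This is precisely (the nontrivial half of) Prokhorov's theorem and is immediate when $S$ is Polish, which is the setting in which this lemma is invoked in the sequel (the inner space being $\mathbb{R}^{d}$); for a general separable metric $S$ this converse can be subtle and would need an additional argument. Everything else—measurability of $\mu\mapsto\mu(C)$ on closed sets, the use of Tonelli's theorem, and the bookkeeping of the $\epsilon$'s—is routine.
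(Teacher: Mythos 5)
The paper does not actually prove Lemma \ref{expectation}; it is quoted from \cite{Dupuis1997a, dup5}, so there is no in-text argument to compare against. Your proof is the standard one and is correct in the setting in which the lemma is used. The direction ``means tight $\Rightarrow$ random elements tight'' via the mean-measure identity $\int_S g\,d\lambda_a=\mathbb{E}\bigl[\int_S g\,d\Lambda_a\bigr]$ together with Lemmas \ref{tightness} and \ref{ayid} (applied on the separable metric space $\mathcal{P}(S)$) is clean and works for any separable metric $S$; no completeness is needed there. For the converse you correctly isolate the one nontrivial ingredient: that a compact subset $\mathcal{K}\subset\mathcal{P}(S)$ is uniformly tight as a family of measures on $S$, i.e.\ the converse half of Prokhorov's theorem. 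Your caution here is warranted and not merely cosmetic: for a general separable metric $S$ this step, and indeed the lemma itself, can fail --- take $\Lambda$ deterministic, equal to a single non-tight Borel probability measure $\mu$; then $\{\Lambda\}$ is tight (its law is $\delta_\mu$ and $\{\mu\}$ is compact in $\mathcal{P}(S)$) while $\lambda=\mu$ is not. So the statement implicitly presupposes $S$ Polish (or at least that Borel measures on $S$ are Radon), which is exactly the hypothesis in the cited sources, and is satisfied everywhere the paper invokes this lemma, since there the inner space is $S=\mathbb{R}^d$ (the non-complete space $\mathcal{P}_\psi(\mathbb{R}^d)$ only ever plays the role of the outer space, where only the tightness-function direction is used). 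With that understanding, your averaging estimate $\lambda_a(C)\ge\mathbb{E}\bigl[\Lambda_a(C)\mathbf{1}_{\{\Lambda_a\in\mathcal{K}\}}\bigr]\ge(1-\epsilon/2)^2$ and the measurability remarks are all in order, and the alternative route through a tightness function $F$ on $\mathcal{P}(S)$ with Markov's inequality is equivalent, since it hits the same compact-level-set issue. In short: no gap relative to the intended (Polish) setting, and your flag about general separable $S$ identifies a genuine limitation of the statement rather than of your argument.
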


The next result identifies a convenient tightness function on $\mathcal{P}%
_{\psi}\left( \mathbb{R}^{d}\right)$; see Appendix \ref{sec-apB} for a proof.

\begin{lemma}
	\label{tightnessfunction} Let $\psi \in \Psi$ and $\phi \in \Phi$, with $%
	\Psi $ and $\Phi$ as defined in \eqref{cond-psi} and \eqref{def-Phi},
	respectively. Then %$\lim_{c\rightarrow\infty}\inf_{\mathbf{x}
	%$:\left\Vert \mathbf{x}\right\Vert =c}\psi\left(  \mathbf{x}\right)  =\infty$.
	% let $\phi:\mathbb{R}_{+}\rightarrow\mathbb{R}$ be a lsc function with
	% $\lim_{s\rightarrow\infty}\frac{\phi\left(  s\right)  }{s}=\infty$. Then
	\begin{equation*}
	{\mathcal{T}}(\mu):=\int_{\mathbb{R}^{d}}\phi\left( \psi\left( \mathbf{x}%
	\right) \right) \mu\left( d\mathbf{x}\right)
	\end{equation*}
	is a tightness function on $\mathcal{P}_{\psi}\left( \mathbb{R}^{d}\right) .$
\end{lemma}

Finally, it will be convenient to introduce the following projection
operators to define marginal distributions.

\begin{definition}
	\label{projection} We denote by $\pi^{k},k=1,2,$ the projection operators on
	a product space $S_{1}\times S_{2}$ defined by 
	\begin{equation*}
	\pi^{1}:(x_{1},x_{2})\rightarrow x_{1}\in S_{1},\hspace{20pt}%
	\pi^{2}:(x_{1},x_{2})\rightarrow x_{2}\in S_{2}.
	\end{equation*}
\end{definition}

\subsection{Verification of the rate function property}

\label{subs:ratefn}

% We first verify that the functions 
%  $\mathcal{I}_\star, \mathcal{J}_\star$, ${\mathcal I}_*^{\psi}$ and
%         ${\mathcal J}^\psi_*$ are indeed rate functions. 

\begin{lemma}
	\label{lsc} Suppose Assumption \ref{A} holds. Then $\mathcal{I}_{\star }$
	and $\mathcal{J}_{\star }$ defined in \eqref{def:Istar} and \eqref{def:Jstar}%
	, respectively, are lsc on $\mathcal{P}(\mathbb{R}^{d})$. Moreover, for $%
	\psi \in \Psi $, ${\mathcal{I}}_{\ast }^{\psi }$ and ${\mathcal{J}}_{\ast
	}^{\psi }$ defined in \eqref{def:Istarpsi} and \eqref{def:Jstarpsi},
	respectively, are lsc on ${\mathcal{P}}_{\psi }(\mathbb{R}^{d})$.
\end{lemma}

\begin{proof} 
	We start by showing that  the functional
	$\mathcal{J}_{a}$  defined in \eqref{fnal-J} is lsc.    For $\mu\in
	\mathcal{P}(\mathbb{R}^{d})$, let $\mu\otimes\mu$ denote the
	corresponding product measure on $\mathbb{R}^{d}\times\mathbb{R}^{d}$,
	and recall from \eqref{fnal-J} that  $\mathcal{J}_{a}(\mu)=\mathfrak{J}_{a}(\mu
	\otimes\mu)$, with $\mathfrak{J}_{a}$ defined as in \eqref{scr}. 
	The map $\mu\rightarrow\mu\otimes\mu$ from $\mathcal{P}%
	(\mathbb{R}^{d})$ to $\mathcal{P}(\mathbb{R}^{d}\times\mathbb{R}^{d})$ is
	continuous, and by Fatou's lemma (for weak convergence) the map $\zeta
	\mapsto\mathfrak{J}_{a}(\zeta)$ is lower semicontinuous if $W(\mathbf{x},\mathbf{y})+aV(\mathbf{x}%
	)+aV(\mathbf{y})$ is lower
	semicontinuous and bounded from below.  Since the latter property holds under
	Assumption \ref{A}, it follows that $\mathcal{J}_{a}%
	$ is lsc.  Since $\mathcal{I}=\mathcal{J}_{a}+\mathcal{R}%
	(\cdot|e^{-(1-a)V}\ell)$ and, as is well known, $\mathcal{R}\left(  \cdot|e^{-(1-a)V}\ell\right)  $ is
	lsc on $\mathcal{P}(\mathbb{R}^{d})$, this shows that $\mathcal{I}$,
	and hence $\mathcal{I}_*$, are lsc. 
	By the same argument,  the lower semicontinuity of $\mathcal{J}$ can be deduced from the fact that
	$\mathcal{J}=\mathfrak{J_{1}}(\mu\otimes\mu)$ where $\mathfrak{J}_{a}$ is given in
	\eqref{scr}, and the fact that $(\mathbf{x}, \mathbf{y})
	\mapsto$ $W(\mathbf{x},\mathbf{y})+V(\mathbf{x}%
	)+V(\mathbf{y})$ is lsc  and uniformly bounded from below due
	to Assumption \ref{A}, and from 
	\eqref{def:Jstar}, it follows that $\mathcal{J}_*$ is lsc. 	
	Since the topology on
	$\mathcal{P}_{\psi}(\mathbb{R}^{d})$ is stronger than 
	that on $\mathcal{P}(\mathbb{R}^{d}),$ it follows that both
	$\mathcal{I}_{\star}^\psi$ and  $\mathcal{J}_{\star}^\psi$  defined
	in \eqref{def:Istarpsi} and \eqref{def:Jstarpsi}, respectively,
	are also lsc on $\mathcal{P}_{\psi}(\mathbb{R}^{d}).$ 
\end{proof}

\begin{lemma}
	\label{lem-ratepsi} Suppose Assumption \ref{A} is satisfied. Then $\mathcal{I%
	}$ is a rate function on $\mathcal{P}(\mathbb{R}^{d}).$ If, in addition,
	there exists $\psi \in \Psi$ such that Assumption \ref{B} is satisfied, then 
	${\mathcal{I}}_{\star}^\psi$ is a rate function on $\mathcal{P}_{\psi}(%
	\mathbb{R}^{d})$.
\end{lemma}

\begin{proof}
	Since $\mathcal{I}_{\star}$ is lsc on  $\mathcal{P}(\mathbb{R}^{d})$
	by Lemma \ref{lsc},  
	it only remains to show that the level sets of $\mathcal{I}_{\star}$,
	% \,(respectively, ${\mathcal{I}}_{\star}^\psi)$
	or equivalently $\mathcal{I}$, are precompact  on
	$\mathcal{P}(\mathbb{R}^{d})$. 
	% and 
	% $\mathcal{P}_{\psi}(\mathbb{R}^{d})$, respectively.
	Since  
	$\mathcal{I}=\mathcal{J}_{a}+\mathcal{R}(\cdot|e^{-(1-a)V}\ell)$,
	this holds because
	$\mathcal{R}(\cdot|e^{-(1-a)V}\ell)$ is a rate function
	on $\mathcal{P}(\mathbb{R}^{d})$ and $\mathcal{J}_{a}$ is bounded below due to
	Assumption \ref{A}. Likewise, for $\psi \in \Psi$, to show
	that $\mathcal{I}_{\star}^\psi$ is a rate function, due to 
	Lemma \ref{lsc} it
	suffices to show that the level sets of $\mathcal{I}$ are compact in
	$\mathcal{P}_{\psi}(\mathbb{R}^{d}).$ 
	This follows from Lemma \ref{tightness} 
	and the fact that  Assumption
	\ref{B} implies there exists a function $\phi \in \Phi$ 
	such that if $\mathcal{I} \leq C$ then 
	$\int_{\mathbb{R}^{d}}\phi\left(  \psi\left(  \mathbf{x}\right)  \right)
	\mu\left(  d\mathbf{x}\right)  \leq C.$ 
\end{proof}

In what follows, for $a\in \lbrack 0,1]$ and $\mu \in {\mathcal{P}}(\mathbb{R%
}^{d})$, let 
\begin{equation}
\mathcal{J}_{a,\neq }(\mu ):=\frac{1}{2}\int_{(\mathbb{R}^{d}\times \mathbb{R%
	}^{d})_{\neq }}\left( W(\mathbf{x},\mathbf{y})+aV(\mathbf{x})+aV(\mathbf{y}%
)\right) \mu (d\mathbf{x})\mu (d\mathbf{y}),  \label{Janeq}
\end{equation}%
where we recall that $A_{\neq }$ is the set $A$ with its diagonal excised.
Also, denote ${\mathcal{J}}_{1,\neq }$ simply as ${\mathcal{J}}_{\neq }$.

\begin{lemma}
	\label{Jrate} Let $V$ and $W$ satisfy Assumption \ref{A} and Assumption \ref%
	{C}\ref{C1} (resp. Assumption \ref{C}\ref{C3} for some $\psi \in \Psi$),
	then $\mathcal{J}_{\star}$ (resp. $\mathcal{J}^{\psi}_{\star})$ are
	tightness functions on $\mathcal{P}(\mathbb{R}^{d})$ (resp. $\mathcal{P}%
	_{\psi}(\mathbb{R}^{d})).$
\end{lemma}

\begin{proof}
	From the definitions of $\mathcal{J}_{\star}$ and $\mathcal{J}_{\star}^{\psi}$
	in (\ref{def:Jstar}) and (\ref{def:Jstarpsi}), it is clear that to prove the
	lemma, it suffices to show that under Assumptions \ref{C}\ref{C1} and
	\ref{C}\ref{C3}, $\mathcal{J}$ is a tightness function in the respective
	spaces $\mathcal{P}(\mathbb{R}^{d})$ and $\mathcal{P}_{\psi}(\mathbb{R}^{d})$.
	The fact that $\mathcal{J}$ is lsc follows from Lemma \ref{lsc}. It remains to
	prove that the functionals have precompact level sets. For this, by Lemmas
	\ref{tightness} and \ref{ayid}, it suffices to prove that there exist
	$C^{\prime}>0$ and $C_{1}\in\mathbb{R}$ such that for every $\mu\in
	\mathcal{P}(\mathbb{R}^{d})$
	\begin{equation}
	2{\mathcal{J}}(\mu)=\int_{\mathbb{R}^{d}\times\mathbb{R}^{d}}\left(  V\left(
	\mathbf{x}\right)  +V\left(  \mathbf{y}\right)  +W\left(  \mathbf{x}%
	,\mathbf{y}\right)  \right)  \mu\left(  d\mathbf{x}\right)  \mu\left(
	d\mathbf{y}\right)  \geq C^{\prime}\int_{\mathbb{R}^{d}}\gamma(\mathbf{x}%
	)\mu(d\mathbf{x})+C_{1}\label{toshow}%
	\end{equation}
	for some tightness function $\gamma$. We will first prove that this is true
	for every $\mu\in\mathcal{P}(\mathbb{R}^{d})$ without atoms and with compact
	support,
	%in $B(0,R),$ for some $R\in (0,\infty),$
	and then use a limiting argument. By Assumption \ref{C}\ref{C1}
	(resp.\ Assumption \ref{C}\ref{C3}), there exist a tightness function
	$\gamma:\mathbb{R}^{d}\mapsto\mathbb{R}$, $A\in{\mathcal{B}}(\mathbb{R}^{d})$
	and $C\in\mathbb{R}$ such that the inequality (\ref{C11}) holds:
	\begin{equation}
	\gamma\left(  \mathbf{x}\right)  I_{A^{c}}(\mathbf{x})+\gamma\left(
	\mathbf{y}\right)  I_{A^{c}}(\mathbf{y})\leq V\left(  \mathbf{x}\right)
	+V\left(  \mathbf{y}\right)  +W\left(  \mathbf{x},\mathbf{y}\right)
	+C.\label{gamma-ineq}%
	\end{equation}
	Fix $R<\infty$ and $\mu\in\mathcal{P}(\mathbb{R}^{d})$ whose support lies in
	$B(0,R)$. Integrating both sides of \eqref{gamma-ineq} with respect to $\mu$,
	we have
	\begin{equation}
	2\int_{A^{c}}\gamma\left(  \mathbf{x}\right)  \mu(d\mathbf{x})\leq
	\int_{\mathbb{R}^{d}\times\mathbb{R}^{d}}\left(  V\left(  \mathbf{x}\right)
	+V\left(  \mathbf{y}\right)  +W\left(  \mathbf{x},\mathbf{y}\right)  \right)
	\mu\left(  d\mathbf{x}\right)  \mu\left(  d\mathbf{y}\right)
	+C.\label{estimateone}%
	\end{equation}
	Since $\mu$ has no atoms, by Lemma \ref{approx} there exists a sequence
	$\mathbf{x}^{n}\in\mathbb{R}_{\neq}^{dn}$, $n\in\mathbb{N}$, such that
	$L_{n}:=L(\mathbf{x}^{n},\cdot)$ has support in $B(0,R)$, and $L_{n}%
	\overset{w}{\rightarrow}\mu$ and ${\mathcal{J}}_{\neq}(L_{n})\rightarrow
	{\mathcal{J}}(\mu)$ as $n\rightarrow\infty$. Therefore, by Assumption
	\ref{C}\ref{C1} (resp.\ Assumption \ref{C}\ref{C3}), and in particular
	(\ref{C12}), there exists $n_{0}\in\mathbb{N}$, such that $n\geq n_{0}$
	implies $r_{n}>R$, and hence%
	\begin{align*}
	\int_{A\cap B(0,R)}\gamma(\mathbf{x})L_{n}(d\mathbf{x})  & \leq\int_{(A\cap
		B(0,R)\times A\cap B(0,R))_{\neq}}\left(  V\left(  \mathbf{x}\right)
	+V\left(  \mathbf{y}\right)  +W\left(  \mathbf{x},\mathbf{y}\right)  \right)
	L_{n}(d\mathbf{x})L_{n}(d\mathbf{y})+C\\
	& \leq\int_{(\mathbb{R}^{d}\times\mathbb{R}^{d})_{\neq}}\left(  V\left(
	\mathbf{x}\right)  +V\left(  \mathbf{y}\right)  +W\left(  \mathbf{x}%
	,\mathbf{y}\right)  \right)  L_{n}(d\mathbf{x})L_{n}(d\mathbf{y})+C+|c|\\
	& ={\mathcal{J}}_{\neq}(L_{n})+C+|c|,
	\end{align*}
	where $c$ is the lower bound in Assumption \ref{A}. Combining this with the
	lower semicontinuity of $\gamma$, the fact that $L_{n}\overset{w}{\rightarrow
	}\mu$ and ${\mathcal{J}}_{\neq}(L_{n})\rightarrow{\mathcal{J}}(\mu)$ as
	$n\rightarrow\infty$, we see that
	\begin{align}
	\int_{A}\gamma(\mathbf{x})\mu(d\mathbf{x})=\int_{A\cap B(0,R)}\gamma
	(\mathbf{x})\mu(d\mathbf{x})  & \leq\liminf_{n\rightarrow\infty}\int_{A\cap
		B(0,R)}\gamma(\mathbf{x})L_{n}(d\mathbf{x})\nonumber\label{estimatetwo}\\
	& \leq\lim_{n\rightarrow\infty}{\mathcal{J}}_{\neq}(L_{n})+C+|c|\nonumber\\
	& ={\mathcal{J}}(\mu)+C+|c|.
	\end{align}
	Together, \eqref{estimateone} and \eqref{estimatetwo} imply there exists
	$C^{\prime}>0$, $C_{1}\in\mathbb{R}$ such that \eqref{toshow} holds for any
	$\mu\in{\mathcal{P}}(\mathbb{R}^{d})$ with compact support and without atoms.
	%		\begin{equation}\label{tel}\begin{split}&\int_{\mathbb{R}^{d}}\gamma\left(\mathbf{x}\right)\mu\left(d\mathbf{x}\right)\leq C'\int_{\mathbb{R}^{d}\times\mathbb{R}^{d}}\left(   V\left(  \mathbf{x}\right)  +V\left(
	%		\mathbf{y}\right)    +W\left(  \mathbf{x},\mathbf{y}\right)  \right)
	%		\mu\left(  d\mathbf{x}\right)  \mu\left(  d\mathbf{y}\right) +C_1.\end{split}\end{equation}
	For general $\mu\in\mathcal{P}(\mathbb{R}^{d})$ without atoms, we define the
	sequence of measures $\mu_{N}(\cdot)=\frac{\mu(\cdot\cap B(0,N))}{\mu
		(B(0,N))},$ $N\in\mathbb{N}$, each of which has compact support and is without
	atoms. The relation \eqref{toshow} holds for each $N\in\mathbb{N},$ from which
	we obtain
	\[
	C^{\prime}\displaystyle\frac{\int_{\mathbb{R}^{d}}I_{B(0,N)}(\mathbf{x}%
		)\gamma\left(  \mathbf{x}\right)  \mu\left(  d\mathbf{x}\right)  }%
	{{\mu(B(0,N))}}+C_{1}\leq\displaystyle\frac{\int_{\mathbb{R}^{d}%
			\times\mathbb{R}^{d}}I_{B(0,N)}(\mathbf{x})I_{B(0,N)}(\mathbf{y})\left(
		V\left(  \mathbf{x}\right)  +V\left(  \mathbf{y}\right)  +W\left(
		\mathbf{x},\mathbf{y}\right)  \right)  \mu\left(  d\mathbf{x}\right)
		\mu\left(  d\mathbf{y}\right)  }{\mu^{2}(B(0,N))}.
	\]
	The integrands in the last inequality are bounded from below. Therefore,
	without loss of generality, we can assume that they are actually positive
	since otherwise we can just add and subtract their respective infima. By
	applying the monotone convergence theorem in the last relation, it follows
	that \eqref{toshow} holds for any $\mu\in{\mathcal{P}}(\mathbb{R}^{d})$
	without atoms.
	
	Finally, fix an arbitrary $\mu\in{\mathcal{P}}(\mathbb{R}^{d})$. Assume
	without loss of generality that ${\mathcal{J}}(\mu)<\infty$, for if not,
	\eqref{toshow} holds trivially. Then by Assumption \ref{C}\ref{C2}, there
	exists a sequence $\{\mu_{n}\}\subset{\mathcal{P}}(\mathbb{R}^{d})$ such that
	each $\mu_{n}$ is absolutely continuous with respect to the measure $\ell$
	(and consequently, non-atomic since $\ell$ is non-atomic), $\mu_{n}\overset
	{w}{\rightarrow}\mu$ and ${\mathcal{J}}(\mu_{n})\rightarrow{\mathcal{J}}(\mu
	)$. Since, as shown above, \eqref{toshow} holds when $\mu$ is replaced with
	$\mu_{n}$ for each $n$, taking the limit inferior as $n\rightarrow\infty$ of
	both sides and using the fact that $\liminf_{n\rightarrow\infty}%
	\int_{\mathbb{R}^{d}}\gamma(\mathbf{x})\mu(d\mathbf{x})\leq\liminf
	_{n\rightarrow\infty}\int_{\mathbb{R}^{d}}\gamma(\mathbf{x})\mu_{n}%
	(d\mathbf{x})$ since $\gamma$ is lsc, it follows that \eqref{toshow} also
	holds for any $\mu\in{\mathcal{P}}(\mathbb{R}^{d})$. 
\end{proof}

\section{Proof of Theorem \protect\ref{nspeed}}

% $\alpha_{n}=\beta_{n}=n$}

\label{sec:4}

Throughout this section, we assume that Assumption \ref{A} is satisfied. To
establish the LDP stated in Theorem \ref{nspeed}, by \cite[Theorem 1.2.3]%
{Dupuis} we can equivalently verify the Laplace principle. For any
probability measure $P$, we use $\mathbb{E}_{P}$ to denote the corresponding
expectation, and for conciseness denote $\mathbb{E}_{\mathbb{P}}$ by $%
\mathbb{E}$. In view of the rate function property of $\mathcal{I}_{\star }$
and $\mathcal{I}_{\star }^{\psi }$ already established in Lemmas \ref{lsc}
and \ref{lem-ratepsi}, it suffices to show the following: 
%Let $\mathbb{E}_{Q_{n}}\left[  e^{-nf}\right]  $ denote
%$\int_{\mathcal{P}(S)}e^{-nf(\mu)}Q_{n}(d\mu)$.
for any bounded and continuous function $f$ on $S$, the Laplace principle 
\begin{equation}
\lim_{n\rightarrow \infty }-\frac{1}{n}\log \mathbb{E}_{Q_{n}}\left[ e^{-nf}%
\right] =\inf_{\mu \in S}\left\{ f\left( \mu \right) +\mathcal{I}_{\star
}\left( \mu \right) \right\} ,  \label{laplace-nspeed}
\end{equation}%
holds both for $S=\mathcal{P}(\mathbb{R}^{d})$ and (under the additional
condition stated as Assumption \ref{B}) with $S=\mathcal{P}_{\psi }(\mathbb{R%
}^{d})$ and $\mathcal{I}_{\star }$ replaced by $\mathcal{I}_{\star }^{\psi }$%
.

\begin{remark}{(Completeness of $S$ is not necessary.)}
	\emph{While the statement of \cite[Theorem 1.2.3]{Dupuis} assumes
		completeness of the space $S$, a review of the proof shows that this
		property is not needed (though compactness of the level sets of }$\mathcal{I}%
	_{\star}$\emph{\ is used).}
\end{remark}

To establish the bound \eqref{laplace-nspeed}, we first express $-\frac{1}{n}%
\log \mathbb{E}_{Q_{n}}\left[ e^{-nf}\right] $ in terms of a variational
problem (equivalently, a stochastic control problem). We then prove
tightness of nearly minimizing controls, and finally prove convergence of
the values of the corresponding controlled problems to the value of the
limiting variational problem. The last step is reminiscent of the notion of $%
\Gamma $-convergence that is often used for analyzing variational problems
in the analysis community. For a nice exposition of the relationship between
LDPs and $\Gamma $-convergence, the reader is referred to \cite{Mariani2012}.

\subsection{Representation formula}

\label{subs-repth1}

Recall that $P_{n}$ is the probability measure $\mathbb{R}^{dn}$ defined in %
\eqref{tometro} and $Q_{n}$ is the push forward of $P_{n}$ under $L_{n}.$
Let $a\in \lbrack 0,1)$ be the constant in Assumption \ref{A} and let $%
P_{n}^{\star }$ be the measure on $\mathbb{R}^{nd}$ defined by 
\begin{equation}
P_{n}^{\star }\left( d\mathbf{x}_{1},\ldots ,d\mathbf{x}_{n}\right)
:=e^{-\sum_{i=1}^{n}(1-a)V\left( \mathbf{x}_{i}\right) }\ell (d\mathbf{x}%
_{1})\cdots \ell (d\mathbf{x}_{n}),  \label{def-pnstar}
\end{equation}%
and note that it is a probability measure due to Assumption \ref{A} and
Remark \ref{rem-A}. Let $\mathcal{J}_{a,\neq }$ be defined as in %
\eqref{Janeq}: 
% \footnote{KR: I thought it would be useful to repeat the definition here; but we could also omit to save space.} 
%  \begin{equation}\label{Janeq}
% \[ 
$\mathcal{J}_{a,\neq }(\mu )=\frac{1}{2}\int_{(\mathbb{R}^{d}\times \mathbb{R%
	}^{d})_{\neq }}\left( W(\mathbf{x},\mathbf{y})+aV(\mathbf{x})+aV(\mathbf{y}%
)\right) \mu (d\mathbf{x})\mu (d\mathbf{y}).$ % \] 
% \end{equation}
When $\beta _{n}=n$, using \eqref{def-pnstar}, \eqref{tometro} and %
\eqref{def-Hnequiv} to calculate $dP_{n}/dP_{n}^{\ast }$, we see that 
% , when $\beta_n = n$, we have 
% \[ \frac{dP_n}{dP_n^{\ast}} (\mathbf{x}^n)  =
% \frac{1}{Z_n}
% e^{-n\left[ \mathcal{J}_{a,\neq}(L_{n}(\mathbf{x}^{n}; \cdot)) + \frac{a}{n}\mathcal{V}(L_{n}(\mathbf{x}^{n};\cdot))\right]}. 
% \]
for any measurable function $f$ on $\mathcal{P}(\mathbb{R}^{d})$ (or on $%
\mathcal{P}_{\psi }(\mathbb{R}^{d})$), we have 
\begin{equation}
-\frac{1}{n}\log \mathbb{E}_{Q_{n}}\left[ e^{-nf}\right] =-\frac{1}{n}\log 
\mathbb{E}_{P_{n}}\left[ e^{-nf\circ L_{n}}\right] =-\frac{1}{n}\log \mathbb{%
	E}_{P_{n}^{\star }}\left[ \frac{1}{Z_{n}}e^{-n(f+\mathcal{J}_{a,\neq }+\frac{%
		a}{n}\mathcal{V})\circ L_{n}}\right] ,  \label{eq-split}
\end{equation}%
where $Z_{n}$ is the normalizing constant defined in \eqref{Zndef} and ${%
	\mathcal{V}}$ is the functional defined in \eqref{Vdef}. 
% Now,  note that for $\mathbf{x}^{n}\in(\mathbb{R}^{nd})_{\neq},$ we have
% \[\mathcal{J}_{a,\neq}(L_{n}(\mathbf{x}^{n}; \cdot)) + \frac{a}{n}\mathcal{V}(L_{n}(\mathbf{x}^{n};\cdot))=\frac{a}{n}\sum_{i=1}^{n}V\left(\mathbf{x}_{i}\right) + \frac{1}{2n^{2}}\sum_{i,j=1, i \neq j}^{n}W\left( \mathbf{x}_{i},\mathbf{x}_{j}\right).\]

We next state a representation for the quantity on the right-hand side of %
\eqref{eq-split}. To avoid confusion with the original distributions and
random variables, we use an overbar (e.g., $\bar{L}_{n}$) for quantities
that will appear in the representation, and refer to them as
\textquotedblleft controlled\textquotedblright\ versions. Given a
probability measure $\bar{P}^{n}\in {\mathcal{P}}(\mathbb{R}^{dn})$, 
% (d\mathbf{x}_{1},\ldots,d\mathbf{x}%_{n})$ on $\mathbb{R}^{nd}$, 
we can factor it into conditional distributions in the following manner: 
\begin{equation*}
\bar{P}^{n}(d\mathbf{x}_{1},\ldots ,d\mathbf{x}_{n})={\bar{P}}_{\{1\}}^{n}(d%
\mathbf{x}_{1}){\bar{P}}_{\{2\}|\{1\}}^{n}(d\mathbf{x}_{2}|\mathbf{x}%
_{1})\cdots {\bar{P}}_{\{n\}|\{1,..,n-1\}}^{n}(d\mathbf{x}_{n}|\mathbf{x}%
_{1},\ldots ,\mathbf{x}_{n-1}),
\end{equation*}%
where for $i=1,...,n,\,\bar{P}_{\{i\}|\{1,...,i-1\}}(\cdot |\mathbf{x}%
_{1},...,\mathbf{x}_{i-1})$ denotes the conditional distribution of the $i$%
-th marginal given $\mathbf{x}_{1},...,\mathbf{x}_{i-1}.$ Thus, if $\{\bar{%
	\boldsymbol{X}}_{j}^{n}\}_{1\leq j\leq n}$ are random variables with joint
distribution $\bar{P}^{n}(d\mathbf{x}_{1}\cdots d\mathbf{x}_{n})$ on some
probability space $(\Omega ,\mathcal{F},\mathbb{P})$, then $\bar{\mu}_{i}^{n}
$, the conditional distribution of $\bar{\mathbf{X}}_{i}^{n}$ given $\bar{%
	\mathbf{X}}_{1}^{n},\dots ,\bar{\mathbf{X}}_{i-1}^{n}$, can be expressed as 
\begin{equation}
\bar{\mu}_{i}^{n}(d\mathbf{x}_{i}):={\bar{P}}_{\{i\}|\{1,...,i-1\}}^{n}(d%
\mathbf{x}_{i}|\mathbf{\bar{X}}_{1}^{n},\dots ,\mathbf{\bar{X}}_{i-1}^{n}).
\label{Page 4}
\end{equation}%
Note that $\bar{\mu}_{i}^{n}$, $1\leq i\leq n$, are random probability
measures, and the $i$th measure is measurable with respect to the $\sigma $%
-algebra generated by $\{\bar{\boldsymbol{X}}_{j}^{n}\}_{j<i}.$ We refer to
the collection $\{\bar{\mu}_{i}^{n},1\leq i\leq n\}$ as a control, and let ${%
	\bar{L}}_{n}(\cdot )=L_{n}(\bar{\mathbf{X}}^{n};\cdot )$, with $L_{n}$
defined by \eqref{Ln}, be the (random) empirical measure of $\{\bar{%
	\boldsymbol{X}}_{j}^{n}\}_{1\leq j\leq n},$ which we refer to as the
controlled empirical measure.

Let $f$ belong to the space of functions on $\mathcal{P}(\mathbb{R}^{d})$
(or $\mathcal{P}_{\psi }(\mathbb{R}^{d})$) such that the map $\mathbf{x}%
^{n}\mapsto $ $f(L_{n}(\mathbf{x}^{n};\cdot ))$ from $\mathbb{R}^{nd}$ to $%
\mathbb{R}$ is measurable and bounded from below. This space clearly
includes all bounded continuous functions on $\mathcal{P}(\mathbb{R}^{d})$
(respectively, $\mathcal{P}_{\psi }(\mathbb{R}^{d})).$ Then, since the
functional $\mathcal{J}_{a,\neq }$ is also measurable and bounded from below
(due to Assumption \ref{B}), we can apply \cite[Proposition 4.5.1]{Dupuis}
to the function $\mathbf{x}^{n}\in \mathbb{R}^{d}\mapsto f(L_{n}(\mathbf{x}%
^{n};\cdot ))+\mathcal{J}_{a,\neq }(L_{n}(\mathbf{x}^{n};\cdot )),$ to
obtain 
\begin{equation}
-\frac{1}{n}\log \mathbb{E}_{P_{n}^{\star }}\!\left[ e^{-n(f+\mathcal{J}%
	_{a,\neq }+\frac{a}{n}\mathcal{V})\circ L_{n}}\!\right] {=}\inf_{\{\bar{\mu}%
	^{n}\}}\!\mathbb{E}\!\left[ f\left( {\bar{L}}_{n}\right) +\mathcal{J}%
_{a,\neq }(\bar{L}_{n})+\frac{a}{n}\mathcal{V}\left( {\bar{L}}_{n}\right) +%
\mathcal{R}\left( \bar{P}^{n}|\otimes _{n}e^{-(1-a)V}\ell \right) \right] ,
\end{equation}%
where $\bar{L}_{n}$ is the controlled empirical measure associated with $%
\bar{P}^{n}$ as defined above, and the infimum is over all controls $\{\bar{%
	\mu}_{i}^{n}\}$ defined in terms of some joint distribution $\bar{P}^{n}\in {\mathcal{P}}(\mathbb{R}^{dn})$ via \eqref{Page 4}. 
Factoring $\bar{P}^{n}$ as above and using the chain rule for relative
entropy (see \cite[Theorem B.2.1]{Dupuis}), we then have 
\begin{equation}
{-}\frac{1}{n}\!\log \mathbb{E}_{P_{n}^{\star }}\!\left[ e^{-n(f+\mathcal{J}%
	_{a,\neq }+\frac{a}{n}\mathcal{V})\circ L_{n}}\!\right] {=}\inf_{\{\bar{\mu}%
	_{i}^{n}\}}\!\mathbb{E}\!\left[ f\left( {\bar{L}}_{n}\right) +\mathcal{J}%
_{a,\neq }\left( {\bar{L}}_{n}\right) +\frac{a}{n}\mathcal{V}\left( {\bar{L}}%
_{n}\right) +\frac{1}{n}\!\sum_{i=1}^{n}\!\mathcal{R}\left( \bar{\mu}%
_{i}^{n}|e^{-(1-a)V}\ell \right) \!\right] \!\!,  \label{representation}
\end{equation}%
where the infimum is over all controls $\{\bar{\mu}_{i}^{n}\}$
(equivalently, joint distributions $\bar{P}^{n}$ $\in {\mathcal{P}}(\mathbb{R%
}^{nd})$). Also, setting $f=0$ in (\ref{representation}) and recalling the
definition of $Z_{n}$ from \eqref{Zndef} gives 
\begin{equation}
-\frac{1}{n}\!\log \!Z_{n}{=}-\!\frac{1}{n}\!\log \mathbb{E}_{P_{n}^{\star
}}\!\left[ \!e^{-n\mathcal{J}_{a,\neq }(L_{n})+\frac{a}{n}\mathcal{V}\left(
	L_{n}\right) }\right] \!{=}\!\inf_{\{\bar{\mu}_{i}^{n}\}}\!\mathbb{E}\!\left[
\!\mathcal{J}_{a,\neq }\!\left( {\bar{L}}_{n}\right) {+}\frac{a}{n}\mathcal{V%
}\!\left( {\bar{L}}_{n}\right) +\!\frac{1}{n}\!\sum_{i=1}^{n}\!\mathcal{R}%
\left( \bar{\mu}_{i}^{n}|e^{-(1-a)V}\ell \right) \!\!\right] \!\!.
\label{rep-zn}
\end{equation}%
We claim that to prove Theorem \ref{nspeed}, it suffices to show that for
every bounded and continuous (in the respective topology) function $f$, the
lower bound 
\begin{equation}
\liminf_{n\rightarrow \infty }\inf_{\{\bar{\mu}_{i}^{n}\}}\mathbb{E}\left[
f\left( \bar{L}_{n}\right) +\mathcal{J}_{a,\neq }\left( \bar{L}_{n}\right) +%
\frac{a}{n}\mathcal{V}\left( {\bar{L}}_{n}\right) +\frac{1}{n}\sum_{i=1}^{n}%
\mathcal{R}\left( \bar{\mu}_{i}^{n}|e^{-V}\ell \right) \right] \geq
\inf_{\mu \in S}\{f\left( \mu \right) +\mathcal{I}\left( \mu \right) \}
\label{th1-lb}
\end{equation}%
and upper bound 
\begin{equation}
\limsup_{n\rightarrow \infty }\inf_{\{\bar{\mu}_{i}^{n}\}}\mathbb{E}\left[
f\left( \bar{L}_{n}\right) +\mathcal{J}_{a,\neq }\left( \bar{L}_{n}\right) +%
\frac{a}{n}\mathcal{V}\left( {\bar{L}}_{n}\right) +\frac{1}{n}\sum_{i=1}^{n}%
\mathcal{R}\left( \bar{\mu}_{i}^{n}|e^{-V}\ell \right) \right] \leq
\inf_{\mu \in S}\{f\left( \mu \right) +\mathcal{I}\left( \mu \right) \}
\label{th1-ub}
\end{equation}%
hold. Indeed, when combined with \eqref{representation}, \eqref{rep-zn} and %
\eqref{eq-split}, these bounds imply the desired limit \eqref{laplace-nspeed}%
. The lower and upper bounds are established in Sections \ref{subs-lbth1}
and \ref{Upn}, respectively. First, in Section \ref{subs-tightprops}, we
establish some tightness properties of the controls that will be used in the
proofs of these bounds.

\subsection{Properties of the controls}

\label{subs-tightprops}

We continue to use the notation for the controls introduced in the previous
section. We start with a simplifying observation.

\begin{remark}
	\label{tightremark} \emph{In the proof of the lower bound \eqref{th1-lb}, we
		can assume that there exists $C_{0}<\infty$ such that 
		\begin{equation}
		\sup_{n\in\mathbb{N}}\inf_{\{\bar{\mu}_{i}^{n}\}}\mathbb{E}\left[ \mathcal{J}
		_{a,\neq}\left( \bar{L}_{n}\right) +\frac{a}{n}\mathcal{V}\left( {\bar{L}}%
		_{n}\right) +\frac{1}{n}\sum_{i=1}^{n}\mathcal{R}\left( \bar{\mu}%
		_{i}^{n}|e^{-(1-a)V}\ell\right) \right] \leq C_{0}.  \label{idontknowa}
		\end{equation}
		If this were not true, we could restrict to a subsequence that has such a
		property, because for any subsequence for which the left-hand side of %
		\eqref{idontknowa} is infinite, the lower bound \eqref{th1-lb} is satisfied
		by default. Furthermore, since under Assumption \ref{A}, $\mathcal{J}
		_{a,\neq}>\min\{0,2c\}$, we can restrict to controls for which the relative
		entropy cost is bounded by $C_{0}+2|c|$: that is, for which 
		\begin{equation}
		\sup_{n}\mathbb{E}\left[ \frac{1}{n}\sum_{i=1}^{n}\mathcal{R}\left( \bar {\mu%
		}_{i}^{n}|e^{-(1-a)V}\ell\right) \right] \leq C_{0}+2|c|.
		\label{controlingcontrols}
		\end{equation}
	}
\end{remark}

\begin{lemma}
	\label{aaa} Let $V$ satisfy Assumption \ref{A}, let $\{\bar{\mu }_{i}^{n}\},
	n\in\mathbb{N},$ be a sequence of controls for which %
	\eqref{controlingcontrols} holds, let $\bar{L}^{n}$ be the associated
	sequence of controlled empirical measures and let 
	\begin{equation}  \label{hat}
	\hat{\mu}_{n}:=\frac{1}{n}\sum_{i=1}^{n}\bar{\mu}_{i}^{n}.
	\end{equation}
	Then $\left\{ \left( {\bar{L}}_{n},\hat{\mu}_{n}\right) ,n\in \mathbb{N}%
	\right\} $ is tight as a sequence of $\mathcal{P}(\mathbb{R}^{d})\times%
	\mathcal{P}(\mathbb{R}^{d})$-valued random elements.
\end{lemma}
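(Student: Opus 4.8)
The plan is to prove tightness of the pair by proving tightness of each coordinate, $\{\bar{L}_n\}$ and $\{\hat\mu_n\}$, separately: for random elements valued in a product $S_1\times S_2$ of metric spaces, the sequence is tight if and only if each coordinate sequence is tight (take products of compact sets in one direction, projections in the other). By Lemma \ref{expectation} it will in fact suffice to control the mean measures $\mathbb{E}[\bar{L}_n]$ and $\mathbb{E}[\hat\mu_n]$ in $\mathcal{P}(\mathbb{R}^d)$, and the first thing I would record is that these two means coincide. Indeed, since $\bar\mu_i^n$ is by construction the conditional law of $\bar{\mathbf{X}}_i^n$ given $\bar{\mathbf{X}}_1^n,\dots,\bar{\mathbf{X}}_{i-1}^n$, the tower property gives $\mathbb{E}[\delta_{\bar{\mathbf{X}}_i^n}]=\mathbb{E}[\bar\mu_i^n]$ as elements of $\mathcal{P}(\mathbb{R}^d)$, whence
\[
\mathbb{E}[\bar{L}_n]=\frac1n\sum_{i=1}^n\mathbb{E}[\delta_{\bar{\mathbf{X}}_i^n}]=\frac1n\sum_{i=1}^n\mathbb{E}[\bar\mu_i^n]=\mathbb{E}[\hat\mu_n].
\]

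Next I would use the relative-entropy cost bound \eqref{controlingcontrols} to control $\{\hat\mu_n\}$. Because $\mu\mapsto\mathcal{R}(\mu|e^{-V}\ell)$ is convex, the definition \eqref{hat} of $\hat\mu_n$ gives the pathwise inequality $\mathcal{R}(\hat\mu_n|e^{-V}\ell)\le\frac1n\sum_{i=1}^n\mathcal{R}(\bar\mu_i^n|e^{-V}\ell)$, so taking expectations and invoking \eqref{controlingcontrols},
\[
\sup_{n}\mathbb{E}\left[\mathcal{R}\left(\hat\mu_n\,\big|\,e^{-V}\ell\right)\right]\le C_0+|c|<\infty.
\]
Here $e^{-V}\ell$ is a probability measure by Assumption \ref{A}\ref{A2}, so $\mathcal{R}(\cdot|e^{-V}\ell)$ is a tightness function on $\mathcal{P}(\mathbb{R}^d)$ (it is lsc with compact level sets, as recalled in the proof of Lemma \ref{lsc}). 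Applying Lemma \ref{tightness} with $S=\mathcal{P}(\mathbb{R}^d)$, the collection $\{\Law(\hat\mu_n)\}\subset\mathcal{P}(\mathcal{P}(\mathbb{R}^d))$, and tightness function $g=\mathcal{R}(\cdot|e^{-V}\ell)$, I conclude that $\{\hat\mu_n\}$ is tight. By Lemma \ref{expectation} this makes the means $\{\mathbb{E}[\hat\mu_n]\}$ tight in $\mathcal{P}(\mathbb{R}^d)$, hence by the identity above $\{\mathbb{E}[\bar{L}_n]\}$ is tight, and a second application of Lemma \ref{expectation} gives that $\{\bar{L}_n\}$ is tight. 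Combining the two coordinates yields tightness of $\{(\bar{L}_n,\hat\mu_n)\}$.

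I do not expect a serious obstacle here — this is a preparatory tightness statement rather than a sharp estimate — and the only two steps requiring a little care are (i) the convexity argument that transfers the entropy bound from the individual controls $\bar\mu_i^n$ to their average $\hat\mu_n$, and (ii) the identification $\mathbb{E}[\bar{L}_n]=\mathbb{E}[\hat\mu_n]$, which is precisely what lets one deduce tightness of the atomic empirical measures $\bar{L}_n$ (for which $\mathcal{R}(\bar{L}_n|e^{-V}\ell)=+\infty$ since $e^{-V}\ell$ is non-atomic) from the entropy bound available only for the smoother averaged control.
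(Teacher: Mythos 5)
Your argument is correct and is essentially the paper's own proof: convexity of relative entropy plus \eqref{controlingcontrols} gives $\sup_n\mathbb{E}[\mathcal{R}(\hat\mu_n|e^{-V}\ell)]<\infty$, the tightness-function property of $\mathcal{R}(\cdot|e^{-V}\ell)$ together with Lemma \ref{tightness} yields tightness of $\{\hat\mu_n\}$, and the identity $\mathbb{E}[\bar{L}_n]=\mathbb{E}[\hat\mu_n]$ (the tower-property computation in \eqref{mutol}) combined with two applications of Lemma \ref{expectation} transfers this to $\{\bar{L}_n\}$, coordinatewise tightness then giving tightness of the pair. No gaps; this matches the paper's route step for step.
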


\begin{proof}
	Let $\{\bar{\mu}_{i}^{n}\}, n \in \mathbb{N},$ be a sequence of controls that satisfies
	\eqref{controlingcontrols}. By the convexity of relative entropy and Jensen's
	inequality
	\[
	\sup_{n}\mathbb{E}\left[  \mathcal{R}\left(  \hat{\mu}^{n}|e^{-(1-a)V}\ell\right)
	\right]  <\infty.
	\]
	We know that $\mathcal{R}\left(  \cdot|e^{-(1-a)V}\ell\right)  $ is a tightness
	function on $\mathcal{P}\left(  \mathbb{R}^{d}\right)  $
	%$(\mathcal{P}\left(  \mathbb{R}^{d}\right)  ,d_{w}),$
	and hence, by Lemma \ref{tightness}, the sequence of random
	probability measures $\{\hat{\mu}_{n},n\in\mathbb{N}\}$ is tight.
	%on $\mathcal{P}\left(  \mathbb{R}^{d}\right)$.
	%$(\mathcal{P}\left(  \mathbb{R}^{d}\right)  ,d_{w})$.
	By Lemma \ref{expectation}, the sequence of probability measures
	$\{\mathbb{E}[\hat{\mu}_{n}],\,n\in\mathbb{N}\}$ is tight.
	%in $\mathbb{R}^d$.
	Since $\bar{\mu}_{i}^{n}$ is the conditional distribution of $\mathbf{\bar{X}%
	}_{i}^{n}$ given $(\bar{\mathbf{X}}_{1}^{n},...,\bar{\mathbf{X}}_{i-1}^{n})$,
	for any measurable function $g:\mathbb{R}^{d}\mapsto\mathbb{R}$ that is
	bounded from below, we have
	\begin{equation*}%
	\begin{split}
	\mathbb{E}\left[  \int_{\mathbb{R}^{d}}g\left(  \mathbf{x}\right)  {\bar{L}%
	}_{n}\left(  d\mathbf{x}\right)  \right]   &  =\mathbb{E}\left[  \frac{1}%
	{n}\sum_{i=1}^{n}g\left(  \mathbf{\bar{X}}_{i}^{n}\right)  \right]  =\mathbb{E}\left[  \frac{1}{n}\sum_{i=1}^{n}\int_{\mathbb{R}^{d}}g\left(
	\mathbf{x}\right)  \bar{\mu}_{i}^{n}\left(  d\mathbf{x}\right)  \right]
	=\mathbb{E}\left[  \int_{\mathbb{R}^{d}}g\left(  \mathbf{x}\right)  \hat{\mu
	}_{n}\left(  d\mathbf{x}\right)  \right]  .
	\end{split}
	% \label{mutol}%
	\end{equation*} 
	Thus, $\mathbb{E}\left[  {\bar{L}}_{n}\right]  =\mathbb{E}\left[  \hat{\mu
	}_{n}\right]  ,$ and so $\{ \mathbb{E} [ \bar{L}_n], n \in
	\mathbb{N}\}$ is also tight.  Another application of Lemma
	\ref{expectation} then shows that $\{{\bar{L}}_{n},n\in\mathbb{N}\},$ is
	tight, which  together with the tightness of 
	$\{\hat{\mu}^{n}\}$ established above,  implies 
	$\left\{  \left(  \hat{\mu}_{n},{\bar{L}}_{n}\right)  ,n\in \mathbb{N}\right\}  $ is tight.
\end{proof}

The following lemma, which uses an elementary martingale argument, appears
in \cite{Dupuis}. For the reader's convenience the proof is given in
Appendix \ref{sec-apC}.

\begin{lemma}
	\label{lem:weak_limit_Sanov} Suppose $\bar{L}_{n},\hat{\mu}_{n}$, $n\in%
	\mathbb{N}$, are as defined in Lemma \ref{aaa} and further assume that $%
	\left\{ \left( \bar{L}_{n},\hat{\mu}_{n}\right) ,n\in\mathbb{N}\right\} $
	converges along a subsequence to $\left( \bar{L},\hat{\mu}\right) .$ Then $%
	\bar{L}=\hat{\mu}$ w.p.1.
\end{lemma}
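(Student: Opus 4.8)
The plan is to exploit the specific structure relating $\bar{L}_n$ and $\hat{\mu}_n$: although $\bar{L}_n = \frac{1}{n}\sum_{i=1}^n \delta_{\bar{\mathbf{X}}_i^n}$ and $\hat{\mu}_n = \frac{1}{n}\sum_{i=1}^n \bar{\mu}_i^n$ are not equal as random measures, the summand $\delta_{\bar{\mathbf{X}}_i^n} - \bar{\mu}_i^n$ is a martingale difference, since $\bar{\mu}_i^n$ is by definition the conditional law of $\bar{\mathbf{X}}_i^n$ given $\bar{\mathbf{X}}_1^n,\dots,\bar{\mathbf{X}}_{i-1}^n$. Thus for a fixed bounded measurable $g:\mathbb{R}^d \to \mathbb{R}$, the quantity $M_k^n \doteq \sum_{i=1}^k \left( g(\bar{\mathbf{X}}_i^n) - \int g \, d\bar{\mu}_i^n \right)$ is a martingale with bounded increments, so $\mathbb{E}[(M_n^n)^2] = \sum_{i=1}^n \mathbb{E}\left[ \left( g(\bar{\mathbf{X}}_i^n) - \int g \, d\bar{\mu}_i^n \right)^2 \right] \leq 4 n \|g\|_\infty^2$. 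Consequently
\[
\mathbb{E}\left[ \left( \int g \, d\bar{L}_n - \int g \, d\hat{\mu}_n \right)^2 \right] = \frac{1}{n^2} \mathbb{E}[(M_n^n)^2] \leq \frac{4\|g\|_\infty^2}{n} \longrightarrow 0.
\]

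The next step is to upgrade this to the statement $\bar L = \hat\mu$ w.p.1 for the subsequential limit. First I would fix a countable convergence-determining family $\{g_k\}_{k\in\mathbb{N}} \subset C_b(\mathbb{R}^d)$ for the weak topology on $\mathcal{P}(\mathbb{R}^d)$; such a family exists because $(\mathcal{P}(\mathbb{R}^d), d_w)$ is Polish. Along the given subsequence, $(\bar L_n, \hat\mu_n) \Rightarrow (\bar L, \hat\mu)$, so by the Skorokhod representation theorem I may assume (on a possibly enlarged probability space) that $(\bar L_n, \hat\mu_n) \to (\bar L, \hat\mu)$ almost surely in $\mathcal{P}(\mathbb{R}^d)\times\mathcal{P}(\mathbb{R}^d)$. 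Then for each $k$, $\int g_k \, d\bar L_n \to \int g_k \, d\bar L$ and $\int g_k \, d\hat\mu_n \to \int g_k \, d\hat\mu$ a.s.; combined with the $L^2$ (hence probability) convergence to zero of $\int g_k\,d\bar L_n - \int g_k\,d\hat\mu_n$ along that subsequence, we get $\int g_k \, d\bar L = \int g_k \, d\hat\mu$ a.s. Taking the intersection over the countably many $k$ shows $\int g_k \, d\bar L = \int g_k \, d\hat\mu$ for all $k$ simultaneously, w.p.1, and since $\{g_k\}$ is convergence-determining this forces $\bar L = \hat\mu$ w.p.1.

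I do not anticipate a serious obstacle here, since boundedness of $g$ makes the martingale estimate completely elementary and the Skorokhod reduction is standard; the only point requiring a modicum of care is the bookkeeping to pass from "$\int g_k\,d\bar L = \int g_k\,d\hat\mu$ a.s." for each fixed $k$ to a single almost-sure event on which equality holds for all $k$ — this is handled by taking a countable intersection, which is why one works with a countable convergence-determining family rather than all of $C_b(\mathbb{R}^d)$. One could alternatively avoid Skorokhod entirely: the map $(\nu_1,\nu_2) \mapsto \left|\int g_k\,d\nu_1 - \int g_k\,d\nu_2\right|$ is bounded and continuous on $\mathcal{P}(\mathbb{R}^d)^2$, so $\mathbb{E}\left[\left|\int g_k\,d\bar L - \int g_k\,d\hat\mu\right| \wedge 1\right] = \lim_n \mathbb{E}\left[\left|\int g_k\,d\bar L_n - \int g_k\,d\hat\mu_n\right| \wedge 1\right] = 0$ by weak convergence along the subsequence together with the $L^2$ bound, giving $\int g_k\,d\bar L = \int g_k\,d\hat\mu$ a.s. directly. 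Either route closes the argument; I would present the second, as it is marginally cleaner and matches the reference to an "elementary martingale argument" in \cite{dup5}.
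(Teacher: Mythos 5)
Your proposal is correct and follows essentially the same route as the paper's proof: the same martingale-difference orthogonality yields the $O(1/n)$ second-moment bound on $\int g\,d\bar L_n-\int g\,d\hat\mu_n$, and the conclusion is obtained by passing to the distributional limit with a countable family of bounded continuous test functions (the paper uses Chebyshev plus Fatou where you use the truncated expectation or Skorokhod, a cosmetic difference). No gaps.
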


For the next result, it will be convenient to first define a collection of
auxiliary random measures that extend the ones that appear in the
representation (\ref{representation}). Let $\bar{P}^{n}$ be a probability
measure on $\mathbb{R}^{dn}$, and let $(\bar{\mathbf{X}}_{1}^{n},\ldots ,%
\bar{\mathbf{X}}_{n}^{n})$ be random variables with joint distribution $\bar{%
	P}^{n}$. For $J\subset\{1,...,n\},$ let $\bar{P}_{J}^{n}$ equal the marginal
distribution of $\bar{P}^{n}$ on $\{\mathbf{x}_{j},j\in J\}$, and for
disjoint subsets $I_{1}$ and $I_{2}$ of $\left\{ 1,\ldots,n\right\} $, let $%
\bar {P}_{I_{1}|I_{2}}^{n}$ denote the stochastic kernel defined as follows: 
\begin{equation*}
\bar{P}_{I_{1}|I_{2}}^{n}(d\mathbf{x}_{i},i\in I_{1}|\mathbf{x}_{k},k\in
I_{2})\bar{P}_{I_{2}}^{n}(d\mathbf{x}_{k},k\in I_{2})=\bar{P}_{I_{1}\cup
	I_{2}}^{n}(d\mathbf{x}_{j},j\in I_{1}\cup I_{2}).
\end{equation*}
Let $K_{k}:=\{1,\ldots,k-1\}.$ In the sequel we fix $i<j$ (the case $j<i$
can be handled in a symmetric way), and define 
\begin{equation}
\bar{\mu}_{ij}^{n}(d\mathbf{x}_{i}d\mathbf{x}_{j}):=\bar{P}%
_{\{i,j\}|K_{i}}^{n}(d\mathbf{x}_{i}d\mathbf{x}_{j}|\mathbf{\bar{X}}%
_{k}^{n},k\in K_{i}).  \label{bar-muijn}
\end{equation}
Also, note that with this notation 
\begin{equation}  \label{barmuin}
\bar{\mu}_{i}^{n}(d\mathbf{x}_{i})={\bar{P}}_{\{i\}|K_{i}}^{n}(d\mathbf{x}%
_{i}|\mathbf{\bar{X}}_{1}^{n},\dots,\mathbf{\bar{X}}_{i-1}^{n})
\end{equation}
are the controls used in the representation (\ref{representation}). We claim
that 
\begin{equation}
\pi_{\#}^{1}\bar{\mu}_{ij}^{n}=\bar{\mu}_{i}^{n}\quad\mbox{ and }\quad\pi
_{\#}^{2}\bar{\mu}_{ij}^{n}=\mathbb{E}[\bar{\mu}_{j}^{n}|\mathbf{\bar{X}}%
_{k}^{n},k\in K_{i}],  \label{pi-claim}
\end{equation}
where $\pi^{k},k=1,2,$ and $\#$ are the projection and push-forward
operators introduced in Definition \ref{projection} and Definition \ref{push}%
. The first relation in \eqref{pi-claim} is an immediate consequence of the
definitions of $\bar{\mu}_i^n$ and $\bar {\mu}_{ij}^{n}.$ Due to the
asymmetry in the first and second (equivalently, $i$ and $j$) coordinates in
the definition of $\bar{\mu}_{ij}^n$ in \eqref{bar-muijn}, the proof of the
second equality in \eqref{pi-claim} is a little more involved. Indeed, note
that for every $A \subset {\mathcal{B}}(\mathbb{R}^d)$, 
% changed slightly to make it clearer what is being integrated over
\begin{align*}
& \pi_{\#}^{2}\bar{\mu}_{ij}^{n}(A) =\pi_{\#}^{2}\bar{P}_{\{i,j%
	\}|K_{i}}^{n}(A|\mathbf{\bar{X}}_{k}^{n},k\in K_{i})=\int\bar{P}%
_{\{j\}|K_{i+1}}^{n}(A|\mathbf{\bar{X}}_{1}^{n},...,\mathbf{\bar{X}}%
_{i-1}^{n},\mathbf{x}_{i})\bar{P}_{\{i\}|K_{i}}^{n}(d\mathbf{x}_{i}|\mathbf{%
	\bar{X}}_{k}^{n},k\in K_{i}) \\
& \quad=\int\bar{P}_{\{j\}|K_{j}}^{n}(A|\mathbf{\bar{X}}_{1}^{n},...,\mathbf{%
	\bar{X}}_{i-1}^{n},\mathbf{x}_{i},...,\mathbf{x}_{j-1})\bar{P}%
_{(K_{j}\setminus K_{i})|K_{i}}^{n}(d\mathbf{x}_{i}\cdots d\mathbf{x}_{j-1}|%
\mathbf{\bar{X}}_{1}^{n},...,\mathbf{\bar{X}}_{i-1}^{n}) \\
& \quad=\mathbb{E}[\bar{P}_{\{j\}|K_{j}}^{n}(A|\mathbf{\bar{X}}_{k}^{n},k\in
K_{j})|\mathbf{\bar{X}}_{k}^{n},k\in K_{i}]=\mathbb{E}[\bar{\mu}_{j}^{n}|%
\mathbf{\bar{X}}_{k}^{n},k\in K_{i}](A),
\end{align*}
\iffalse
\begin{align*}
& \pi_{\#}^{2}\bar{\mu}_{ij}^{n}(d\mathbf{x}_j) \\
& \quad=\pi_{\#}^{2}\bar{P}_{\{i,j\}|K_{i}}^{n}(d\mathbf{x}_{j}|\mathbf{\bar{%
		X}}_{k}^{n},k\in K_{i}) \\
& \quad=\int\bar{P}_{\{j\}|K_{i+1}}^{n}(d\mathbf{x}_{j}|\mathbf{\bar{X}}%
_{1}^{n},...,\mathbf{\bar{X}}_{i-1}^{n},\mathbf{x}_{i})\bar{P}%
_{\{i\}|K_{i}}^{n}(d\mathbf{x}_{i}|\mathbf{\bar{X}}_{k}^{n},k\in K_{i}) \\
& \quad=\int\bar{P}_{\{j\}|K_{j}}^{n}(d\mathbf{x}_{j}|\mathbf{\bar{X}}%
_{1}^{n},...,\mathbf{\bar{X}}_{i-1}^{n},\mathbf{x}_{i},...,\mathbf{x}_{j-1})%
\bar{P}_{(K_{j}\setminus K_{i})|K_{i}}^{n}(d\mathbf{x}_{i}\cdots d\mathbf{x}%
_{j-1}|\mathbf{\bar{X}}_{1}^{n},...,\mathbf{\bar{X}}_{i-1}^{n}) \\
& \quad=\mathbb{E}[\bar{P}_{\{j\}|K_{j}}^{n}(d\mathbf{x}_{j}|\mathbf{\bar{X}}%
_{k}^{n},k\in K_{j})|\mathbf{\bar{X}}_{k}^{n},k\in K_{i}] \\
& \quad=\mathbb{E}[\bar{\mu}_{j}^{n}|\mathbf{\bar{X}}_{k}^{n},k\in K_{i}](d%
\mathbf{x}_{j}),
\end{align*}
\fi 
from which the second equality in \eqref{pi-claim} follows.

\begin{lemma}
	\label{aaaa} For $\psi \in \Psi $ let $V$ and $W$ satisfy Assumptions \ref{A}
	and \ref{B}, and let $\{\bar{\mu}_{i}^{n}\},n\in \mathbb{N},$ be a sequence
	of controls for which 
	\begin{equation}
	\sup_{n\in \mathbb{N}}\mathbb{E}\left[ \mathcal{J}_{a,\neq }\left( \bar{L}%
	_{n}\right) +\frac{a}{n}\mathcal{V}\left( \bar{L}_{n}\right) +\frac{1}{n}%
	\sum_{i=1}^{n}\mathcal{R}\left( \bar{\mu}_{i}^{n}|e^{-V}\ell \right) \right]
	<\infty ,  \label{controlingcontrols2}
	\end{equation}%
	and let $\hat{\mu}_{n}$ be as defined in \eqref{hat}. Then $\left\{ \left( {%
		\bar{L}}_{n},\hat{\mu}_{n}\right) ,n\in \mathbb{N}\right\} $ is tight in $%
	\mathcal{P}_{\psi }(\mathbb{R}^{d})\times \mathcal{P}_{\psi }(\mathbb{R}%
	^{d}) $.
\end{lemma}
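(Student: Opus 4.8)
The plan is to leverage Assumption \ref{B}\ref{B2}, which under \eqref{controlingcontrols2} will allow us to produce a tightness function on $\mathcal{P}_\psi(\mathbb{R}^d)$ that is controlled in expectation, and then invoke Lemma \ref{tightness} and Lemma \ref{expectation} in the same spirit as the proof of Lemma \ref{aaa}. The crucial new ingredient is that we must now track the joint law of consecutive pairs $(\bar{\mathbf{X}}_i^n, \bar{\mathbf{X}}_j^n)$, because the right-hand side of \eqref{m} involves an infimum over couplings $\zeta \in \Pi(\mu,\mu)$ of $\mathfrak{W}(\zeta) + \mathcal{R}(\zeta \,|\, e^{-V}\ell \otimes e^{-V}\ell)$, and these pair-couplings are exactly what the auxiliary measures $\bar{\mu}_{ij}^n$ defined in \eqref{bar-muijn} encode.

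First I would form, for each pair $i < j$, the measure $\bar{\mu}_{ij}^n$ as in \eqref{bar-muijn}, and observe via \eqref{pi-claim} that its two marginals are $\bar{\mu}_i^n$ and $\mathbb{E}[\bar{\mu}_j^n \,|\, \bar{\mathbf{X}}_k^n, k \in K_i]$. Averaging $\bar{\mu}_{ij}^n$ over all ordered pairs (and symmetrizing to handle $j < i$) produces a random coupling $\hat{\zeta}_n \in \Pi(\hat\mu_n, \hat\mu_n)$ whose marginals are both $\hat\mu_n$ (using the tower property of conditional expectation and the arithmetic identity $\frac{1}{n^2}\sum_{i,j}(\cdots)$). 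The relative entropy of this coupling against $e^{-V}\ell \otimes e^{-V}\ell$ is controlled, by the chain rule and convexity of relative entropy, by $\frac{1}{n}\sum_{i=1}^n \mathcal{R}(\bar\mu_i^n \,|\, e^{-V}\ell)$ plus lower-order terms, and $\mathbb{E}[\mathfrak{W}(\hat\zeta_n)]$ is comparable to $\mathbb{E}[\mathcal{W}_{\neq}(\bar L_n)]$ up to diagonal corrections of order $1/n$ (which are bounded below by Assumption \ref{B}\ref{B1}). Hence \eqref{controlingcontrols2} gives a uniform bound $\sup_n \mathbb{E}[\mathfrak{W}(\hat\zeta_n) + \mathcal{R}(\hat\zeta_n \,|\, e^{-V}\ell \otimes e^{-V}\ell)] < \infty$. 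Applying Assumption \ref{B}\ref{B2} with $\mu = \hat\mu_n$ and the specific coupling $\hat\zeta_n$ (so that the infimum on the right of \eqref{m} is bounded above by the value at $\hat\zeta_n$) yields $\sup_n \mathbb{E}\left[\int_{\mathbb{R}^d}\phi(\psi(\mathbf{x}))\,\hat\mu_n(d\mathbf{x})\right] < \infty$ for the superlinear lsc $\phi$ furnished by the assumption.

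By Lemma \ref{tightnessfunction}, $\Phi(\mu) = \int \phi(\psi)\,d\mu$ is a tightness function on $\mathcal{P}_\psi(\mathbb{R}^d)$, so Lemma \ref{tightness} shows $\{\hat\mu_n\}$ is tight in $\mathcal{P}_\psi(\mathbb{R}^d)$; then Lemma \ref{expectation} (applied in $\mathcal{P}_\psi(\mathbb{R}^d)$) transfers this to tightness of $\{\mathbb{E}[\hat\mu_n]\}$. Since the identity $\mathbb{E}[\bar L_n] = \mathbb{E}[\hat\mu_n]$ from \eqref{mutol} holds for all $g$ bounded below—in particular for $g = \phi \circ \psi$ and for $g$ ranging over $C_b$—we get $\sup_n \mathbb{E}\big[\int \phi(\psi)\,d\bar L_n\big] < \infty$, so $\{\mathbb{E}[\bar L_n]\}$ is tight in $\mathcal{P}_\psi(\mathbb{R}^d)$, and Lemma \ref{expectation} again gives tightness of $\{\bar L_n\}$ there. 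Tightness of the pair $\{(\bar L_n, \hat\mu_n)\}$ in $\mathcal{P}_\psi(\mathbb{R}^d)\times\mathcal{P}_\psi(\mathbb{R}^d)$ follows since tightness of each coordinate implies tightness of the pair.

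The main obstacle I anticipate is the bookkeeping in the second step: constructing $\hat\zeta_n$ so that it is genuinely an element of $\Pi(\hat\mu_n,\hat\mu_n)$ and simultaneously controlling both $\mathbb{E}[\mathfrak{W}(\hat\zeta_n)]$ and $\mathbb{E}[\mathcal{R}(\hat\zeta_n \,|\, e^{-V}\ell\otimes e^{-V}\ell)]$ by the given cost. The marginal identities rely essentially on \eqref{pi-claim} and the tower property, but care is needed because $\bar\mu_{ij}^n$ is measurable only with respect to $\mathcal{F}_{K_i}$ (not $\mathcal{F}_{K_j}$), so the second marginal is $\mathbb{E}[\bar\mu_j^n \,|\, \mathcal{F}_{K_i}]$ rather than $\bar\mu_j^n$ itself; this is exactly why one averages and uses that $\mathbb{E}\big[\mathbb{E}[\bar\mu_j^n\,|\,\mathcal{F}_{K_i}]\big] = \mathbb{E}[\bar\mu_j^n]$, together with the convexity/chain-rule bound $\mathcal{R}(\hat\zeta_n \,|\, e^{-V}\ell\otimes e^{-V}\ell) \le \frac{1}{n^2}\sum_{i,j}\mathcal{R}(\bar\mu_{ij}^n \,|\, e^{-V}\ell\otimes e^{-V}\ell) \le \frac{2}{n}\sum_i \mathcal{R}(\bar\mu_i^n \,|\, e^{-V}\ell) + o(1)$. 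The diagonal terms ($i = j$) and the asymmetry between $i < j$ and $j < i$ contribute only $O(1/n)$ corrections that are harmless given the lower bounds in Assumption \ref{B}\ref{B1}.
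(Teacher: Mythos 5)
Your overall architecture is the paper's: build the pair measures $\bar{\mu}_{ij}^{n}$ of \eqref{bar-muijn}, control their relative entropy by the chain rule and Jensen (this is exactly \eqref{eqn:REbound}), average them into a single random measure on $\mathbb{R}^{d}\times\mathbb{R}^{d}$ (the paper's $\hat{\mu}^{2,n}$ in \eqref{madhater}), invoke Assumption \ref{B}\ref{B2} to produce a superlinear moment bound, and finish with Lemma \ref{tightnessfunction}, Lemma \ref{tightness} and the identity \eqref{mutol2}. However, there is a genuine flaw in the step you yourself flag as the crux: you assert that the averaged measure $\hat{\zeta}_{n}$ lies in $\Pi(\hat{\mu}_{n},\hat{\mu}_{n})$ and then apply \eqref{m} with $\mu=\hat{\mu}_{n}$, bounding the infimum by the value at $\hat{\zeta}_{n}$. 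This is false pathwise: the marginals of $\bar{\mu}_{ij}^{n}$ are $\bar{\mu}_{i}^{n}$ and $\mathbb{E}[\bar{\mu}_{j}^{n}\mid\mathcal{F}_{K_{i}}]$, so the marginals of $\hat{\zeta}_{n}$ are mixtures involving conditional expectations of the $\bar{\mu}_{j}^{n}$ and do not equal $\hat{\mu}_{n}$ as random measures; they agree with $\hat{\mu}_{n}$ only after taking $\mathbb{E}$. Since Assumption \ref{B}\ref{B2} is a deterministic statement about a measure and couplings of \emph{that} measure with itself, knowing $\mathbb{E}[\pi_{\#}^{i}\hat{\zeta}_{n}]=\mathbb{E}[\hat{\mu}_{n}]$ does not license the application you make, and your closing remark about the tower property does not repair this.

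The repair is the one the paper uses, and it is short because the functional $\mu\mapsto\int\phi(\psi)\,d\mu$ is linear. By symmetrizing over ordered pairs, $\hat{\mu}^{2,n}$ is invariant under swapping coordinates, hence has identical (random) marginals $\nu_{n}\doteq\pi_{\#}^{1}\hat{\mu}^{2,n}$, so $\hat{\mu}^{2,n}\in\Pi(\nu_{n},\nu_{n})$ pathwise and Assumption \ref{B}\ref{B2} gives, almost surely,
\begin{equation*}
\int_{\mathbb{R}^{d}}\phi\left(\psi(\mathbf{x})\right)\nu_{n}(d\mathbf{x})\leq \mathfrak{W}\left(\hat{\mu}^{2,n}\right)+\mathcal{R}\left(\hat{\mu}^{2,n}\,\middle|\,e^{-V}\ell\otimes e^{-V}\ell\right).
\end{equation*}
Taking expectations, bounding the right-hand side by (a constant times) the cost in \eqref{controlingcontrols2} via the entropy and $\mathfrak{W}$ estimates you already have, and only \emph{then} using \eqref{expeq} together with Tonelli to replace $\mathbb{E}[\nu_{n}]$ by $\mathbb{E}[\hat{\mu}_{n}]$, yields $\sup_{n}\mathbb{E}\left[\int\phi(\psi)\,d\hat{\mu}_{n}\right]<\infty$; the rest of your argument then goes through (note also that no diagonal correction is needed here, since $\mathcal{W}_{\neq}$ already excludes the diagonal and one has the exact identity $\mathbb{E}[\mathcal{W}_{\neq}(\bar{L}_{n})]=\frac{n-1}{n}\mathbb{E}[\mathfrak{W}(\hat{\mu}^{2,n})]$, and that tightness of $\{\bar{L}_{n}\}$ in $\mathcal{P}_{\psi}$ follows directly from \eqref{mutol2} with $g=\phi\circ\psi$ and Lemma \ref{tightness}, without detouring through Lemma \ref{expectation} on the non-complete space $\mathcal{P}_{\psi}(\mathbb{R}^{d})$).
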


\begin{proof}
	Let $\theta$ be a probability measure on $\mathbb{R}^{d}$. By the chain rule
	for relative entropy, we have
	\begin{align*}
	&  \mathcal{R}(\bar{P}_{\{i,j\}|K_{i}}^{n}(d\mathbf{x}_{i}d\mathbf{x}%
	_{j}|\mathbf{x}_{k},k\in K_{i})\left\Vert \theta(d\mathbf{x}_{i}%
	)\theta(d\mathbf{x}_{j})\right.  )\\
	&  \quad=\int\mathcal{R}(\bar{P}_{\{j\}|K_{i+1}}^{n}(d\mathbf{x}%
	_{j}|\mathbf{x}_{k},k\in K_{i+1})\left\Vert \ell(d\mathbf{x}_{j})\right.
	)\bar{P}_{\{i\}|K_{i}}^{n}(d\mathbf{x}_{i}|\mathbf{x}_{k},k\in K_{i})\\
	&  \quad\quad+\mathcal{R}(\bar{P}_{\{i\}|K_{i}}^{n}(d\mathbf{x}_{i}%
	|\mathbf{x}_{k},k\in K_{i})\left\Vert \theta(d\mathbf{x}_{i})\right.  ).
	\end{align*}
	In addition, Jensen's inequality gives%
	\begin{align*}
	&  \mathcal{R}(\bar{P}_{\{j\}|K_{i+1}}^{n}(d\mathbf{x}_{j}|\mathbf{x}_{k},k\in
	K_{i+1})\left\Vert \theta(d\mathbf{x}_{j})\right.  )\\
	&  \,=\mathcal{R}\left(  \left.  \int\bar{P}_{\{j\}|K_{j}}^{n}(d\mathbf{x}%
	_{j}|\mathbf{x}_{k},k\in K_{j})\bar{P}_{(K_{j}\setminus K_{i+1})|K_{i+1}}%
	^{n}(d\mathbf{x}_{i+1}\cdots d\mathbf{x}_{j-1}|\mathbf{x}_{k},k\in
	K_{i+1})\right\Vert \theta(d\mathbf{x}_{j})\right) \\
	&  \,\leq\int\mathcal{R}\left(  \left.  \bar{P}_{\{j\}|K_{j}}^{n}%
	(d\mathbf{x}_{j}|\mathbf{x}_{k},k\in K_{j})\right\Vert \theta(d\mathbf{x}%
	_{j})\right)  \bar{P}_{(K_{j}\setminus K_{i+1})|K_{i+1}}^{n}(d\mathbf{x}%
	_{i+1}\cdots d\mathbf{x}_{j-1}|\mathbf{x}_{k},k\in K_{i+1}).
	\end{align*}
	Combining the last two displays with \eqref{bar-muijn} and \eqref{barmuin}, we
	obtain
	\begin{equation}
	\mathbb{E}\left[  \mathcal{R}(\bar{\mu}_{ij}^{n}(d\mathbf{x}_{i}%
	d\mathbf{x}_{j})\left\Vert \theta(d\mathbf{x}_{i})\theta(d\mathbf{x}%
	_{j})\right.  )\right]  \leq\mathbb{E}[\mathcal{R}(\bar{\mu}_{j}%
	^{n}(d\mathbf{x}_{j})\left\Vert \theta(d\mathbf{x}_{j})\right.  )+\mathcal{R}%
	(\bar{\mu}_{i}^{n}(d\mathbf{x}_{i})\left\Vert \theta(d\mathbf{x}_{i})\right.
	)]. \label{eqn:REbound}%
	\end{equation}

	Using (\ref{eqn:REbound}) with $\theta=e^{-(1-a)V}\ell$, the definition of
	${\mathcal{J}}_{a,\neq}$ in \eqref{Janeq} and the tower property of
	conditional expectations to get the first inequality below, we have
	\begin{equation}%
	\begin{split}
	&  \mathbb{E}\left[  \mathcal{J}_{a,\neq}\left(  {\bar{L}}_{n}\right)
	+\frac{a}{n}\mathcal{V}\left(  \bar{L}_{n}\right)  +\frac{1}{n}\sum_{i=1}%
	^{n}\mathcal{R}\left(  \bar{\mu}_{i}^{n}|e^{-(1-a)V}\ell\right)  \right]  \\
	&  =\mathbb{E}\left[  \mathcal{J}_{a,\neq}\left(  \bar{L}_{n}\right)
	+\frac{a}{n}\mathcal{V}\left(  \bar{L}_{n}\right)  +\frac{1}{n(n-1)}%
	(n-1)\sum_{i=1}^{n}\mathcal{R}\left(  \bar{\mu}_{i}^{n}|e^{-(1-a)V}%
	\ell\right)  \right]  \\
	&  \geq\mathbb{E}\Bigg[\frac{1}{2n^{2}}\sum_{i<j}\int_{\mathbb{R}^{d}}\left(
	W\left(  \mathbf{\bar{X}}_{i}^{n},\mathbf{x}_{j}\right)  +aV(\mathbf{\bar{X}%
	}_{i}^{n})+aV(\mathbf{x}_{j})\right)  \bar{P}_{\{j\}|K_{i+1}}(d\mathbf{x}%
	_{j}|\mathbf{\bar{X}}_{k}^{n},k\in K_{i+1})\\
	&  +\frac{1}{2n^{2}}\sum_{j<i}\int_{\mathbb{R}^{d}}\left(  W\left(
	\mathbf{x}_{i},\mathbf{\bar{X}}_{j}^{n}\right)  +aV(\mathbf{x}_{i}%
	)+aV(\mathbf{\bar{X}}_{j}^{n})\right)  \bar{P}_{\{i\}|K_{j+1}}(d\mathbf{x}%
	_{i}|\mathbf{\bar{X}}_{k}^{n},k\in K_{j+1})+\frac{ac}{n}\\
	&  +\frac{1}{n(n-1)}\sum_{i<j}\mathcal{R}\left(  \bar{\mu}_{ij}^{n}%
	|e^{-(1-a)V}\ell\otimes e^{-(1-a)V}\ell\right)  +\frac{1}{n(n-1)}\sum
	_{j<i}\mathcal{R}\left(  \bar{\mu}_{ij}^{n}|e^{-(1-a)V}\ell\otimes
	e^{-(1-a)V}\ell\right)  \Bigg]\\
	&  =\mathbb{E}\Bigg[\frac{1}{2n^{2}}\sum_{i<j}\int_{\mathbb{R}^{d}%
		\times\mathbb{R}^{d}}\left(  W\left(  \mathbf{x}_{i},\mathbf{x}_{j}\right)
	+aV(\mathbf{x}_{i})+aV(\mathbf{x}_{j})\right)  \bar{P}_{\{i,j\}|K_{i}%
	}(d\mathbf{x}_{i}d\mathbf{x}_{j}|\mathbf{\bar{X}}_{k}^{n},k\in K_{i})\\
	&  +\frac{1}{2n^{2}}\sum_{j<i}\int_{\mathbb{R}^{d}\times\mathbb{R}^{d}}\left(
	W\left(  \mathbf{x}_{i},\mathbf{x}_{j}\right)  +aV(\mathbf{x}_{i}%
	)+aV(\mathbf{x}_{j})\right)  \bar{P}_{\{i,j\}|K_{j}}(d\mathbf{x}%
	_{j}d\mathbf{x}_{i}|\mathbf{\bar{X}}_{k}^{n},k\in K_{j})+\frac{ac}{n}\\
	&  +\frac{1}{n(n-1)}\sum_{i\neq j}\mathcal{R}\left(  \bar{\mu}_{ij}%
	^{n}|e^{-(1-a)V}\ell\otimes e^{-(1-a)V}\ell\right)  \Bigg]\\
	&  =\mathbb{E}\Bigg[\frac{1}{n^{2}}\sum_{i\neq j}\mathfrak{J}_{a}\left(
	\bar{\mu}_{ij}^{n}\right)  +\frac{ac}{n}+\frac{1}{n(n-1)}\sum_{i\neq
		j}\mathcal{R}\left(  \bar{\mu}_{ij}^{n}|e^{-(1-a)V}\ell\otimes e^{-(1-a)V}%
	\ell\right)  \Bigg],
	\end{split}
	\label{baca}%
	\end{equation}
	where $\mathfrak{J}_{a}$ is the functional defined in \eqref{scr} and $c$ is a
	lower bound for ${\mathcal{V}}$. Next, let
	\begin{equation}
	\hat{\mu}^{2,n}:=\frac{1}{n(n-1)}\sum_{i\neq j}\bar{\mu}_{ij}^{n}%
	.\label{madhater}%
	\end{equation}
	Then combining \eqref{baca} with the convexity of $\mathcal{R}$ in both
	arguments (see \cite[Lemma 1.4.3]{Dupuis}), the linearity of ${\mathfrak{J}%
		_{a}}$, and the definition of $\hat{\mu}^{2,n}$ in \eqref{madhater}, we
	obtain
	\begin{equation}%
	\begin{split}
	&  \mathbb{E}\left[  \mathcal{J}_{a,\neq}\left(  \bar{L}_{n}\right)  +\frac
	{a}{n}\mathcal{V}\left(  \bar{L}_{n}\right)  +\frac{1}{n}\sum_{i=1}%
	^{n}\mathcal{R}\left(  \bar{\mu}_{i}^{n}|e^{-(1-a)V}\ell\right)  \right]  \\
	&  \geq\mathbb{E}\left[  \frac{n-1}{n}{\mathfrak{J}_{a}}\left(  \hat{\mu
	}^{2,n}\right)  +\frac{ac}{n}+\mathcal{R}\left(  \hat{\mu}^{2,n}%
	|e^{-(1-a)V}\ell\otimes e^{-(1-a)V}\ell\right)  \right]  .
	\end{split}
	\label{idontknow2}%
	\end{equation}

	We now use (\ref{idontknow2}) to establish tightness of both $\{\bar{L}_{n}\}$
	and $\{\hat{\mu}_{n}\}$ in the $d_{\psi}$ topology. Note that $\hat{\mu}%
	^{2,n}$ is a random probability measure on $\mathbb{R}^{d}\times\mathbb{R}%
	^{d}$ and that it has identical marginals. Since $V$ and $W$ satisfy
	Assumption \ref{B} and relative entropy is nonnegative, there exists a
	superlinear function $\phi$ for which we have the inequalities
	\begin{equation}%
	\begin{split}
	&  \mathbb{E}\left[  \frac{n-1}{n}{\mathfrak{J}_{a}}\left(  \hat{\mu}%
	^{2,n}\right)  +\mathcal{R}\left(  \hat{\mu}^{2,n}|e^{-(1-a)V}\ell\otimes
	e^{-(1-a)V}\ell\right)  \right]  \\
	&  \geq\!\mathbb{E}\!\left[  \!\frac{n-1}{n}\Big[{\mathfrak{J}_{a}}\left(
	\hat{\mu}^{2,n}\right)  {+}\mathcal{R}\left(  \hat{\mu}^{2,n}|e^{-(1-a)V}%
	\ell\otimes e^{-(1-a)V}\ell\right)  \Big]\!\right]  {\geq}\frac{n-1}%
	{n}\mathbb{E}\left[  \int_{\mathbb{R}^{d}}\phi\left(  \psi\left(
	\boldsymbol{x}\right)  \right)  (\pi_{\#}^{1}\hat{\mu}^{2,n})\left(
	d\boldsymbol{x}\right)  \right]  .
	\end{split}
	\label{idontknow3}%
	\end{equation}
	For $n\geq2,$ combining \eqref{idontknow2} and \eqref{idontknow3} gives
	\begin{equation}
	2\mathbb{E}\left[  \mathcal{J}_{a,\neq}\left(  \bar{L}_{n}\right)  +\frac
	{a}{n}\mathcal{V}\left(  \bar{L}_{n}\right)  +\frac{1}{n}\sum_{i=1}%
	^{n}\mathcal{R}\left(  \bar{\mu}_{i}^{n}|e^{-(1-a)V}\ell\right)  \right]
	\geq\mathbb{E}\left[  \int_{\mathbb{R}^{d}}\phi\left(  \psi\left(
	\boldsymbol{x}\right)  \right)  (\pi_{\#}^{1}\hat{\mu}^{2,n})\left(
	d\boldsymbol{x}\right)  \right]  +\frac{2ac}{n}.\label{cbound}%
	\end{equation}

	Note that \eqref{pi-claim} implies $\mathbb{E}[\pi_{\#}^{1}\bar{\mu}_{ij}%
	^{n}]=\mathbb{E}[\bar{\mu}_{i}^{n}]$ and $\mathbb{E}[\pi_{\#}^{2}\bar{\mu
	}_{ij}^{n}]=\mathbb{E}[\bar{\mu}_{j}^{n}]$. Further, recalling the definition
	of $\hat{\mu}_{n}$ in \eqref{hat} and $\hat{\mu}^{2,n}$ in \eqref{madhater},
	this shows that
	\begin{equation}
	\mathbb{E}[\pi_{\#}^{1}\hat{\mu}^{2,n}]=\mathbb{E}[\pi_{\#}^{2}\hat{\mu}%
	^{2,n}]=\mathbb{E}[\hat{\mu}_{n}].\label{expeq}%
	\end{equation}
	Substituting this into the right-hand side of (\ref{cbound}) and letting
	$C_{0}<\infty$ denote the left-hand side of \eqref{controlingcontrols2}, we
	obtain the bound
	\[
	\mathbb{E}\left[  \int_{\mathbb{R}^{d}}\phi\left(  \psi\left(  \boldsymbol{x}%
	\right)  \right)  \hat{\mu}_{n}\left(  d\boldsymbol{x}\right)  \right]
	\leq2{C_{0}}-\frac{2ac}{n}\leq2C_{0}+1,
	\]
	for all sufficiently large $n$. However, since we know from Lemma
	\ref{tightnessfunction} that $\Phi(\mu)=\int_{\mathbb{R}^{d}}\phi\left(
	\psi\left(  \mathbf{x}\right)  \right)  \mu\left(  d\mathbf{x}\right)  $ is a
	tightness function on $\mathcal{P}_{\psi}\left(  \mathbb{R}^{d}\right)  $, it
	follows that $\{\hat{\mu}_{n}\}$ is tight as a collection of $\mathcal{P}%
	_{\psi}\left(  \mathbb{R}^{d}\right)  $-valued random elements. Finally, note
	that we have the equality%
	\[
	\mathbb{E}\left[  \int_{\mathbb{R}^{d}}g\left(  \mathbf{x}\right)  {\bar{L}%
	}_{n}\left(  d\mathbf{x}\right)  \right]  =\mathbb{E}\left[  \frac{1}{n}%
	\sum_{i=1}^{n}g\left(  \mathbf{\bar{X}}_{i}^{n}\right)  \right]
	=\mathbb{E}\left[  \frac{1}{n}\sum_{i=1}^{n}\int_{\mathbb{R}^{d}}g\left(
	\mathbf{x}\right)  \bar{\mu}_{i}^{n}\left(  d\mathbf{x}\right)  \right]
	=\mathbb{E}\left[  \int_{\mathbb{R}^{d}}g\left(  \mathbf{x}\right)  \hat{\mu
	}_{n}\left(  d\mathbf{x}\right)  \right]  .
	\]
	Setting $g(\mathbf{x})=\phi(\psi(\mathbf{x}))$, and again invoking Lemma
	\ref{tightnessfunction}, we see that $\{\bar{L}_{n}\}$ is also tight.
\end{proof}

\begin{remark}
	\emph{In the remainder of the proof, which is carried out in Sections \ref%
		{subs-lbth1} and \ref{Upn}, the arguments for both $\mathcal{P}(\mathbb{R}%
		^{d})$ and $\mathcal{P}_{\psi}(\mathbb{R}^{d})$ are similar, and so we will
		treat both cases simultaneously. The functions $f$ used will be considered
		continuous in the respective topology and any infimum taken should be with
		respect to the corresponding set $\mathcal{P}(\mathbb{R}^{d})$ or $\mathcal{P%
		}_{\psi}(\mathbb{R}^{d})$. }
\end{remark}

\begin{remark}
	\emph{\label{samesame} Due to Remark \ref{tightremark} and Lemmas \ref{aaa}
		and \ref{aaaa}, it is without loss of generality, for the lower bound %
		\eqref{th1-lb}, to restrict to controls for which $\left\{ \left( \bar{L}%
		_{n},\hat{\mu}_{n}\right) , n\in\mathbb{N}\right\} $ is tight in $\mathcal{P}%
		(\mathbb{R}^{d})\times\mathcal{P}(\mathbb{R}^{d}),$ or (with the additional
		Assumption \ref{B}) in $\mathcal{P}_{\psi}(\mathbb{R}^{d})\times\mathcal{P}%
		_{\psi}(\mathbb{R}^{d})$.}
\end{remark}

\subsection{Proof of the lower bound}

\label{subs-lbth1}

For the proof of the lower bound (\ref{th1-lb}) we will use some auxiliary
functionals. For $d^{\prime}\in\mathbb{N}$, an arbitrary function $F:\mathbb{%
	R}^{d^{\prime}}\rightarrow(-\infty,\infty]$ and $M\in\lbrack0,\infty)$, let $%
F^{M}(\mathbf{z}):=\min\{F(\mathbf{z}),M\}$. For $\mu\in\mathcal{P}(\mathbb{R%
}^{d}),$ let 
\begin{equation*}
\mathcal{J}_{a}^{M}\left( \mu\right) :=\frac{1}{2}\int_{\mathbb{R}^{d}\times%
	\mathbb{R}^{d}}\left(W^{M}\left( \mathbf{x},\mathbf{y}\right) +aV^{M}(%
\mathbf{x}) +aV^{M}(\mathbf{y})\right) \mu\left( d\mathbf{x}\right)
\mu\left( d\mathbf{y}\right) ,
\end{equation*}%
\begin{equation*}
\mathcal{J} _{a,\neq}^{M}\left( \mu\right) :=\frac{1}{2}\int_{(\mathbb{R}%
	^{d}\times\mathbb{R}^{d})_{\neq}}\left(W^{M}\left( \mathbf{x},\mathbf{y}%
\right) +aV^{M}(\mathbf{x}) +aV^{M}(\mathbf{y}) \right)\mu\left( d\mathbf{x}%
\right) \mu\left( d\mathbf{y}\right) ,
\end{equation*}
and note that for every $\mu\in\mathcal{P}(\mathbb{R}^{d}),$ 
\begin{equation}
\mathcal{J}_{a}^{M}\left( \mu\right) \leq\mathcal{J} _{a,\neq}^{M}\left(
\mu\right) +\frac{3M}{2}(\mu\otimes\mu)\{(x,x):x\in\mathbb{R}^{d}\}.
\label{W-WM}
\end{equation}

Let $\epsilon >0$ be given. Then by Remark \ref{tightremark} and the
boundedness of $f$, there exist $C^{\prime }\in \mathbb{R}$ and a sequence
of controls $\{\tilde{\mu}_{i}^{n}\}$ with associated sequence of controlled
empirical measures $\{\tilde{L}_{n}\}$, such that%
\begin{equation}
\begin{array}{rl}
C^{\prime } & >\inf_{\{\bar{\mu}_{i}^{n}\}}\mathbb{E}\left[ f\left( \bar{L}%
_{n}\right) +\mathcal{J}_{a,\neq }\left( \bar{L}_{n}\right) +\frac{a}{n}%
\mathcal{V}\left( \bar{L}_{n}\right) +\frac{1}{{n}}\sum_{i=1}^{n}\mathcal{R}%
\left( \bar{\mu}_{i}^{n}|e^{-(1-a)V}\ell \right) \right] +\epsilon \\ 
& \geq \mathbb{E}\left[ f\left( \tilde{L}_{n}\right) +\mathcal{J}_{a,\neq
}\left( \tilde{L}_{n}\right) +\frac{ac}{n}+\frac{1}{n}\sum_{i=1}^{n}\mathcal{%
	R}\left( \tilde{\mu}_{i}^{n}|e^{-(1-a)V}\ell \right) \right] \\ 
& \geq \mathbb{E}\left[ f\left( \tilde{L}_{n}\right) +\mathcal{J}_{a,\neq
}^{M}\left( \tilde{L}_{n}\right) +\frac{ac}{n}+\frac{1}{{n}}\sum_{i=1}^{n}%
\mathcal{R}\left( \tilde{\mu}_{i}^{n}|e^{-(1-a)V}\ell \right) \right] \\ 
& \geq \mathbb{E}\left[ f\left( \tilde{L}_{n}\right) +\mathcal{J}%
_{a}^{M}\left( \tilde{L}_{n}\right) -\frac{3M}{2n}+\frac{ac}{n}+\frac{1}{n}%
\sum_{i=1}^{n}\mathcal{R}\left( \tilde{\mu}_{i}^{n}|e^{-(1-a)V}\ell \right) %
\right] ,%
\end{array}
\label{th1-bd}
\end{equation}%
where $\mathcal{J}_{a,\neq }\geq \mathcal{J}_{a,\neq }^{M}$ is used for the
third inequality and the last inequality uses \eqref{W-WM} and the fact that 
$\bar{L}^{n}\otimes \bar{L}^{n}$ put mass at most $1/n$ on the diagonal of $%
\mathbb{R}^{d}\times \mathbb{R}^{d}$.

Let $\hat{\mu}_{n}:=\frac{1}{n}\sum_{i=1}^{n}\tilde{\mu}_{i}^{n}$. 
% KR: Not ideal notation since we used this earlier for $\hat{\mu}_n
% = \frac{1}{n}\sum_{i=1}^{n}\mu_{i}^{n}$ -- so added a prime. 
Since Lemma \ref{aaaa} implies $\{(\tilde{L}_{n},\hat{\mu}_{n})\}$ is tight,
we can extract a further subsequence, which we denote again by $\{(\tilde{L}%
_{n},\hat{\mu}_{n})\}$, which converges in distribution to some limit $(%
\tilde{L},\hat{\mu}).$ If the lower bound is established for this
subsequence, a standard argument by contradiction establishes the lower
bound for the original sequence. Let $\{M_{n}\}$ be an increasing sequence
such that $\lim_{n\rightarrow \infty }M_{n}=\infty $ and $\lim_{n\rightarrow
	\infty }\frac{M_{n}}{n}=0$, and let $m\in \mathbb{N}$. By the monotonicity
of $n\mapsto {\mathcal{W}}^{M_{n}}$, Jensen's inequality, the definition of $%
\hat{\mu}_{n}$, and Fatou's lemma we have 
\begin{equation}
\begin{split}
& \liminf_{n\rightarrow \infty }\mathbb{E}\left[ f\left( \tilde{L}%
_{n}\right) +\mathcal{J}_{a}^{M_{n}}\left( \tilde{L}_{n}\right) -\frac{3M_{n}%
}{2n}+\frac{ac}{n}+\frac{1}{n}\sum_{i=1}^{n}\mathcal{R}\left( \tilde{\mu}%
_{i}^{n}|e^{-(1-a)V}\ell \right) \right] \\
& \quad \geq \liminf_{n\rightarrow \infty }\mathbb{E}\left[ f\left( \tilde{L}%
_{n}\right) +\mathcal{J}_{a}^{M_{m}}\left( \tilde{L}_{n}\right) -\frac{3M_{n}%
}{2n}+\frac{ac}{n}+\mathcal{R}\left( \hat{\mu}_{n}|e^{-(1-a)V}\ell \right) %
\right] \\
& \quad \geq \mathbb{E}\left[ f\left( \tilde{L}\right) +\mathcal{J}%
_{a}^{M_{m}}\left( \tilde{L}\right) +\mathcal{R}\left( \hat{\mu}%
|e^{-(1-a)V}\ell \right) \right] ,
\end{split}
\label{idontknow4}
\end{equation}%
where the continuity of $f$ and lower semicontinuity of ${\mathcal{J}}%
_{a}^{M_{m}}$ and ${\mathcal{R}}(\cdot |e^{-(1-a))V}\ell )$ are also used in
the last inequality. Since this inequality holds for arbitrary $m\in \mathbb{%
	N}$, the monotone convergence theorem, the property that $\tilde{L}=\hat{\mu}
$ almost surely (due to Lemma \ref{lem:weak_limit_Sanov}) and the definition
of $\mathcal{I}$ in (\ref{def:Istar}), together imply 
\begin{align}
\lim_{m\rightarrow \infty }\mathbb{E}\left[ f\left( \tilde{L}\right) +%
\mathcal{J}_{a}^{M_{m}}\left( \tilde{L}\right) +\mathcal{R}\left( \hat{\mu}%
|e^{-(1-a)V}\ell \right) \right] & =\mathbb{E}\left[ f\left( \hat{\mu}%
\right) +\mathcal{J}_{a}\left( \hat{\mu}\right) +\mathcal{R}(\hat{\mu}%
|e^{-(1-a)V}\ell )\right]  \notag  \label{idontknow5} \\
& \geq \inf_{\mu \in S}\{f\left( \mu \right) +\mathcal{I}\left( \mu \right)
\}.
\end{align}%
Since $\epsilon >0$ is arbitrary, \eqref{th1-bd}, \eqref{idontknow4} and %
\eqref{idontknow5} together imply the lower bound \eqref{th1-lb}.

%In view of \eqref{representation}, this proves the lower bound.

\subsection{Proof of the upper bound}

\label{Upn}

Again, fix $f$ to be a bounded continuous function on $\mathcal{P}(\mathbb{R}%
^{d})$, let $\epsilon>0$ and let $\mu^{\ast}\in\mathcal{P}(\mathbb{R}^{d})$
\thinspace(respectively, $\mathcal{P}_{\psi}(\mathbb{R}^{d})$) be such that 
\begin{equation}
f\left( \mu^{\ast}\right) +\mathcal{J}_{a}\left( \mu^{\ast}\right) +\mathcal{%
	R}\left( \mu^{\ast}|e^{-(1-a)V}\ell\right) \leq\inf_{\mu}\left[ f\left(
\mu\right) +\mathcal{I}(\mu)\right] +\epsilon.  \label{eps-opt}
\end{equation}
For $n\in\mathbb{N}$, let $\{\tilde{\mu}_{i}^{n},1\leq i\leq n\}$ denote the
particular control defined by $\tilde{\mu}_{i}^{n}:=\mu^{\ast}$ for all $n\in%
\mathbb{N}$ and $i\in\{1,...,n\},$ and let $\tilde{\mathbf{X}}_{i}^{n}, i =
1, \ldots, n,$ and $\tilde{L}_{n}$ denote the associated controlled objects.
Recall that $\ell$ and hence $\mu^{\ast}$ are non-atomic. From the
definition of $\mathcal{J}_{a}$ and $\mathcal{J} _{a,\neq}$ in \eqref{fnal-J}
and \eqref{Janeq}, respectively, we have 
\begin{align}
\mathbb{E}\left[ \mathcal{J} _{a,\neq}\left( \tilde{L}_{n}\right) \right] & =%
\frac{1}{2}\mathbb{E}\left[ \frac{1}{n^{2}}\sum_{i=1}^{n}\sum_{j=1, j\neq
	i}^{n}(W\left( \tilde{\mathbf{X}}_{i}^{n},\tilde{\mathbf{X}}_{j}^{n}\right)
+ aV(\tilde{\mathbf{X}}_{i}^{n}) + aV(\tilde{\mathbf{X}}_{j}^{n})) \right] 
\notag \\
& =\frac{n-1}{2n}\int_{\mathbb{R}^{d}\times\mathbb{R}^{d}}\left(W\left( 
\mathbf{x},\mathbf{y}\right) +aV(\mathbf{x}) +aV(\mathbf{y})\right) {\mu}%
^{\ast}\left( d\mathbf{x}\right) {\mu }^{\ast}\left( d\mathbf{y}\right) =%
\frac{n-1}{n}\mathcal{J}_{a}(\mu^{\ast}).  \label{Wcalc}
\end{align}

Define $\check{\mu}_{n}:=\frac{1}{n}\sum_{i=1}^{n}\tilde{\mu}_{i}^{n}=\mu
^{\ast }$. Then, due to \eqref{eps-opt}, the conditions of Lemma \ref{aaaa}
hold for $\{(\tilde{L}_{n},\check{\mu}_{n})\}$. Together with Lemma \ref{aaa}%
, this shows that $\{\tilde{L}_{n}\}$ is tight in ${\mathcal{P}}(\mathbb{R}%
^{d})$ and ${\mathcal{P}}_{\psi }(\mathbb{R}^{d})$. When combined with the
almost sure convergence $\tilde{L}_{n}\rightarrow \mu ^{\ast }$, which holds
due to Lemma \ref{lem:weak_limit_Sanov} (or the Glivenko-Cantelli lemma),
this implies convergence of $\tilde{L}_{n}$ to $\mu ^{\ast }$ with respect
to both $d_{w}$ and $d_{\psi }$, as appropriate. Since $f$ is bounded and
continuous, $\lim_{n\rightarrow \infty }\mathbb{E}[f(\tilde{L}_{n})]=f\left(
\mu ^{\ast }\right) $ by the dominated convergence theorem. The above
observations, together with \eqref{Wcalc}, the uniform lower bound on $%
\mathcal{J}_{a}$ and ${\mathcal{V}}$ and \eqref{eps-opt} show that 
\begin{align*}
& \limsup_{n\rightarrow \infty }\inf_{\{\bar{\mu}_{i}^{n}\}}\mathbb{E}\left[
f\left( {\bar{L}}_{n}\right) +\mathcal{J}_{a,\neq }\left( {\bar{L}}%
_{n}\right) +\frac{a}{n}{\mathcal{V}}(\bar{L}_{n})+\frac{1}{n}\sum_{i=1}^{n}%
\mathcal{R}\left( \bar{\mu}_{i}^{n}|e^{-(1-a)V}\ell \right) \right] \\
& \quad \leq \limsup_{n\rightarrow \infty }\mathbb{E}\left[ f\left( \tilde{L}%
_{n}\right) +\frac{n-1}{n}\mathcal{J}_{a}\left( \mu ^{\ast }\right) +\frac{a%
}{n}{\mathcal{V}}(\mu ^{\ast })+\frac{1}{n}\sum_{i=1}^{n}\mathcal{R}\left( 
\tilde{\mu}_{i}^{n}|e^{-(1-a)V}\ell \right) \right] \\
& \quad \leq f\left( \mu ^{\ast }\right) +\mathcal{J}_{a}(\mu ^{\ast })+%
\mathcal{R}\left( \mu ^{\ast }|e^{-(1-a)V}\ell \right) \leq \inf_{\mu \in
	S}\{f\left( \mu \right) +\mathcal{I}(\mu )\}+\epsilon .
\end{align*}%
Since $\epsilon $ is arbitrary, this implies the upper bound \eqref{th1-ub},
which together with \eqref{th1-lb} and the discussion at the end of Section %
\ref{subs-repth1} completes the proof of Theorem \ref{nspeed}.

\section{Proof of Theorem \protect\ref{biggernspeed}}

% The Case $\lim_{n\rightarrow\infty}\frac{\beta_{n}}{n}=\infty$}

\label{sec:5}

This section is devoted to the proof of Theorem \ref{biggernspeed}. The
structure of the proof is similar to that of the case with speed $%
\beta_{n}=n.$ In view of Lemmas \ref{lsc} and \ref{lem-ratepsi} and Theorem
1.2.3 in \cite{Dupuis}, it suffices to prove that for any bounded and
continuous function $f$ on $S$ (where $S={\mathcal{P}}(\mathbb{R}^{d})$ or $%
S={\mathcal{P}}_{\psi}(\mathbb{R}^{d})$, as appropriate), as $n\rightarrow
\infty$, 
\begin{equation}
-\frac{1}{n}\log\mathbb{E}_{Q_{n}}\left[ e^{-\beta_{n}f}\right]
\rightarrow\inf_{\mu\in S}\left\{ f\left( \mu\right) +\mathcal{J}_{\star
}\left( \mu\right) \right\} .  \label{laplace-biggernspeed}
\end{equation}

\subsection{Representation formula}

As before, fix $a\in \lbrack 0,1)$ as in Assumption \ref{A}, and let $%
P_{n}^{\star }(d\mathbf{x}^{n})=e^{-(1-a)\sum_{i=1}^{n}V(x_{i})}\otimes
_{i=1}^{n}\ell (d\mathbf{x}_{i})$ be the probability measure on $\mathbb{R}%
^{dn}$ defined in \eqref{def-pnstar}. 
% P_{n}^{\star}\left(  d\mathbf{x}_{1},\ldots,d\mathbf{x}_{n}\right)  :=
% e^{-\sum_{i=1}^{n}(1-a)V\left(  \mathbf{x}_{i}\right)  }\ell(d\mathbf{x}_{1}%
% )\cdots\ell(d\mathbf{x}_{n}),
% \end{equation}
% and note that it is a probability measure due to Assumption \ref{A} and Remark \ref{rem-A}.
% defined by
% \[
% P_{n}^{\star}\left( d \mathbf{x}_{1},\ldots,d\mathbf{x}_{n}\right)  :=
% e^{-\sum_{i=1}^{n}(1-a)V\left(  \mathbf{x}_{i}\right)  }\ell(d\mathbf{x}%
% _{1})\cdots\ell(d\mathbf{x}_{n}),
% \]
% which is a probability measure due to Assumption \ref{A}.
We now introduce the functional $\mathcal{J}_{n,\neq }:\mathcal{P}(\mathbb{R}%
^{d})\rightarrow (-\infty ,\infty ]$ given by 
\begin{equation}
\mathcal{J}_{n,\neq }\left( \mu \right) :=\frac{1}{2}\int_{(\mathbb{R}%
	^{d}\times \mathbb{R}^{d})_{\neq }}\left( \left( 1-\frac{(1-a)n}{\beta _{n}}%
\right) \left( V\left( \mathbf{x}\right) +V\left( \mathbf{y}\right) \right)
+W\left( \mathbf{x},\mathbf{y}\right) \right) \mu \left( d\mathbf{x}\right)
\mu \left( d\mathbf{y}\right) .  \label{idontknow6}
\end{equation}%
Note that $\mathcal{J}_{n,\neq }\left( \mu \right) $ is bounded below for
all sufficiently large $n$ due to Assumption \ref{C}\ref{C1} and the fact
that $\beta _{n}/n\rightarrow \infty $. When $\mathbf{x}^{n}\in (\mathbb{R}%
^{dn})_{\neq },$ using \eqref{idontknow6} we can rewrite $\beta _{n}H_{n},$
where $H_{n}$ was defined in \eqref{def-Hn}, as follows: 
\begin{equation*}
\begin{split}
\beta _{n}H_{n}(\mathbf{x}^{n})=& \frac{\beta _{n}}{2}\int_{(\mathbb{R}%
	^{d}\times \mathbb{R}^{d})_{\neq }}\left( \left( 1-\frac{(1-a)n}{\beta _{n}}%
\right) \left[ V\left( \mathbf{x}\right) +V\left( \mathbf{y}\right) \right]
+W\left( \mathbf{x},\mathbf{y}\right) \right) L_{n}\left( \mathbf{x}^{n};d%
\mathbf{x}\right) L_{n}\left( \mathbf{x}^{n};d\mathbf{y}\right) \\
& +\left( \frac{\beta _{n}}{n}-(1-a)\right) \int_{\mathbb{R}^{d}}V\left( 
\mathbf{x}\right) L_{n}\left( \mathbf{x}^{n};d\mathbf{x}\right) +(1-a)n\int_{%
	\mathbb{R}^{d}}V\left( \mathbf{x}\right) L_{n}\left( \mathbf{x}^{n};d\mathbf{%
	x}\right) \\
& =\beta _{n}\left( \mathcal{J}_{n,\neq }\left( L_{n}\left( \mathbf{x}%
^{n};\cdot \right) \right) +\left( \frac{1}{n}-\frac{(1-a)}{\beta _{n}}%
\right) \mathcal{V}\left( L_{n}\left( \mathbf{x}^{n};\cdot \right) \right)
\right) +(1-a)\sum_{i=1}^{n}V(\mathbf{x}_{i}).
\end{split}%
\end{equation*}%
Let $f$ be a measurable function on $\mathcal{P}(\mathbb{R}^{d})$ (or on $%
\mathcal{P}_{\psi }(\mathbb{R}^{d})$) that is bounded below (in particular $%
f $ could be bounded and continuous). Then by the definition of $P_{n}^{\ast
}$, we have%
\begin{equation}
-\frac{1}{\beta _{n}}\log \mathbb{E}_{Q_{n}}\left[ e^{-\beta _{n}f}\right] =-%
\frac{1}{\beta _{n}}\log \mathbb{E}_{P_{n}}\left[ e^{-\beta _{n}f\circ L_{n}}%
\right] =-\frac{1}{\beta _{n}}\log \mathbb{E}_{P_{n}^{\star }}\left[ \frac{1%
}{Z_{n}}e^{-\beta _{n}\left( f+\mathcal{J}_{n,\neq }+\left( \frac{1}{n}-%
	\frac{(1-a)}{\beta _{n}}\right) \mathcal{V}\right) \circ L_{n}}\right] \!\!,
\label{biggernrep}
\end{equation}%
where $Z_{n}$ is the normalization constant defined in \eqref{Zndef}.

Using the same notation and arguments as in Section \ref{subs-repth1}, the
following representations are valid. Fix any function $f$ on $\mathcal{P}(%
\mathbb{R}^{d})$ (or $\mathcal{P}_{\psi}(\mathbb{R}^{d})$), such that $%
f\circ L_{n}$ is measurable in $\mathbb{R}^{dn}$ and bounded from below
(this includes all continuous and bounded functions on $\mathcal{P}(\mathbb{R%
}^{d})$ or $\mathcal{P}_{\psi}(\mathbb{R}^{d})$. Then, since the function $(%
\mathbf{x}, \mathbf{y})$ $\mapsto$ $\left( 1-\frac{(1-a)n}{\beta_{n}}\right)
V\left( \mathbf{x}\right) +\left( 1-\frac{(1-a)n}{\beta_{n}}\right) V\left( 
\mathbf{y}\right) +W\left( \mathbf{x},\mathbf{y}\right)$ is measurable and
bounded from below, we can apply \cite[Proposition 4.5.1]{Dupuis} to $%
f(L_{n}(\boldsymbol{x}^{n};\cdot))+\mathcal{J}_{n,\neq }(L_{n}(\boldsymbol{x}%
^{n};\cdot))+\left(\frac{1}{n}-\frac{1-a}{\beta_{n}}\right)\mathcal{V}(L_{n}(%
\boldsymbol{x}^{n};\cdot)),$ to obtain 
\begin{equation}
\begin{split}
&-\frac{1}{\beta_{n}}\log\mathbb{E}_{P_{n}^{\star}}\left[ e^{-\beta_{n}%
	\left( f+\mathcal{J}_{n,\neq} + \left(\frac{1}{n}-\frac{1-a}{\beta_{n}}%
	\right)\mathcal{V}\right)\circ L_{n} }\right] \\
& =\inf_{\{\bar{\mu}_{i}^{n}\}}\mathbb{E}\left[ f\left( \bar{L}_{n}\right) +%
\mathcal{J}_{n,\neq}\left( \bar{L}_{n}\right) +\left(\frac{1}{n}-\frac{1-a}{%
	\beta_{n}}\right)\mathcal{V}(\bar{L}_{n}) +\frac{1}{\mathcal{\beta}_{n}}\sum
_{i=1}^{n}\mathcal{R}\left( \bar{\mu}_{i}^{n}|e^{-(1-a)V}\ell\right) \right]
.
\end{split}
\label{snd_rep}
\end{equation}
Setting $f=0$ in the last display, we have 
\begin{equation}
\begin{split}
&-\frac{1}{\beta_{n}}\log\left( Z_{n}\right) = -\frac{1}{\beta_{n}}\log%
\mathbb{E}_{P_{n}^{\star}}\left[ e^{-\beta_{n}\left(\mathcal{J}_{n,\neq}
	+\left(\frac{1}{n}-\frac{1-a}{\beta_{n}}\right)\mathcal{V}\right)\circ L_{n}}%
\right] \\
& = \inf_{\{\bar{\mu}_{i}^{n}\}}\mathbb{E}\left[ \mathcal{J}_{n,\neq}\left( 
\bar{L}_{n}\right) + \left(\frac{1}{n}-\frac{1-a}{\beta_{n}}\right)\mathcal{V%
}(\bar{L}_{n}) +\frac{1}{\beta_{n}}\sum_{i=1}^{n}\mathcal{R}\left( \bar{\mu}%
_{i}^{n}|e^{-(1-a)V}\ell\right) \right] .  \label{def-zn2}
\end{split}%
\end{equation}

As before, to establish Theorem \ref{biggernspeed}, in view of %
\eqref{snd_rep}, \eqref{def-zn2} and \eqref{biggernrep}, it suffices to
establish the lower bound 
\begin{equation}
\begin{split}
& \liminf_{n\rightarrow \infty }\inf_{\{\bar{\mu}_{i}^{n}\}}\mathbb{E}\left[
f\left( \bar{L}_{n}\right) +\mathcal{J}_{n,\neq }\left( \bar{L}_{n}\right)
+\left( \frac{1}{n}-\frac{1-a}{\beta _{n}}\right) \mathcal{V}(\bar{L}_{n})+%
\frac{1}{\beta _{n}}\sum_{i=1}^{n}\mathcal{R}\left( \bar{\mu}%
_{i}^{n}|e^{-(1-a)V}\ell \right) \right] \\
& \geq \inf_{\mu \in S}\{f\left( \mu \right) +\mathcal{J}\left( \mu \right)
\},
\end{split}
\label{prom}
\end{equation}%
and the upper bound 
\begin{equation}
\begin{split}
& \limsup_{n\rightarrow \infty }\inf_{\{\bar{\mu}_{i}^{n}\}}\mathbb{E}\left[
f\left( \bar{L}_{n}\right) +\mathcal{J}_{n,\neq }\left( \bar{L}_{n}\right)
+\left( \frac{1}{n}-\frac{1-a}{\beta _{n}}\right) \mathcal{V}(\bar{L}_{n})+%
\frac{1}{\beta _{n}}\sum_{i=1}^{n}\mathcal{R}\left( \bar{\mu}%
_{i}^{n}|e^{-(1-a)V}\ell \right) \right] \\
& \leq \inf_{\mu \in S}\{f\left( \mu \right) +\mathcal{J}\left( \mu \right)
\},
\end{split}
\label{prom2}
\end{equation}%
for all bounded and continuous functions $f$ (with respect to the
corresponding topologies).

\subsection{Tightness of controls}

\label{subs-tighcontr2}

As in Remark \ref{tightremark}, we have the following observation that
simplifies the proof of the lower bound.

\begin{remark}
	\label{tightrema} \emph{\ Without loss of generality we can assume that 
		\begin{equation}
		\sup_{n\in \mathbb{N}}\inf_{\{\bar{\mu}_{i}^{n}\}}\mathbb{E}\left[ J_{n,\neq
		}\left( \bar{L}_{n}\right) +\left( \frac{1}{n}-\frac{1-a}{\beta _{n}}\right) 
		\mathcal{V}(\bar{L}_{n})+\frac{1}{\mathcal{\beta }_{n}}\sum_{i=1}^{n}%
		\mathcal{R}\left( \bar{\mu}_{i}^{n}|e^{-(1-a)V}\ell \right) \right] <\infty .
		\label{idontknow}
		\end{equation}
	}
\end{remark}

\begin{lemma}
	\label{controlingcontrols4} Let $\{\bar{\mu}_{i}^{n}\}$ be a sequence of
	controls such that the associated controlled empirical measures satisfy %
	\eqref{idontknow}. Assume also that $V$ and $W$ satisfy Assumptions \ref{A}
	and \ref{C}\ref{C1}. Then $\{\bar{L}_{n}\}$ is tight in $\mathcal{P}\left(%
	\mathbb{R}^{d}\right)$. Further, if Assumption $\ref{C}\ref{C3}$ is also
	satisfied for some $\psi \in \Psi$, then $\{\bar{L}_{n}\}$ is tight on $%
	\mathcal{P}_{\psi}\left(\mathbb{R}^{d}\right).$
\end{lemma}

\begin{proof}
	First, note that by \eqref{idontknow6}, $\mathcal{J}_{n,\neq}(\bar{L}_{n})$
	can be rewritten as
	%\[ \mathcal{J}_{n,\neq}\left( \bar{L}_{n}\right)  :=\frac{1}%
	%	{2}\int_{(\mathbb{R}^{d}\times\mathbb{R}^{d})_{\neq}}\left(  \left(  1-\frac{(1-a)n}%
	%	{\beta_{n}}\right) \left( V\left(  \mathbf{x}\right) + V\left(  \mathbf{y}\right)\right)  +W\left(  \mathbf{x}%
	%	,\mathbf{y}\right)  \right) \bar{L}_{n}\left(  d\mathbf{x}\right)\bar{L}_{n}\left(
	%	d\mathbf{y}\right)\]
	\begin{align*}
	\frac{1}{2}\mathcal{J}_{\neq}(\bar{L}_{n}) + \frac{1}{4}\int_{(\mathbb{R}%
		^{d}\times\mathbb{R}^{d})_{\neq}}\left(  \left(  1-\frac{2(1-a)n}{\beta_{n}%
	}\right)  \left(  V\left(  \mathbf{x}\right)  + V\left(  \mathbf{y}\right)
	\right)  +W\left(  \mathbf{x},\mathbf{y}\right)  \right)  \bar{L}_{n}\left(
	d\mathbf{x}\right) \bar{L}_{n}\left(  d\mathbf{y}\right)
	\end{align*}
	%Check
	For large enough $n,$ Assumption \ref{A} implies that all the integrands  are
	bounded from below. Therefore, we have
	%	\begin{equation*}\sup_{n\in\mathbb{N}}\mathbb{E}\left[ \mathcal{J}_{\neq}(\bar{L}_{n})  \right]<\infty,	\end{equation*}and
	\begin{equation}
	\label{Restimate}\sup_{n\in\mathbb{N}}\mathbb{E}\left[  \mathcal{J}_{\neq
	}(\bar{L}_{n}) \right] <\infty\qquad\mbox{ and } \qquad\sup_{n\in\mathbb{N}%
	}\mathbb{E}\left[ \frac{1}{\mathcal{\beta}_{n}}\sum_{i=1}^{n}\mathcal{R}%
	\left(  \bar{\mu}_{i}^{n}|e^{-(1-a)V}\ell\right)  \right] <\infty.
	\end{equation}

	%\begin{equation}
	%	\mathbb{E}[\mathcal{J}_{\neq}(L_{n})]=\mathbb{E}\left[\int_{(\mathbb{R}^{d}\times\mathbb{R}^{d})_{\neq}}\left(   V\left(  \mathbf{x}\right)  +V\left(
	%	\mathbf{y}\right)    +W\left(  \mathbf{x},\mathbf{y}\right)  \right)
	%	\bar{L}_{n}\left(  d\mathbf{x}\right)  \bar{L}_{n}\left(  d\mathbf{y}\right)\right], \end{equation}
	Now, let the set $A$, sequence $\{r_{n}\}$ and lsc function $\gamma
	:\mathbb{R}^{d}\mapsto\mathbb{R}$ be as in Assumption \ref{C}{1}, and let
	$A_{n}^{1}$ and $A_{n}^{2}$ be the associated sets defined therein. Then the
	integral $\mathcal{J}_{\neq}(L_{n})$ can be decomposed as the sum of integrals
	over the following sets:
	\[
	A_{n}^{1}\times A_{n}^{2},\,(A_{n}^{1}\times A_{n}^{1})_{\neq},\,(A_{n}%
	^{2}\times A_{n}^{2})_{\neq},A_{n}^{2}\times A_{n}^{1},\,(A^{c}\times
	A^{c})_{\neq},\,A\times A^{c},\,A^{c}\times A.
	\]
	Each of these terms is bounded from below by direct application of Assumption
	\ref{A}, and since $\mathbb{E}[\mathcal{J}_{\neq}(L_{n})]$ is uniformly
	bounded from above we have that the expectation of each of them is also
	bounded from above.\newline
	
	To prove that $\{\bar{L}_{n}\}$ is tight, by Lemmas \ref{tightness} and
	\ref{ayid} it suffices to prove that $\mathbb{E}\left[  \int_{\mathbb{R}^{d}%
	}\gamma(\mathbf{x})\bar{L}_{n}(d\mathbf{x})\right]  $ is uniformly bounded in
	$n$.  Note that
	\begin{equation}%
	\mathbb{E}\left[  \int_{\mathbb{R}^{d}}\gamma(\mathbf{x})\bar{L}%
	_{n}(d\mathbf{x})\right]  =\mathbb{E}\left[  \int_{A^{c}}\gamma(\mathbf{x}%
	)\bar{L}_{n}(d\mathbf{x})\right]  +\mathbb{E}\left[  \int_{A_{n}^{1}}%
	\gamma(\mathbf{x})\bar{L}_{n}(d\mathbf{x})\right]  +\mathbb{E}\left[
	\int_{A_{n}^{2}}\gamma(\mathbf{x})\bar{L}_{n}(d\mathbf{x})\right]  .
	\label{bds}%
	\end{equation}
	We now show that each of the three terms in the last inequality is uniformly
	bounded from above. By applying \eqref{C11} of Assumption \ref{C}\ref{C1}, we
	obtain for $n\geq2$,
	\begin{align*}
	\mathbb{E}\left[  \int_{A^{c}}\gamma(\mathbf{x})\bar{L}_{n}(d\mathbf{x}%
	)\right]    & =\frac{1}{2}\mathbb{E}\left[  \int_{\mathbb{R}^{d}%
		\times\mathbb{R}^{d}}\left(  \gamma(\mathbf{x})I_{A^{c}}(\mathbf{x}%
	)+\gamma(\mathbf{y})I_{A^{c}}(\mathbf{y})\right)  \bar{L}_{n}(d\mathbf{x}%
	)\bar{L}_{n}(d\mathbf{y})\right]  \\
	& =\frac{n}{2(n-1)}\mathbb{E}\left[  \int_{(\mathbb{R}^{d}\times\mathbb{R}%
		^{d})_{\neq}}\left(  \gamma(\mathbf{x})I_{A^{c}}(\mathbf{x})+\gamma
	(\mathbf{y})I_{A^{c}}(\mathbf{y})\right)  \bar{L}_{n}(d\mathbf{x})\bar{L}%
	_{n}(d\mathbf{y})\right]  \\
	& \leq\frac{n}{2(n-1)}\mathbb{E}\left[  \int_{(\mathbb{R}^{d}\times
		\mathbb{R}^{d})_{\neq}}\left(  V\left(  \mathbf{x}\right)  +V\left(
	\mathbf{y}\right)  +W\left(  \mathbf{x},\mathbf{y}\right)  \right)  \bar
	{L}_{n}(d\mathbf{x})\bar{L}_{n}(d\mathbf{y})\right]  +C\\
	& \leq\mathbb{E}\left[  {\mathcal{J}}_{\neq}(\bar{L}_{n})\right]  +C+|c|.
	\end{align*}
	%which is uniformly bounded by \eqref{Restimate}.
	By Assumption \ref{C}\ref{C1} we also have
	\begin{align*}
	\mathbb{E}\left[  \int_{A_{n}^{1}}\gamma(\mathbf{x})\bar{L}_{n}(d\mathbf{x}%
	)\right]    & \leq\mathbb{E}\left[  \int_{\left(  A_{n}^{1}\times A_{n}%
		^{1}\right)  _{\neq}}\left(  V\left(  \mathbf{x}\right)  +V\left(
	\mathbf{y}\right)  +W\left(  \mathbf{x},\mathbf{y}\right)  \right)  \bar
	{L}_{n}\left(  d\mathbf{x}\right)  \bar{L}_{n}\left(  d\mathbf{y}\right)
	\right]  +C\\
	& \leq\mathbb{E}[{\mathcal{J}}_{\neq}((\bar{L}_{n})]+C+|c|.
	\end{align*}
	%and so the the first inequality in \eqref{Restimate} implies that the
	%first two terms on the right-hand side of \eqref{bds} are
	%uniformly bounded in $n$.
	Due to \eqref{Restimate}, the last two displays show that the first two terms
	on the right-hand side of \eqref{bds} are uniformly bounded. Finally for the
	third term, since $\hat{\mu}^{n}=\frac{1}{n}\sum_{i=1}^{n}\bar{\mu}_{i}^{n},$
	recalling (from Section \ref{subs-repth1}) that $\{\bar{\mathbf{X}}_{j}%
	^{n}\}_{1\leq j\leq n}$ are the controlled random variables with joint
	distribution $\bar{P}^{n}(dx_{1},\ldots,dx_{n})$, and using the tower property
	of conditional expectations, we have
	\begin{align*}
	\mathbb{E}\left[  \int_{A_{n}^{2}}\gamma(\mathbf{x})\bar{L}_{n}(d\mathbf{x}%
	)\right]  =\mathbb{E}\left[  \frac{1}{n}\sum_{i=1}^{n}\int_{A_{n}^{2}}%
	\gamma(\mathbf{x})\delta_{X_{i}}(d\mathbf{x})\right]    & =\mathbb{E}\left[
	\frac{1}{n}\sum_{i=1}^{n}\int_{A_{n}^{2}}\gamma(\mathbf{x})\bar{\mu}_{i}%
	^{n}(d\mathbf{x})\right]  \\
	& =\mathbb{E}\left[  \int_{A_{n}^{2}}\gamma(\mathbf{x})\hat{\mu}%
	^{n}(d\mathbf{x})\right]  .\\
	&
	\end{align*}
	Recalling that $I_{A_{n}^{2}}$ denotes the indicator function of the set
	$A_{n}^{2}$, by an extension of the formula that relates exponential integrals
	and relative entropy \cite[Proposition 4.5.1]{Dupuis}, the bound \eqref{C13}
	in Assumption \ref{C}\ref{C1} (resp. Assumption \ref{C}\ref{C3}), and
	\eqref{Restimate}, we see that
	%the quantity in the last display is bounded above by
	\begin{align*}
	\mathbb{E}\left[  \int_{A_{n}^{2}}\gamma(\mathbf{x})\hat{\mu}^{n}%
	(d\mathbf{x})\right]    & \leq\frac{n}{\beta_{n}}\log\left(  \mathbb{E}\left[
	\int_{\mathbb{R}^{d}}e^{\frac{\beta_{n}}{n}I_{A_{n}^{2}}(\mathbf{x}%
		)\gamma(\mathbf{x})-(1-a)V\left(  \mathbf{x}\right)  }\ell\left(
	d\mathbf{x}\right)  \right]  \right)  +\frac{n}{\beta_{n}}\mathbb{E}\left[
	\mathcal{R}\left(  \hat{\mu}^{n}|e^{-(1-a)V}\ell\right)  \right]  \\
	& \leq\frac{n}{\beta_{n}}\log\left(  \int_{A_{n}^{2}}e^{\frac{\beta_{n}}%
		{n}\gamma(\mathbf{x})-(1-a)V\left(  \mathbf{x}\right)  }\ell\left(
	d\mathbf{x}\right)  +1\right)  +\frac{1}{\beta_{n}}\sum_{i=1}^{n}%
	\mathbb{E}\left[  \mathcal{R}\left(  \bar{\mu}_{i}^{n}|e^{-(1-a)V}\ell\right)
	\right]  .
	\end{align*}
	The first term on the right-hand side is uniformly bounded by \eqref{C13} in
	Assumption \ref{C}\ref{C1} (resp. Assumption \ref{C}\ref{C2}) and the second
	term is uniformly bounded by \eqref{Restimate}. This concludes the proof.
	%In the second line $I_{A^{2}_{n}}$ is the characteristic function for $A^{2}_{n}.$
	%	To go from the second line to the third, we used Varadham variational formula (Proposition 4.5.1 in
	%	\cite{Dupuis}). In the last equality, the first term is uniformly bounded by \ref{C13} in Assumption \ref{C}\ref{C1} (\ref{C}\ref{C2}), and the second term is bounded by \eqref{Restimate}.
	
\end{proof}

\subsection{Proof of the lower bound}

\label{subs-bignlb}

For the proof of the lower bound we use some auxiliary functionals on $%
\mathcal{P}(\mathbb{R}^{d})$. For a function $F$ on $\mathbb{R}^{d^{\prime
}} $ and $M<\infty $ we define $F^{M}(\mathbf{z}):=\min \{F(\mathbf{z}),M\}$%
. Let 
\begin{equation*}
\mathcal{J}_{n,\neq }^{M}\left( \mu \right) :=\frac{1}{2}\int_{(\mathbb{R}%
	^{d}\times \mathbb{R}^{d})_{\neq }}\left( \left( 1-\frac{(1-a)n}{\beta _{n}}%
\right) \left[ V^{M}\left( \mathbf{x}\right) +V^{M}\left( \mathbf{y}\right) %
\right] +W^{M}\left( \mathbf{x},\mathbf{y}\right) \right) \bar{L}_{n}\left( d%
\mathbf{x}\right) \bar{L}_{n}\left( d\mathbf{y}\right) ,
\end{equation*}%
\begin{eqnarray*}
	\mathcal{J}_{n}^{M}\left( \mu \right) := &&\frac{1}{2}\int_{\mathbb{R}%
		^{d}\times \mathbb{R}^{d}}\left( \left( 1-\frac{(1-a)n}{\beta _{n}}\right) %
	\left[ V^{M}\left( \mathbf{x}\right) +V^{M}\left( \mathbf{y}\right) \right]
	+W^{M}\left( \mathbf{x},\mathbf{y}\right) \right) \bar{L}_{n}\left( d\mathbf{%
		x}\right) \bar{L}_{n}\left( d\mathbf{y}\right) , \\
	\mathcal{J}^{M}\left( \mu \right) := &&\frac{1}{2}\int_{\mathbb{R}^{d}\times 
		\mathbb{R}^{d}}\left( V^{M}\left( \mathbf{x}\right) +V^{M}\left( \mathbf{y}%
	\right) +W^{M}\left( \mathbf{x},\mathbf{y}\right) \right) \mu \left( d%
	\mathbf{x}\right) \mu \left( d\mathbf{y}\right) .
\end{eqnarray*}%
%
%
%
%
%
%
%
% where for a function $F$ on $\mathbb{R}^{d^{\prime}}$ and $M<\infty$ we define
% $F^{M}(\mathbf{z}):=\min\{F(\mathbf{z}),M\}$.
These integrals are well defined for sufficiently large $n$ because of
Assumption \ref{A}. For every $M,n\in \mathbb{N}$, 
\begin{align}
& \inf_{\{\bar{\mu}_{i}^{n}\}}\mathbb{E}\left[ f\left( \bar{L}_{n}\right) +%
\mathcal{J}_{n,\neq }\left( \bar{L}_{n}\right) +\left( \frac{1}{n}-\frac{1-a%
}{\beta _{n}}\right) \mathcal{V}(\bar{L}_{n})+\frac{1}{\beta _{n}}%
\sum_{i=1}^{n}\mathcal{R}\left( \bar{\mu}_{i}^{n}|e^{-(1-a)V}\ell \right) %
\right]  \label{eqn:scnd-pf} \\
& \quad \quad \geq \inf_{\{\bar{\mu}_{i}^{n}\}}\mathbb{E}\left[ f\left( \bar{%
	L}_{n}\right) +\mathcal{J}_{n,\neq }^{M}\left( \bar{L}_{n}\right) +\left( 
\frac{1}{n}-\frac{1-a}{\beta _{n}}\right) \mathcal{V}(\bar{L}_{n})+\frac{1}{%
	\beta _{n}}\sum_{i=1}^{n}\mathcal{R}\left( \bar{\mu}_{i}^{n}|e^{-(1-a)V}\ell
\right) \right] .  \notag
\end{align}%
Let $\epsilon >0$ and $\{\tilde{\mu}_{i}^{n}\}$ be such that%
\begin{align*}
C^{\prime }& >\inf_{\{\bar{\mu}_{i}^{n}\}}\mathbb{E}\left[ f\left( \bar{L}%
_{n}\right) +\mathcal{J}_{n,\neq }\left( \bar{L}_{n}\right) +\left( \frac{1}{%
	n}-\frac{1-a}{\beta _{n}}\right) \mathcal{V}(\bar{L}_{n})+\frac{1}{\beta _{n}%
}\sum_{i=1}^{n}\mathcal{R}\left( \bar{\mu}_{i}^{n}|e^{-V}\ell \right) \right]
+\epsilon \\
& \geq \mathbb{E}\left[ f\left( \tilde{L}_{n}\right) +\mathcal{J}_{n,\neq
}\left( \tilde{L}_{n}\right) +\left( \frac{1}{n}-\frac{1-a}{\beta _{n}}%
\right) \mathcal{V}(\tilde{L}_{n})+\frac{1}{\beta _{n}}\sum_{i=1}^{n}%
\mathcal{R}\left( \tilde{\mu}_{i}^{n}|e^{-(1-a)V}\ell \right) \right] \\
& \geq \mathbb{E}\left[ f\left( \tilde{L}_{n}\right) +\mathcal{J}%
_{n}^{M}\left( \tilde{L}_{n}\right) -\frac{3M}{n}+\left( \frac{1}{n}-\frac{%
	1-a}{\beta _{n}}\right) \mathcal{V}(\tilde{L}_{n})+\frac{1}{\beta _{n}}%
\sum_{i=1}^{n}\mathcal{R}\left( \tilde{\mu}_{i}^{n}|e^{-(1-a)V}\ell \right) %
\right] ,
\end{align*}%
where $C^{\prime }$ is a finite upper bound, which exists by Remark \ref%
{tightrema} and the boundedness of $f$, and the last inequality follows from
(\ref{eqn:scnd-pf}) and the fact $\tilde{L}_{n}\left( d\mathbf{x}\right) 
\tilde{L}_{n}\left( d\mathbf{y}\right) $ puts mass $1/n$ on the diagonal $%
\mathbf{x=y}$.

Owing to tightness (see Lemma \ref{controlingcontrols4}) we can extract a
further subsequence of $\{(\tilde{L}_{n},\hat{\mu}_{n})\}$, which (with some
abuse of notation) we denote again by $\{(\tilde{L}_{n},\hat{\mu}_{n})\}$,
for which $\hat{\mu}_{n}:=\frac{1}{n}\sum_{i=1}^{n}\tilde{\mu}_{i}^{n}$,
that converges weakly to some limit $(\tilde{L},\hat{\mu}).$ 
% To simplify notation, we denote the convergent subsequence by $n$. 
Let $M_{n}$ be a sequence that goes to infinity such that $%
\lim_{n\rightarrow \infty }\frac{M_{n}}{n}=0$ and let $m\in \mathbb{N}$. By
Fatou's lemma, the nonnegativity of ${\mathcal{R}}(\cdot |e^{-V})$, the
definition of $\mathcal{V}$ in (\ref{Vdef}), and the fact that $n/\beta
_{n}\rightarrow 0$, we have 
\begin{align*}
& \liminf_{n\rightarrow \infty }\mathbb{E}\left[ f\left( \tilde{L}%
_{n}\right) +\mathcal{J}_{n}^{M_{n}}\left( \tilde{L}_{n}\right) -\frac{3M_{n}%
}{n}+\left( \frac{1}{n}-\frac{1-a}{\beta _{n}}\right) \mathcal{V}(\bar{L}%
_{n})+\frac{1}{\beta _{n}}\sum_{i=1}^{n}\mathcal{R}\left( \tilde{\mu}%
_{i}^{n}|e^{-(1-a)V}\ell \right) \right] \\
& \quad \geq \liminf_{n\rightarrow \infty }\mathbb{E}\left[ f\left( \tilde{L}%
_{n}\right) +\mathcal{J}_{n}^{M_{m}}\left( \tilde{L}_{n}\right) \right] =%
\mathbb{E}\left[ f\left( \tilde{L}\right) +\mathcal{J}^{M_{m}}\left( \tilde{L%
}\right) \right] .
\end{align*}%
Since the above inequality holds for arbitrary $m,$ using the monotone
convergence theorem 
\begin{align*}
& \liminf_{n\rightarrow \infty }\mathbb{E}\left[ f\left( \tilde{L}%
_{n}\right) +\mathcal{J}_{n}^{M_{n}}\left( \tilde{L}_{n}\right) -\frac{3M_{n}%
}{n}+\left( \frac{1}{n}-\frac{1-a}{\beta _{n}}\right) \mathcal{V}(\bar{L}%
_{n})+\frac{1}{\beta _{n}}\sum_{i=1}^{n}\mathcal{R}\left( \tilde{\mu}%
_{i}^{n}|e^{-(1-a)V}\ell \right) \right] \\
& \hspace{20pt}\geq \mathbb{E}\left[ f\left( \tilde{L}\right) +\mathcal{J}%
\left( \tilde{L}\right) \right] \geq \inf_{\mu \in S}\{f\left( \mu \right) +%
\mathcal{J}\left( \mu \right) \}.
\end{align*}%
Since $\epsilon >0$ is arbitrary, this establishes \eqref{prom}.

\subsection{Proof of the upper bound}

\label{Upn2}

We start by making an observation, whose proof is deferred to Appendix \ref%
{ap-existenceex}.

\begin{lemma}
	\label{existanceex} Suppose Assumptions \ref{A} and \ref{C} hold, and let $%
	a\in \lbrack 0,1)$ be the associated constant. Given any $\mu \in {\mathcal{P%
	}}(\mathbb{R}^{d})$, there exists a sequence $\{\mu _{n}\}$ with each $\mu
	_{n}\ll \ell $ such that the density $\rho _{n}:=e^{(1-a)V}d\mu _{n}/d\ell $
	is uniformly bounded, $\mu _{n}\overset{w}{\rightarrow }\mu $ and ${\mathcal{%
			J}}(\mu _{n})\rightarrow {\mathcal{J}}(\mu )$. Furthermore, if $\mu \in {%
		\mathcal{P}}_{\psi }(\mathbb{R}^{d})$ for some $\psi \in \Psi $, then we can
	assume in addition that $d_{\psi }(\mu _{n},\mu )\rightarrow 0$.
\end{lemma}

% \label{upb2}

Now, let $f$ be a bounded and continuous function on ${\mathcal{P}}(\mathbb{R%
}^{d})$ (or ${\mathcal{P}}_{\psi }(\mathbb{R}^{d})$), let $\epsilon >0$ and
let $\mu ^{\ast }$ be such that 
\begin{equation*}
f\left( \mu ^{\ast }\right) +\mathcal{J}\left( \mu ^{\ast }\right) \leq
\inf_{\mu \in S}\{f\left( \mu \right) +\mathcal{J}\left( \mu \right)
\}+\epsilon .
\end{equation*}%
We can also assume that $\mathcal{R}\left( \mu ^{\ast }|e^{-(1-a)V}\ell
\right) <\infty ,$ due to Assumption \ref{C}\ref{C2} and Lemma \ref%
{existanceex}. Then let $\tilde{\mu}_{i}^{n}=\mu ^{\ast }$ for all $n\in 
\mathbb{N}$ and $i\in \{1,...,n\},$ and let the random variables $\tilde{%
	\mathbf{X}}_{i}^{n},1\leq i\leq n,n\in \mathbb{N},$ be iid with distribution 
$\mu ^{\ast }.$ By Lemma \ref{lem:weak_limit_Sanov}, the weak limit of $%
\tilde{L}_{n}$ equals $\mu ^{\ast }.$ Calculations very similar to those of (%
\ref{Wcalc}) yield 
\begin{equation*}
\begin{split}
& \mathbb{E}\left[ f\left( \tilde{L}_{n}\right) +\mathcal{J}_{n,\neq }\left( 
\tilde{L}_{n}\right) +\left( \frac{1}{n}-\frac{1-a}{\beta _{n}}\right) 
\mathcal{V}(\bar{L}_{n})+\frac{1}{\beta _{n}}\sum_{i=1}^{n}\mathcal{R}\left(
\mu ^{\ast }|e^{-(1-a)V}\ell \right) \right] \\
& \quad =\mathbb{E}\left[ f\left( \tilde{L}_{n}\right) \right] +\frac{n-1}{n}%
\mathcal{J}(\mu ^{\ast })+\left( \frac{1}{n}-\frac{1-a}{\beta _{n}}\right) 
\mathcal{V}(\mu ^{\ast })+\frac{n}{\beta _{n}}\mathcal{R}\left( \mu ^{\ast
}|e^{-(1-a)V}\ell \right) .
\end{split}%
\end{equation*}%
Thus, $\tilde{L}_{n}\overset{w}{\rightarrow }\mu $, the dominated
convergence theorem and the fact that $n/\beta _{n}\rightarrow 0$ imply 
\begin{equation*}
\limsup_{n\rightarrow \infty }\left( \mathbb{E}\left[ f\left( \tilde{L}%
_{n}\right) \right] +\frac{n}{n-1}\mathcal{J}(\mu ^{\ast })+\left( \frac{1}{n%
}-\frac{1-a}{\beta _{n}}\right) \mathcal{V}(\mu ^{\ast })+\frac{n}{\beta _{n}%
}\mathcal{R}\left( \mu ^{\ast }|e^{-(1-a)V}\ell \right) \right)
\end{equation*}%
is equal to $f\left( \mu ^{\ast }\right) +\mathcal{J}\left( \mu ^{\ast
}\right) $. Thus, we have shown that 
\begin{align*}
& \limsup_{n\rightarrow \infty }\inf_{\{\bar{\mu}_{i}^{n}\}}\mathbb{E}\left[
f\left( \bar{L}_{n}\right) +\mathcal{J}_{n,\neq }\left( \bar{L}_{n}\right)
+\left( \frac{1}{n}-\frac{1-a}{\beta _{n}}\right) \mathcal{V}(\bar{L}_{n})+%
\frac{1}{\beta _{n}}\sum_{i=1}^{n}\mathcal{R}\left( \bar{\mu}%
_{i}^{n}|e^{-(1-a)V}\ell \right) \right] \\
& \quad \leq \limsup_{n\rightarrow \infty }\mathbb{E}\left[ f\left( \tilde{L}%
_{n}\right) +\mathcal{J}_{n,\neq }\left( \tilde{L}_{n}\right) +\left( \frac{1%
}{n}-\frac{1-a}{\beta _{n}}\right) \mathcal{V}(\tilde{L}_{n})+\frac{1}{\beta
	_{n}}\sum_{i=1}^{n}\mathcal{R}\left( \mu ^{\ast }|e^{-(1-a)V}\ell \right) %
\right] \\
& \quad =f\left( \mu ^{\ast }\right) +\mathcal{J}\left( \mu ^{\ast }\right)
\\
& \quad \leq \inf_{\mu \in S}\{f\left( \mu \right) +\mathcal{J}\left( \mu
\right) \}+\epsilon .
\end{align*}%
Since $\epsilon >0$ is arbitrary, we obtain the upper bound \eqref{prom2},
thus completing the proof of Theorem \ref{biggernspeed}. \\

\noindent 
{ 
\noindent
    {\bf Acknowledgments. }  We would like to thank a referee of the first
    version of this article \cite{DupLasRam15v1} for pointing out a
    small technical error in Lemma 1.8 therein.  However, this did not affect
    the validity of the core arguments in \cite{DupLasRam15v1} and,  
    since we had in the meanwhile
     identified ways to extend the paper more substantially resulting in the
    current version, which no longer
    relied on this lemma, this did not have any ramifications for the current version.
    We would also like to thank both referees of the
    present article for their valuable feedback that led to improvements
    in the exposition. 
} 

\appendix

{
	
	\section{The Weak Convergence Approach to Large Deviations}
	\label{sec:weakconv}
	
	Here, we provide a brief outline to the weak convergence approach to large deviations. 
	The weak convergence approach to large deviations was initiated in the book
	\cite{Dupuis}. Using a test function characterization and representations for
	exponential integrals in terms of relative entropy, it converts the problem of
	proving an LDP  to that of analyzing the asymptotics of
	related variational problems, with the asymptotic analysis done via weak
	convergence. One of the original motivations for the approach was the
	expectation that, since weak convengence methods are well suited to the
	analysis of problems involving nonsmoothness and singular behaviors, it would
	be a natural tool for large deviation problems with similar features, as in
	the present paper.

	A conceptual picture of the approach is as follows. Suppose we want to show a
	sequence of random variables $\{X^{n}\}$ that take values in some space
	$\mathcal{X}$ satisfies a large deviation principle with rate function
	$I:\mathcal{X}\rightarrow\lbrack0,\infty]$ and speed $\{n\}$.  Then under
	suitable structural assumptions on $\mathcal{X}$, it is enough to show that
	for every bounded and continuous function $f:\mathcal{X}\rightarrow\mathbb{R}%
	$, as $n \rightarrow \infty$, 
	\begin{equation}
	-\frac{1}{n}\log\mathbb{E}\left[e^{-f(X^{n})}\right]\rightarrow\inf_{x\in\mathcal{X}}
	\left[  f(x)+I(x)\right]  .\label{eqn:LPL}%
	\end{equation}
	Under minimal conditions for discrete index models and mild conditions for
	continuous models, one can prove convenient relative entropy-based variational
	representations of the form 
	\[
	-\frac{1}{n}\log\mathbb{E}\left[e^{-f(X^{n})}\right]=\inf\mathbb{E}\left[  f(\bar{X}^{n})+C(X^{n}:\bar{X}^{n})\right]  ,
	\]
	where $\bar{X}^{n}$ is a \textquotedblleft controlled\textquotedblright%
	\ version of $X^{n}$, $C(X^{n}:\bar{X}^{n})$ is a non-negative cost
	for perturbing from the original distribution to that of $\bar{X}^{n}$, and
	the infimum is over all possible perturbations. (For the most complete account
	of such representations, see \cite{Dupuis}.)  Typically, one can show
	that boundedness of the expected costs $\mathbb{E}\left[  C(X^{n}:\bar{X}%
	^{n})\right]$ will imply tightness of  $\{\bar{X}^{n}\}$, and when proving a
	lower bound such boundedness can be assumed without loss since $f$ is bounded.
	Assuming tightness, one then establishes the lower bound (which corresponds to
	the large deviation upper bound) by showing that if $\bar{X}^{n} \rightarrow
	\bar{X}$ in distribution, then $\liminf_{n\rightarrow\infty}
	\mathbb{E}\left[  C(X^{n}:\bar{X}^{n})\right]  \geq\mathbb{E}[I(\bar{X})]$,
	and therefore
	\[
	\liminf_{n\rightarrow\infty}\mathbb{E}\left[  f(\bar{X}^{n})+C(X^{n}:\bar{X}^{n})\right]  \geq\mathbb{E}\left[  f(\bar{X})+I(\bar
	{X})\right]  \geq\inf_{x\in\mathcal{X}}\left[  f(x)+I(x)\right]  .
	\]
	Since $\{\bar{X}^{n}\}$ is arbitrary, this gives the needed bound (in fact, in
	this part of the analysis one often identifies a candidate for the rate function).
	
	The reverse bound is obtained as follows. Given any $x^{\ast}$ that is within
	$\varepsilon>0$ of the infimum of $\inf_{x\in\mathcal{X}}\left[
	f(x)+I(x)\right]  $, one identifies controls that will drive $\bar{X}^{n}$ to
	$x^{\ast}$ (recall that large deviations uses a law of large numbers scaling),
	and with costs that satisfy $\limsup_{n\rightarrow\infty}\mathbb{E}\left[
	C(X^{n}:\bar{X}^{n})\right]  \leq I(x^{\ast})$. The reverse bound
	follows since $\varepsilon>0$ is arbitrary, and together the bounds give
	(\ref{eqn:LPL}). To successfully carry out these steps, one typically needs a
	very good understanding of the law of large numnbers analysis of the original system
	$\{X^{n}\}$, since the weak convergence analysis ends up being a law of large numbers 
	analysis of the controlled versions, and methods that are useful for the first
	problem can often be adapted to deal with the second.
}

\section{Proof of Lemma \protect\ref{phi3}}

\label{sec-apA}

The proof of Lemma \ref{phi3} is based on two preliminary results,
established in Lemma \ref{phi1} and Lemma \ref{phi2} below.

\begin{lemma}
	\label{phi1} Let $\nu\in\mathcal{P}\left( \mathbb{R}^{m}\right) $ and let $%
	\bar{\psi}:\mathbb{R}^{m}\rightarrow\mathbb{R}_{+}$ be measurable. Then 
	\begin{equation}
	\int_{\mathbb{R}^{m}}e^{\lambda\bar{\psi}\left( \mathbf{z}\right) }\nu\left(
	d\mathbf{z}\right) <\infty  \label{lambda}
	\end{equation}
	for all $\lambda<\infty$ if and only if there exists a convex, increasing
	and superlinear function $\bar{\phi}:\mathbb{R}_{+}\rightarrow\mathbb{R}$
	such that 
	\begin{equation}  \label{nu-bd}
	\int_{\mathbb{R}^{m}}e^{\bar{\phi}\left( \bar{\psi}\left( \mathbf{z}\right)
		\right) }\nu\left( d\mathbf{z}\right) <\infty.
	\end{equation}
\end{lemma}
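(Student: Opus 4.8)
The plan is to prove the two implications separately. The easy direction is ($\Leftarrow$): if \eqref{nu-bd} holds for some convex, increasing, superlinear $\bar\phi$, then for any fixed $\lambda<\infty$, superlinearity of $\bar\phi$ gives a constant $c_\lambda$ with $\lambda s \le \bar\phi(s) + c_\lambda$ for all $s \ge 0$ (since $\bar\phi(s)/s \to \infty$, eventually $\bar\phi(s) \ge \lambda s$, and on the remaining compact range $\bar\phi$ is bounded below). Hence $e^{\lambda\bar\psi(\mathbf z)} \le e^{c_\lambda} e^{\bar\phi(\bar\psi(\mathbf z))}$, and integrating against $\nu$ yields \eqref{lambda}. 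This step is routine.

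The substantive direction is ($\Rightarrow$). Assume $\int e^{\lambda\bar\psi}\,d\nu < \infty$ for every $\lambda < \infty$. First I would reduce to a one-dimensional statement by pushing $\nu$ forward under $\bar\psi$: let $\theta \doteq \bar\psi_\#\nu \in \mathcal P(\mathbb R_+)$, so that the hypothesis reads $\int_{\mathbb R_+} e^{\lambda t}\,\theta(dt) < \infty$ for all $\lambda$, i.e.\ $\theta$ has all exponential moments, and the goal is to produce a convex increasing superlinear $\bar\phi$ with $\int e^{\bar\phi(t)}\,\theta(dt) < \infty$. The natural construction is to pick an increasing sequence of integers $\lambda_k \uparrow \infty$ chosen so fast that $\int e^{\lambda_k t}\,\theta(dt) =: m_k$ is finite for each $k$, then select points $t_k \uparrow \infty$ so that $\theta\big((t_k,\infty)\big) \le 2^{-k} e^{-\lambda_{k+1} t_k}$ (possible since $\theta$ has all exponential moments, so the tail decays faster than any exponential). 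Define $\bar\phi$ to be the piecewise-linear convex function that has slope $\lambda_k$ on $[t_{k-1}, t_k]$; by construction its slopes increase to infinity, so $\bar\phi$ is convex, increasing and superlinear. The remaining task is to bound $\int e^{\bar\phi(t)}\,\theta(dt)$: split the integral over the intervals $(t_{k-1}, t_k]$, on each of which $\bar\phi(t) \le \lambda_k t + b_k$ for an explicit intercept $b_k$, so $\int_{(t_{k-1},t_k]} e^{\bar\phi(t)}\,\theta(dt) \le e^{b_k}\,\theta\big((t_{k-1},\infty)\big) \le e^{b_k} \cdot 2^{-(k-1)} e^{-\lambda_k t_{k-1}}$, and the intercept $b_k$ can be controlled against $\lambda_k t_{k-1}$ by choosing the $t_k$ (equivalently the gaps) large enough, making the series $\sum_k$ converge.

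The main obstacle is bookkeeping the two interleaved sequences $(\lambda_k)$ and $(t_k)$ so that all three requirements hold simultaneously: (i) each $e^{\lambda_k \cdot}$ is $\theta$-integrable (automatic from the hypothesis), (ii) the piecewise-linear $\bar\phi$ stays below some line $\lambda_k t + b_k$ on the $k$-th piece with the intercepts $b_k$ not growing too fast, and (iii) the tail bound on $\theta\big((t_{k-1},\infty)\big)$ beats $e^{b_k}$ with a summable remainder. This is a standard diagonal-type construction; the cleanest route is to first fix $\lambda_k = k$, then recursively choose $t_k$ large enough to absorb the intercept $b_k = \sum_{j\le k}(\lambda_j - \lambda_{j+1}) t_j$-type term and to force the tail of $\theta$ below $2^{-k} e^{-\lambda_{k+1} t_k}$, using that $\int e^{(\lambda_{k+1}+1) t}\,\theta(dt) < \infty$ implies $\theta\big((s,\infty)\big) = o(e^{-\lambda_{k+1} s})$ as $s\to\infty$. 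Finiteness of $\int e^{\bar\phi}\,d\nu = \int e^{\bar\phi}\,d\theta$ then follows by summing the geometric-type bound over $k$, and adding the contribution of the bounded region $[0,t_0]$, which is trivially finite since $\bar\phi$ is bounded there and $\theta$ is a probability measure.
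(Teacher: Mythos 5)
Your overall strategy coincides with the paper's: the ($\Leftarrow$) direction via $\lambda s\le\bar{\phi}(s)+c_{\lambda}$ is identical, and for ($\Rightarrow$) both arguments construct a piecewise-linear $\bar{\phi}$ whose slopes $\lambda_k=k$ increase to infinity, with thresholds chosen using the finiteness of all exponential moments so that the contributions of the successive pieces form a convergent series (your reduction to the pushforward $\theta$ of $\nu$ under $\bar{\psi}$ is cosmetic). However, one displayed step in your ($\Rightarrow$) bookkeeping is false as written: from $\bar{\phi}(t)\le\lambda_k t+b_k$ on $(t_{k-1},t_k]$ you pass to $\int_{(t_{k-1},t_k]}e^{\bar{\phi}(t)}\,\theta(dt)\le e^{b_k}\,\theta\bigl((t_{k-1},\infty)\bigr)$, which silently discards the factor $e^{\lambda_k t}$; on that piece this factor is as large as $e^{\lambda_k t_k}$, and it is precisely the quantity that must be controlled. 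Your proposed remedy of taking the gaps $t_k-t_{k-1}$ large makes the discarded factor bigger, while capping $t_k$ to tame it can collide with your tail requirement $\theta\bigl((t_k,\infty)\bigr)\le 2^{-k}e^{-\lambda_{k+1}t_k}$ when the exponential moments of $\theta$ grow very fast, so the recursion as described does not close.

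The repair stays entirely within your scheme: control the tail \emph{integral} of the exponential rather than the tail probability. Since $e^{\lambda_k t}\le e^{(\lambda_k+1)t-t_{k-1}}$ for $t>t_{k-1}$, you get $\int_{(t_{k-1},t_k]}e^{\bar{\phi}(t)}\,\theta(dt)\le e^{b_k}e^{-t_{k-1}}\int e^{(\lambda_k+1)t}\,\theta(dt)$, and because $b_k=\bar{\phi}(t_{k-1})-\lambda_k t_{k-1}=b_{k-1}-(\lambda_k-\lambda_{k-1})t_{k-1}\rightarrow-\infty$ as $t_{k-1}\rightarrow\infty$, it suffices to choose each breakpoint recursively large enough that $e^{b_k}\int e^{(\lambda_k+1)t}\,\theta(dt)\le 2^{-k}$; these are lower-bound constraints only, so there is no conflict. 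The paper's own bookkeeping avoids intercepts altogether: it picks $M_k$ so that $\int_{\{\bar{\psi}\ge M_k\}}e^{k\bar{\psi}}\,d\nu<2^{-k}$ and gives $\bar{\phi}$ slope $k$ on $[M_k,M_{k+1}]$, so on the $k$-th region $e^{\bar{\phi}(\bar{\psi})}\le e^{k\bar{\psi}}$ and each piece contributes at most $2^{-k}$ directly. With either of these fixes your plan is complete and amounts to the paper's proof.
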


\begin{proof}
	($\Rightarrow$) If \eqref{lambda} holds, for every $k\in\mathbb{N}$ we can
	find $M_{k}\in(0,\infty)$ such that
	\[
	\int_{\{\mathbf{z}:\,\bar{\psi}(\mathbf{z})\geq M_{k}\}}e^{k\bar{\psi}\left(
		\mathbf{z}\right)  }\nu\left(  d\mathbf{z}\right)  <\frac{1}{2^{k}}.
	\]
	Without loss of generality, we can assume $M_{k+1}\geq M_{k}$, and
	$\lim_{k\rightarrow\infty}M_{k}=\infty.$ We then define a continuous function
	$\bar{\phi}$ according to $d\bar{\phi}\left(  s\right)  /ds=k,s\in (M_{k},M_{k+1})$, and $\bar{\phi}\left(  s\right)  =M_{1},s\in\lbrack0,M_{1}]$,
	which implies $\lim_{s\rightarrow\infty}\frac{\bar{\phi}\left(  s\right)  }%
	{s}=\infty$ and also that $\bar{\phi}$ is convex and increasing. Finally, we
	have
	\[
	\int_{\mathbb{R}^{m}}e^{\bar{\phi}\left(  \bar{\psi}\left(  \mathbf{z}\right)
		\right)  }\nu\left(  d\mathbf{z}\right)  \leq e^{M_{1}}+\sum_{k=1}^{\infty
	}\int_{\{\mathbf{z}:\,\bar{\psi}(\mathbf{z})\geq M_{k}\}}e^{k\bar{\psi}\left(
		\mathbf{z}\right)  }\nu\left(  d\mathbf{z}\right)  \leq e^{M_{1}}+\sum
	_{k=1}^{\infty}\frac{1}{2^{k}}<\infty.
	\]

	($\Leftarrow$) Let $\bar{\phi}$ be as in the statement of the lemma. Since
	$\bar{\phi}$ satisfies $\lim_{s\rightarrow\infty}\frac{\bar{\phi}\left(
		s\right)  }{s}=\infty$, for every $\lambda<\infty$ there exists $M_{\lambda
	}<\infty$ such that $\bar{\phi}\left(  s\right)  \geq$ $\lambda s$ if $s\geq
	M_{\lambda}$. Then we have
	\begin{align*}
	\int_{\mathbb{R}^{m}}e^{\lambda\bar{\psi}\left(  \mathbf{z}\right)  }%
	\nu\left(  d\mathbf{z}\right)   &  =\int_{\mathbb{R}^{m}}1_{\{\bar{\psi
		}\left(  \mathbf{z}\right)  <M_{\lambda}\}}e^{\lambda\bar{\psi}\left(
		\mathbf{z}\right)  }\nu\left(  d\mathbf{z}\right)  +\int_{\mathbb{R}^{m}%
	}1_{\{\bar{\psi}\left(  \mathbf{z}\right)  \geq M_{\lambda}\}}e^{\lambda
		\bar{\psi}\left(  \mathbf{z}\right)  }\nu\left(  d\mathbf{z}\right) \\
	&  \leq e^{\lambda M_{\lambda}}+\int_{\mathbb{R}^{m}}e^{\bar{\phi}\left(
		\bar{\psi}\left(  \mathbf{z}\right)  \right)  }\nu\left(  d\mathbf{z}\right)
	<\infty.
	\end{align*}
	
\end{proof}

\begin{lemma}
	\label{phi2} Let $\nu\in\mathcal{P}\left( \mathbb{R}^{m}\right) $ and let $%
	\bar{\psi}:\mathbb{R}^{m}\rightarrow\mathbb{R}_{+}$ be measurable. Then 
	\begin{equation}
	\int_{\mathbb{R}^{m}}e^{\lambda\bar{\psi}\left( \mathbf{z}\right) }\nu\left(
	d\mathbf{z}\right) <\infty  \label{lambda2}
	\end{equation}
	for all $\lambda<\infty$ if and only if there exists a convex, increasing
	and superlinear function $\bar{\phi}:\mathbb{R}_{+}\rightarrow\mathbb{R}$
	and a constant $C<\infty$ such that for any $\mu\in\mathcal{P}\left( \mathbb{%
		R}^{m}\right) $, 
	\begin{equation}
	\int_{\mathbb{R}^{m}}\bar{\phi}\left( \bar{\psi}\left( \mathbf{z}\right)
	\right) \mu\left( d\mathbf{z}\right) \leq\mathcal{R}\left( \mu|\nu\right) +C.
	\label{tigh}
	\end{equation}
\end{lemma}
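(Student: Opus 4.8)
The plan is to deduce this from Lemma \ref{phi1} together with the Donsker--Varadhan variational formula for relative entropy, which states that for any bounded measurable $g:\mathbb{R}^m \to \mathbb{R}$,
\[
\int_{\mathbb{R}^m} g\, d\mu \le \log\int_{\mathbb{R}^m} e^{g}\, d\nu + \mathcal{R}(\mu|\nu),
\]
and more generally the same holds for any $g$ bounded from below as long as $\int e^g\,d\nu<\infty$ (by monotone approximation). For the direction $(\Leftarrow)$, assume \eqref{tigh} holds with some convex, increasing, superlinear $\bar\phi$ and constant $C$. Apply \eqref{tigh} to the measures $\mu_A \doteq \nu(\cdot\,|\,\{\bar\psi \ge M\})$ for suitable truncation levels, or more directly observe that for $\lambda<\infty$, choosing $\mu$ absolutely continuous with respect to $\nu$ with density proportional to $e^{\lambda\bar\psi}\mathbf{1}_{\{\bar\psi \le N\}}$ and letting $N\to\infty$, the left-hand side of \eqref{tigh} grows like $\int \bar\phi(\bar\psi)\,d\mu$ while $\mathcal R(\mu|\nu)$ can be computed explicitly; since $\bar\phi$ is superlinear this forces $\int e^{\lambda\bar\psi}\,d\nu<\infty$. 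A cleaner route: \eqref{tigh} with a superlinear $\bar\phi$ immediately implies, via the inequality $\lambda s \le \bar\phi(s) + \bar\phi^*(\lambda)$ (Young's inequality for the convex conjugate $\bar\phi^*$, finite everywhere since $\bar\phi$ is superlinear), that $\int \lambda\bar\psi\,d\mu \le \mathcal R(\mu|\nu) + C + \bar\phi^*(\lambda)$ for all $\mu$; taking the supremum over $\mu$ and using Donsker--Varadhan in reverse (the variational formula is an equality) yields $\log\int e^{\lambda\bar\psi}\,d\nu \le C + \bar\phi^*(\lambda) < \infty$, which is \eqref{lambda2}.

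For the direction $(\Rightarrow)$, assume \eqref{lambda2}. By Lemma \ref{phi1} applied with the same $\nu$ and $\bar\psi$, there is a convex, increasing, superlinear $\bar\phi_0:\mathbb{R}_+\to\mathbb{R}$ with $\int e^{\bar\phi_0(\bar\psi)}\,d\nu < \infty$. Set $C \doteq \log\int_{\mathbb{R}^m} e^{\bar\phi_0(\bar\psi(\mathbf{z}))}\,\nu(d\mathbf{z}) < \infty$ and take $\bar\phi \doteq \bar\phi_0$. Then for any $\mu\in\mathcal P(\mathbb{R}^m)$, the Donsker--Varadhan bound applied to $g = \bar\phi_0\circ\bar\psi$ (which is bounded below, being the composition of a function bounded below with a nonnegative function, and has $\int e^g\,d\nu<\infty$) gives
\[
\int_{\mathbb{R}^m} \bar\phi\big(\bar\psi(\mathbf{z})\big)\,\mu(d\mathbf{z}) \le \log\int_{\mathbb{R}^m} e^{\bar\phi(\bar\psi(\mathbf{z}))}\,\nu(d\mathbf{z}) + \mathcal R(\mu|\nu) = \mathcal R(\mu|\nu) + C,
\]
which is exactly \eqref{tigh}. (One should check the Donsker--Varadhan inequality is valid for $g$ merely bounded below with $\int e^g\,d\nu<\infty$: approximate by $g\wedge N$, apply the bounded version, and pass to the limit using monotone convergence on the left and dominated/monotone convergence on the right; if $\mathcal R(\mu|\nu)=\infty$ there is nothing to prove.)

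The main obstacle is essentially bookkeeping rather than depth: one must be careful that $\bar\phi_0\circ\bar\psi$ need not be bounded, so the elementary form of the Gibbs variational principle does not apply directly and must be extended by truncation; and in the $(\Leftarrow)$ direction one must ensure the convex conjugate $\bar\phi^*$ is finite for every $\lambda$, which is exactly the content of superlinearity of $\bar\phi$ ($\bar\phi^*(\lambda) = \sup_s(\lambda s - \bar\phi(s)) < \infty$ because $\bar\phi(s)/s \to \infty$). Both points are routine, so the lemma follows readily once Lemma \ref{phi1} is in hand; indeed, the statement is really just a relative-entropy reformulation of the exponential-integrability condition \eqref{lambda2}, mediated by the duality between superlinear convex functions and everywhere-finite convex conjugates.
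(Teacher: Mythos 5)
Your proposal is correct and follows essentially the same route as the paper: the forward direction invokes Lemma \ref{phi1} to produce $\bar{\phi}$ with $\int e^{\bar{\phi}(\bar{\psi})}\,d\nu<\infty$ and then applies the Donsker--Varadhan/Gibbs variational formula (Proposition 4.5.1 of \cite{Dupuis1997a} in the paper) to obtain \eqref{tigh}, while the converse again uses the variational formula in reverse. The only cosmetic difference is in the converse, where you bound $\log\int e^{\lambda\bar{\psi}}\,d\nu$ directly via Young's inequality $\lambda s\leq\bar{\phi}(s)+\bar{\phi}^{*}(\lambda)$, whereas the paper bounds $\log\int e^{\bar{\phi}(\bar{\psi})}\,d\nu$ and then appeals to the easy direction of Lemma \ref{phi1}; both rest on the same superlinearity/duality fact, and your truncation remarks for unbounded integrands are the appropriate justification.
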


\begin{proof}
	($\Rightarrow$) First assume that \eqref{lambda2} holds. Then by the previous
	lemma there exists a positive convex function $\bar{\phi}:\mathbb{R}%
	\rightarrow\mathbb{R},$ with $\lim_{s\rightarrow\infty}\frac{\bar{\phi}\left(
		s\right)  }{s}=\infty$ such that \eqref{nu-bd} holds. 
	Since $-\bar{\phi} \leq0$, by using Proposition 4.5.1 in  \cite{Dupuis} with
	$k=-\bar{\phi}$, we get
	\begin{equation}
	\sup_{\mu\in\mathcal{P}\left(  \mathbb{R}^{m}\right)  :\mathcal{R}\left(
		\mu|\nu\right)  <\infty}\left\{  \int_{\mathbb{R}^{m}}\bar{\phi}\left(
	\bar{\psi}\left(  \mathbf{z}\right)  \right)  \mu\left(  d\mathbf{z}\right)
	-\mathcal{R}\left(  \mu|\nu\right)  \right\}  =\log\int_{\mathbb{R}^{m}%
	}e^{\bar{\phi}\left(  \bar{\psi}\left(  \mathbf{z}\right)  \right)  }%
	\nu\left(  d\mathbf{z}\right)  <\infty, \label{DV}%
	\end{equation}
	from which we obtain
	\[
	\int_{\mathbb{R}^{m}}\bar{\phi}\left(  \bar{\psi}\left(  \mathbf{z}\right)
	\right)  \mu\left(  d\mathbf{z}\right)  \leq\mathcal{R}\left(  \mu|\nu\right)
	+\log\int_{\mathbb{R}^{m}}e^{\bar{\phi}\left(  \bar{\psi}\left(
		\mathbf{z}\right)  \right)  }\nu\left(  d\mathbf{z}\right)
	\]
	for all $\mu\in\mathcal{P}\left(  \mathbb{R}^{m}\right)  $ with $\mathcal{R}%
	\left(  \mu|\nu\right)  <\infty$. Thus, \eqref{tigh} follows.
	
	($\Leftarrow$) For the converse, if we assume that \eqref{tigh} is true, then
	we have
	\[
	\sup_{\mu\in\mathcal{P}\left(  \mathbb{R}^{m}\right)  }\left\{  \int
	_{\mathbb{R}^{m}}\bar{\phi}\left(  \bar{\psi}\left(  \mathbf{z}\right)
	\right)  \mu\left(  d\mathbf{z}\right)  -\mathcal{R}\left(  \mu|\nu\right)
	\right\}  \leq C,
	\]
	and \eqref{DV} implies that $\log\int_{\mathbb{R}^{m}}e^{\bar{\phi}\left(
		\bar{\psi}\left(  \mathbf{z}\right)  \right)  }\nu\left(  d\mathbf{z}\right)
	$ is bounded, which proves \eqref{lambda2}.
\end{proof}

\begin{proof}
	[Proof of Lemma \ref{phi3}]Consider the probability measure on $\mathbb{R}%
	^{d}\times\mathbb{R}^{d}$ defined by
	\[
	\nu(d\mathbf{x}d\mathbf{y})=\frac{1}{Z}e^{-\left(  V\left(  \mathbf{x}\right)
		+V\left(  \mathbf{y}\right)  +W\left(  \mathbf{x},\mathbf{y}\right)  \right)
	}\ell(d\mathbf{x})\ell(d\mathbf{y}),
	%\in\mathcal{P}%(\mathbb{R}^{d}\times\mathbb{R}^{d}),
	\]
	where $Z$ is the normalization constant that makes $\nu$ a probability
	measure; the finiteness of $Z$ follows on setting $\lambda=0$ in
	\eqref{alalal}. Since $\psi$ satisfies \eqref{alalal}, we can apply Lemma
	\ref{phi2} with $\bar{\psi}(\mathbf{x},\mathbf{y})=\psi(\mathbf{x}%
	)+\psi(\mathbf{y})$ to conclude that there exists a convex and increasing
	function $\bar{\phi} :\mathbb{R}_{+}\mapsto\mathbb{R}$ with $\lim
	_{s\rightarrow\infty}\bar{\phi}(s)/s=\infty$ such that for any $\zeta
	\in\mathcal{P}(\mathbb{R}^{d}\times\mathbb{R}^{d})$,
	\begin{equation}
	\label{last display}\int_{\mathbb{R}^{d}\times\mathbb{R}^{d}}\bar{\phi}\left(
	\psi\left(  \mathbf{x}\right)  +\psi\left(  \mathbf{y}\right)  \right)
	\zeta\left(  d\mathbf{x}d\mathbf{y}\right)  \leq\mathcal{R}\left(
	\zeta|e^{-\left(  V\left(  \mathbf{x}\right)  +V\left(  \mathbf{y}\right)
		+W\left(  \mathbf{x},\mathbf{y}\right)  \right)  }\ell(d\mathbf{x}%
	)\ell(d\mathbf{y})/Z\right)  +C.
	\end{equation}

	We claim, and prove below, that for every $\zeta$ we have
	\begin{equation}\label{ab}
	\int_{\mathbb{R}^{d}\times\mathbb{R}^{d}}\bar{\phi}\left(  \psi\left(
	\mathbf{x}\right)  +\psi\left(  \mathbf{y}\right)  \right)  \zeta\left(
	d\mathbf{x}d\mathbf{y}\right)  \leq2\mathfrak{J}_{a}\left(  \zeta\right)
	+\mathcal{R}\left(  \zeta|e^{-(1-a)V}\ell\otimes e^{-(1-a)V}\ell\right)  +\log
	Z + C.
	\end{equation}
	If the claim
	holds, then since $\bar{\phi}$ is increasing and since $\psi$ and
	${\mathcal{R}}$ are positive, for $i=1,2,$ we have
	\begin{equation}
	\int_{\mathbb{R}^{d}}\bar{\phi}\left(  \psi\left(  \mathbf{x}\right)  \right)
	(\pi_{\#}^{i}\zeta)\left(  d\mathbf{x}\right)  \leq2\mathfrak{J}_{a}\left(
	\zeta\right)  +2\mathcal{R}\left(  \zeta|e^{-(1-a)V}\ell\otimes e^{-(1-a)V}%
	\ell\right)  +C+\log Z,\label{above}%
	\end{equation}
	where recall from Definition \ref{projection} and Definition \ref{push} that
	$\pi_{\#}^{i}\zeta$ represents the $i$th marginal of $\zeta$.  Adding the inequality
	\eqref{above} for $i=1$ and $i=2$ we have
	\[
	\int_{\mathbb{R}^{d}}\bar{\phi}\left(  \psi\left(  \mathbf{x}\right)  \right)
	(\pi_{\#}^{1}\zeta)\left(  d\mathbf{x}\right)  +\int_{\mathbb{R}^{d}}\bar
	{\phi}\left(  \psi\left(  \mathbf{x}\right)  \right)  (\pi_{\#}^{2}%
	\zeta)\left(  d\mathbf{x}\right)  \leq4\mathfrak{J}_{a}\left(  \zeta\right)
	+4\mathcal{R}\left(  \zeta|e^{-(1-a)V}\ell\otimes e^{-(1-a)V}\ell\right)
	+2(C+\log Z).
	\]
	If $\zeta\in\Pi(\mu,\mu)$ then $\pi_{\#}^{1}\zeta=\pi_{\#}^{2}\zeta=\mu$.
	Dividing both sides by $2$, equation (\ref{m}), which is the conclusion of
	Lemma \ref{phi3}, holds with $\phi:=[\bar{\phi}-C-\log Z]/2$.
	
	We now turn to the proof of the claim \eqref{above}. We can assume without
	loss of generality that $\zeta(d\mathbf{x}d\mathbf{y})$ has a density with
	respect to the measure $e^{-(1-a)V}\ell\otimes e^{-(1-a)V}\ell$, because
	otherwise \eqref{above} holds trivially, since $W(\mathbf{x},\mathbf{y})
	+aV(\mathbf{x}) + aV(\mathbf{y})$ is bounded from below. Denoting this density
	(with some abuse of notation) by $\zeta(\mathbf{x},\mathbf{y}),$
	\eqref{last display} then gives
	\[%
	\begin{split}
	&  \int_{\mathbb{R}^{d}\times\mathbb{R}^{d}}\bar{\phi}\left(  \psi\left(
	\mathbf{x}\right)  +\psi\left(  \mathbf{y}\right)  \right)  \zeta\left(
	d\mathbf{x}d\mathbf{y}\right) \\
	&  \quad\leq\int_{\mathbb{R}^{d}\times\mathbb{R}^{d}}\zeta(\mathbf{x}
	,\mathbf{y})\log\frac{\zeta(\mathbf{x},\mathbf{y})}{e^{-W\left(
			\mathbf{x},\mathbf{y} \right)  +a \left( V\left(  \mathbf{x}\right)  +V\left(
			\mathbf{y}\right) \right)  }/Z}e^{- (1-a)\left( V\left(  \mathbf{x}\right)
		+V\left(  \mathbf{y}\right) \right) }\ell(d\mathbf{x})\ell(d\mathbf{y})+C\\
	&  \quad\leq\int_{\mathbb{R}^{d}\times\mathbb{R}^{d}}\left( W\left(
	\mathbf{x},\mathbf{y} \right)  +a \left( V\left(  \mathbf{x}\right)  +V\left(
	\mathbf{y}\right) \right)  \right) \zeta(\mathbf{x},\mathbf{y})e^{-
		(1-a)\left( V\left(  \mathbf{x}\right)  +V\left(  \mathbf{y}\right) \right)
	}\ell(d\mathbf{x})\ell(d\mathbf{y})\\
	& \quad\quad+\int_{\mathbb{R}^{d}\times\mathbb{R}^{d}}\zeta(\mathbf{x}%
	,\mathbf{y})\log\zeta(\mathbf{x},\mathbf{y})e^{-(1-a)\left(  V\left(
		\mathbf{x}\right)  +V\left(  \mathbf{y}\right)  \right)  }\ell(d\mathbf{x}%
	)\ell(d\mathbf{y})+\log Z+C.
	\end{split}
	\]
	Therefore, recalling the definition of $\mathfrak{J}_{a}$ in \eqref{scr}, we
	have
	\[
	\int_{\mathbb{R}^{d}\times\mathbb{R}^{d}}\bar{\phi}\left(  \psi\left(
	\mathbf{x}\right)  +\psi\left(  \mathbf{y}\right)  \right)  \zeta\left(
	d\mathbf{x}d\mathbf{y}\right)  \leq2\mathfrak{J}_{a}\left(  \zeta\right)
	+\mathcal{R}\left(  \zeta|e^{-(1-a)V}\ell\otimes e^{-(1-a)V}\ell\right)  +\log
	Z + C,
	\]
	which completes the proof of the claim, and therefore the lemma.
\end{proof}

\section{Proof of Lemma \protect\ref{tightnessfunction}}

\label{sec-apB}

We first establish a preliminary result in Lemma \ref{Polish} below. Let $%
B(0,r)$ denote the closed ball about $0$ of radius $r$, and let $B^{c}(0,r)$
denote its complement.

\begin{lemma}
	\label{Polish} Let $\psi,\mathcal{P}_{\psi}(\mathbb{R}^{d}),$ and $d_{\psi}$
	be defined as in (\ref{cond-psi})-(\ref{psidef}). Then $d_{\psi}(\mu_{n},%
	\mu)\rightarrow0$ as $n\rightarrow\infty$ if and only if 
	\begin{equation}
	d_{w}(\mu_{n},\mu)\rightarrow0\hspace{4pt}\text{and}\hspace{4pt}%
	\lim_{r\rightarrow\infty}\sup_{n}\left\{ \int_{B^{c}(0,r)}\psi(\mathbf{x}%
	)\mu_{n}(d\mathbf{x})\right\} =0.  \label{equiv-conv}
	\end{equation}
	Furthermore, the metric space $(\mathcal{P}_{\psi}(\mathbb{R}^{d}),d_{\psi})$
	is separable.
\end{lemma}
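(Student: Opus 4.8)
The plan is to prove the equivalence \eqref{equiv-conv} by unpacking the definition of $d_\psi$ in \eqref{psidef}, and then to deduce separability from the known separability of $(\mathcal{P}(\mathbb{R}^d),d_w)$ together with a truncation argument. For the forward implication, suppose $d_\psi(\mu_n,\mu)\to 0$. Since $d_w(\mu_n,\mu)\le d_\psi(\mu_n,\mu)$, weak convergence is immediate; and since $\int\psi\,d\mu_n\to\int\psi\,d\mu<\infty$, the family $\{\int\psi\,d\mu_n\}$ is bounded, so by a uniform integrability argument (the measures $\psi\,d\mu_n$ have uniformly bounded total mass, and weak convergence of $\mu_n$ forces the tails $\int_{B^c(0,r)}\psi\,d\mu_n$ to vanish uniformly in $n$ as $r\to\infty$) one gets the second condition in \eqref{equiv-conv}. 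The clean way to see the tail control is: for each fixed $r$, $\mathbf{x}\mapsto\psi(\mathbf{x})(1-\mathbf{1}_{B(0,r)}(\mathbf{x}))$ is dominated above by a truncation $\psi\wedge K$ which is bounded continuous, plus the growth condition \eqref{cond-psi} lets us make $\psi>K$ only outside a large ball; combining $\int\psi\,d\mu_n\to\int\psi\,d\mu$ with $\int(\psi\wedge K)\,d\mu_n\to\int(\psi\wedge K)\,d\mu$ (bounded continuous) yields $\int(\psi-\psi\wedge K)\,d\mu_n\to\int(\psi-\psi\wedge K)\,d\mu$, and the latter tends to $0$ as $K\to\infty$; this gives the required uniformity.

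For the reverse implication, assume \eqref{equiv-conv}. We must show $\int\psi\,d\mu_n\to\int\psi\,d\mu$. Split $\int\psi\,d\mu_n=\int_{B(0,r)}\psi\,d\mu_n+\int_{B^c(0,r)}\psi\,d\mu_n$. The second term is uniformly small for large $r$ by hypothesis. For the first term, I would approximate $\psi\cdot\mathbf{1}_{B(0,r)}$ from above and below by bounded continuous functions agreeing with $\psi$ on $B(0,r-1)$ and on $B(0,r+1)^c$ respectively (using continuity of $\psi$ and Urysohn-type cutoffs), apply weak convergence to each, and control the error on the annulus $B(0,r+1)\setminus B(0,r-1)$ using the tail bound again plus the Portmanteau theorem (choosing $r$ so that $\mu(\partial B(0,r))=0$, which excludes only countably many $r$). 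Letting $r\to\infty$ gives convergence of the integrals, hence $d_\psi(\mu_n,\mu)\to 0$.

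For separability, I would take a countable dense subset $\mathcal{D}_0$ of $(\mathcal{P}(\mathbb{R}^d),d_w)$ — e.g. finitely supported measures with rational weights and rational atoms — intersected with $\mathcal{P}_\psi(\mathbb{R}^d)$ (such measures trivially lie in $\mathcal{P}_\psi(\mathbb{R}^d)$ since they have bounded support). Given an arbitrary $\mu\in\mathcal{P}_\psi(\mathbb{R}^d)$ and $\epsilon>0$, first truncate: set $\mu^{(r)}\doteq\mu(\cdot\cap B(0,r))+\mu(B^c(0,r))\delta_0$, which satisfies $d_\psi(\mu,\mu^{(r)})\to 0$ as $r\to\infty$ because the discarded tail of $\psi$ is integrable and vanishes; then approximate the compactly supported $\mu^{(r)}$ by an element of $\mathcal{D}_0$ supported in $B(0,r+1)$, on which $\psi$ is bounded, so that $d_w$-closeness of the approximant forces $d_\psi$-closeness (the $\psi$-integral term is controlled since both measures are supported in a fixed compact set where $\psi$ is bounded and continuous, hence the map $\nu\mapsto\int\psi\,d\nu$ restricted to measures on that compact set is $d_w$-continuous). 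I expect the main technical obstacle to be handling the annular error terms cleanly in the reverse implication — making sure the cutoff approximations of $\psi\cdot\mathbf{1}_{B(0,r)}$ are genuinely bounded continuous and that the Portmanteau/boundary-null-set bookkeeping is airtight — but this is routine real analysis rather than anything deep.
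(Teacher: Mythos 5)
Your proof of the equivalence \eqref{equiv-conv} is essentially the paper's argument: the forward tail bound via convergence of $\int\psi\,d\mu_n$ plus weak convergence of the truncated integrals (the paper restricts $\psi$ to $B(0,r)$ with $\mu(\partial B(0,r))=0$ where you truncate at level $K$ and invoke \eqref{cond-psi}; both are fine, and both need the same routine extra step you leave implicit, namely handling the finitely many initial indices separately to turn ``small for $n\ge n_0$'' into a genuine $\sup_n$ bound, each $\mu_n$ being individually in $\mathcal{P}_\psi$), and the reverse direction is the same ball/annulus splitting, with your Urysohn sandwich replacing the paper's direct appeal to weak convergence against a bounded function whose discontinuity set is $\mu$-null. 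Where you genuinely diverge is separability. The paper fixes a countable dense set $\{\mathbf{x}_n\}$, takes rational convex combinations of the Dirac masses $\delta_{\mathbf{x}_n}$, and verifies density by a single quantitative estimate: it covers $\mathbb{R}^d$ by small balls, discretizes the weights, and bounds $\sup_{f}\,|\int f\,d\mu-\int f\,d\nu|$ simultaneously over $f$ in the unit ball of bounded Lipschitz functions and $f=\psi$, using the comparison $d_w\le 3\sqrt{d_{BL}}$ from Dudley; the point of carrying $\psi$ along in the same supremum is to control the second term of $d_\psi$ directly. Your route is more modular: first truncate $\mu$ to $\mu^{(r)}=\mu(\cdot\cap B(0,r))+\mu(B^c(0,r))\delta_0$, which converges to $\mu$ in $d_\psi$ because the discarded $\psi$-mass is an integrable tail (and $\psi(0)<\infty$), then approximate the compactly supported $\mu^{(r)}$ in $d_w$ by rational discrete measures with atoms in $B(0,r+1)$, observing that on measures carried by a fixed compact set the functional $\nu\mapsto\int\psi\,d\nu$ is $d_w$-continuous (replace $\psi$ by $\psi\wedge M$ with $M=\sup_{B(0,r+1)}\psi$), so the $\psi$-term comes for free. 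This avoids the bounded-Lipschitz machinery and the delicate choice of rational weights in \eqref{sep-ineq4}, at the mild cost of a two-step approximation; both arguments are correct, and yours is arguably the more elementary and easier to check.
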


\begin{proof}
	($\Rightarrow$). Let $\mu_{n},n\in\mathbb{N}$, $\mu\in\mathcal{P}_{\psi
	}(\mathbb{R}^{d})$ be such that $d_{\psi}(\mu_{n},\mu)\rightarrow0.$ Since
	$d_{w}(\mu_{n},\mu)\leq d_{\psi}(\mu_{n},\mu)$, this implies $d_{w}(\mu
	_{n},\mu)\rightarrow0$. Let $\epsilon>0.$ By the integrability of $\psi$ there
	exists $r<\infty$ such that $\int_{B^{c}(0,r)}\psi(\mathbf{x})\mu
	(d\mathbf{x})<\frac{\epsilon}{3},$ and also $\mu(\partial B(0,r))=0.$ Hence,
	we have
	\begin{equation}%
	\begin{split}
	& \int_{B^{c}(0,r)}\psi(\mathbf{x})\mu_{n}(d\mathbf{x}) =\int_{B^{c}(0,r)}%
	\psi(\mathbf{x})(\mu_{n}(d\mathbf{x})-\mu(d\mathbf{x}))+\int_{B^{c}(0,r)}%
	\psi(\mathbf{x})\mu(d\mathbf{x})\\
	&  \leq\left\vert \int_{\mathbb{R}^{d}}\psi(\mathbf{x})\mu_{n}(d\mathbf{x}%
	)-\int_{\mathbb{R}^{d}}\psi(\mathbf{x})\mu(d\mathbf{x})\right\vert +\left\vert
	\int_{B(0,r)}\psi(\mathbf{x})\mu_{n}(d\mathbf{x})-\int_{B(0,r)}\psi
	(\mathbf{x})\mu(d\mathbf{x})\right\vert +\frac{\epsilon}{3}.
	\end{split}
	\label{bigll}%
	\end{equation}
	From the definition of $d_{\psi}$ in \eqref{psidef} and the nonnegativity of
	$d_{w}$, we can find $n_{0}\in\mathbb{N}$ such that $\forall n>n_{0},$ we
	have
	\[
	\left\vert \int_{\mathbb{R}^{d}}\psi(\mathbf{x})\mu_{n}(d\mathbf{x}%
	)-\int_{\mathbb{R}^{d}}\psi(\mathbf{x})\mu(d\mathbf{x})\right\vert
	<\frac{\epsilon}{3}.
	\]
	Since $\mu(\partial B(0,r))=0$, the $\mu$-measure of the discontinuity points
	of $\mathbf{x}\rightarrow\psi(\mathbf{x})1_{B(0,r)}(\mathbf{x})$ is zero.
	Since $\psi(\mathbf{x})$ can be extended outside of $B(0,r)$ to obtain a
	bounded and continuous function on $\mathbb{R}^{d}$, the fact that $d_{w}%
	(\mu_{n},\mu)\rightarrow0$ implies that there exists $n_{0}^{\prime}<\infty$
	such that $\forall n\geq n_{0}^{\prime}$,
	\begin{equation}
	\left\vert \int_{B(0,r)}\psi(\mathbf{x})\mu_{n}(d\mathbf{x})-\int_{B(0,r)}%
	\psi(\mathbf{x})\mu(d\mathbf{x})\right\vert <\frac{\epsilon}{3}. \label{twra}%
	\end{equation}
	Combining the above estimates for all terms in \eqref{bigll} we obtain
	\[
	\sup_{n\geq\max\{n_{0},n_{0}^{\prime}\}}\left\{  \int_{B^{c}(0,r)}%
	\psi(\mathbf{x})\mu_{n}(d\mathbf{x})\right\}  <\epsilon.
	\]
	Since $\psi$ is integrable with respect to each $\mu_{n}$, for all $n\leq
	\max\{n_{0},n_{0}^{\prime}\}$ we can find an $r_{n}<\infty$ such that
	$\int_{B^{c}(0,r_{n})}\psi(\mathbf{x})\mu_{n}(d\mathbf{x})<\epsilon.$ Taking
	$r^{\prime}=\max\{r_{1},...,r_{\max\{n_{0},n_{0}^{\prime}\}},r\}$ yields
	\[
	\sup_{n}\left\{  \int_{B^{c}(0,r^{\prime})}\psi(\mathbf{x})\mu_{n}%
	(d\mathbf{x})\right\}  <\epsilon.
	\]
	Since $\epsilon$ is arbitrary, the conclusion follows.
	
	($\Leftarrow$) To prove the converse, let $\mu_{n},n\in\mathbb{N}$, $\mu
	\in\mathcal{P}_{\psi}(\mathbb{R}^{d})$, be such that \eqref{equiv-conv}
	holds.
	%\[
	%d_{w}(\mu_{n},\mu)\rightarrow0\hspace{12pt}\text{and}\hspace{12pt}%
	%\lim_{r\rightarrow\infty}\sup_{n}\left\{  \int_{B^{c}(0,r)}\psi(\mathbf{x}%
	%)\mu_{n}(d\mathbf{x})\right\}  =0.
	%\]
	For $\epsilon>0$ there exists $r<\infty$ such that $\mu(\partial B(0,r))=0$
	and
	\[
	\sup_{n}\left\{  \int_{B^{c}(0,r)}\psi(\mathbf{x})\mu_{n}(d\mathbf{x}%
	)\right\}  <\frac{\epsilon}{3}\hspace{12pt}\text{and}\hspace{12pt}\int
	_{B^{c}(0,r)}\psi(\mathbf{x})\mu(d\mathbf{x})<\frac{\epsilon}{3},
	\]
	where the latter inequality holds because $\mu\in{\mathcal{P}}_{\psi}$ implies
	that $\psi$ is $\mu$-integrable. Thus, we have
	\begin{align}
	\left\vert \int_{\mathbb{R}^{d}}\psi(\mathbf{x})\mu_{n}(d\mathbf{x}%
	)-\int_{\mathbb{R}^{d}}\psi(\mathbf{x})\mu(d\mathbf{x})\right\vert  &
	\leq\left\vert \int_{B(0,r)}\psi(\mathbf{x})\mu_{n}(d\mathbf{x})-\int
	_{B(0,r)}\psi(\mathbf{x})\mu(d\mathbf{x})\right\vert
	\nonumber\label{psimu-ineq}\\
	&  +\left\vert \int_{B^{c}(0,r)}\psi(\mathbf{x})\mu_{n}(d\mathbf{x}%
	)-\int_{B^{c}(0,r)}\psi(\mathbf{x})\mu(d\mathbf{x})\right\vert \\
	&  \leq\left\vert \int_{B(0,r)}\psi(\mathbf{x})\mu_{n}(d\mathbf{x}%
	)-\int_{B(0,r)}\psi(\mathbf{x})\mu(d\mathbf{x})\right\vert +\frac{2\epsilon
	}{3}.\nonumber
	\end{align}
	Since $d_{w}(\mu_{n},\mu)\rightarrow0$ and $\mu$ puts no mass on the set of
	discontinuities of the bounded function $\psi(\mathbf{x})1_{B(0,r)}%
	(\mathbf{x})$, there exists $n_{0}^{\prime}<\infty$ such that
	\[
	\left\vert \int_{B(0,r)}\psi(\mathbf{x})\mu_{n}(d\mathbf{x})-\int_{B(0,r)}%
	\psi(\mathbf{x})\mu(d\mathbf{x})\right\vert <\frac{\epsilon}{3},\qquad\forall
	n\geq n_{0}^{\prime}.
	\]
	Since $\epsilon$ is arbitrary, when substituted back into \eqref{psimu-ineq},
	this shows that
	\[
	\lim_{n\rightarrow\infty}\left\vert \int_{\mathbb{R}^{d}}\psi(\mathbf{x}%
	)\mu_{n}(d\mathbf{x})-\int_{\mathbb{R}^{d}}\psi(\mathbf{x})\mu(d\mathbf{x}%
	)\right\vert =0.
	\]

	We now turn to the proof that $\mathcal{P}_{\psi}(\mathbb{R}^{d})$ is
	separable. Let $\{\boldsymbol{x}_{n}\}$ be a countable dense subset of
	$\mathbb{R}^{d}$, and define
	\[
	{\mathcal{A}}:=\left\{  \sum_{i=1}^{N}c_{n}\delta_{\boldsymbol{x}_{n}}%
	:c_{n}\in\mathbb{Q}_{+},n=1,\ldots,\mathbb{N},\sum_{n=1}^{N}c_{n}=1,\sum
	_{n=1}^{N}c_{n}\psi(\boldsymbol{x}_{n})<\infty,N\in\mathbb{N}\right\}  ,
	\]
	where $\mathbb{Q}_{+}$ is the set of nonnegative rational numbers, and observe
	that $\mathcal{A}$ is a countable subset of ${\mathcal{P}}_{\psi}$. We now
	show that $\mathcal{A}$ is dense in ${\mathcal{P}}_{\psi}$. Fix $\mu
	\in{\mathcal{P}}_{\psi}$ and $\varepsilon>0$. Also, consider the space
	$\mathbb{F}$ of bounded, Lipschitz continuous functions on $\mathbb{R}^{d}$,
	equipped with the norm
	\[
	||f||_{BL}:=\max\left(  \sup_{\mathbf{x},\mathbf{y}\in\mathbb{R}%
		^{d},\mathbf{x}\neq\mathbf{y}}\frac{|f(\mathbf{x})-f(\mathbf{y})|}%
	{|\mathbf{x}-\mathbf{y}|},2\sup_{\mathbf{x}\in\mathbb{R}^{d}}|f(\mathbf{x}%
	)|\right)  ,
	\]
	and let $\mathbb{F}_{1}$ be the subspace of functions with $||f||_{BL}\leq1$.
	Then consider the metric on ${\mathcal{P}}(\mathbb{R}^{d})$ given by
	\[
	d_{BL}(\mu,\nu):=\sup_{f\in\mathbb{F}_{1}}\left\vert \int_{\mathbb{R}^{d}%
	}f(\mathbf{x})\mu(\mathbf{dx})-\int_{\mathbb{R}^{d}}f(\mathbf{x}%
	)\nu(\mathbf{dx})\right\vert .
	\]
	In view of the definition of $d_{\psi}$ in \eqref{psidef} and the fact that
	there exists a constant $C<\infty$ such that $d_{w}(\mu,\nu)\leq C\sqrt
	{d_{BL}(\mu,\nu)}$ (see \cite[p.\ 396]{Dud02}), it suffices to show that there
	exists $\nu\in{\mathcal{A}}$ such that
	\begin{equation}
	\sup_{f\in\mathbb{F}_{1}\cup\{\psi\}}\left\vert \int_{\mathbb{R}^{d}%
	}f(\mathbf{x})\mu(d\mathbf{x})-\int_{\mathbb{R}^{d}}f(\mathbf{x}%
	)\nu(d\mathbf{x})\right\vert \leq\varepsilon.\label{sep-toshow}%
	\end{equation}

	Recalling that $\psi$ is continuous, for each $n\in\mathbb{N}$, choose
	$r_{n}\in(0,\varepsilon/2)$ such that
	\begin{equation}
	\sup_{\mathbf{x}\in B_{r_{n}}(\mathbf{x}_{n})}|\psi(\mathbf{x}%
	)-\psi(\mathbf{x}_{n})|\leq\frac{\varepsilon}{2},\label{sep-ineq1}%
	\end{equation}
	and note that then we also have
	\begin{equation}
	\sup_{\mathbf{x}\in B_{r_{n}}(\mathbf{x}_{n})}|f(\mathbf{x}%
	)-f(\mathbf{x}_{n})|\leq r_{n}\leq\frac{\varepsilon}{2},\quad
	f\in\mathbb{F}_{1}.\label{sep-ineq1.5}%
	\end{equation}
	Now define $\tilde{B}_{n}:=B_{r_{n}}(\mathbf{x}_{n})\setminus\cup
	_{k=1}^{n-1}B_{r_{k}}(\mathbf{x}_{k})$ and $b_{n}:=\mu(\tilde{B}_{n})$.
	Clearly, $\{\tilde{B}_{n}\}_{n\in\mathbb{N}}$ forms a disjoint partition of
	$\mathbb{R}^{d}$ and hence, $\sum_{n=1}^{\infty}b_{n}=1$. Moreover, by
	\eqref{sep-ineq1} and \eqref{sep-ineq1.5} we have for all $f\in\mathbb{F}%
	_{1}\cup\{\psi\}$,
	\begin{equation}
	\left\vert \sum_{n=1}^{\infty}b_{n}f(\mathbf{x}_{n})-\int_{\mathbb{R}^{d}%
	}f(\mathbf{x})\mu(d\mathbf{x})\right\vert \leq\sum_{n=1}^{\infty}b_{n}%
	\sup_{\mathbf{x}\in\tilde{B}_{n}}|f(\mathbf{x}_{n})-f(\mathbf{x}%
	)|\leq\frac{\varepsilon}{2}.\label{sep-ineq2}%
	\end{equation}
	We can assume without loss of generality that $\psi$ is uniformly bounded from
	below away from zero. Since $\int_{\mathbb{R}^{d}}\psi(\mathbf{x}%
	)\mu(\mathbf{dx})$ is finite, this implies $\sum_{n=1}^{\infty}b_{n}%
	\psi(\mathbf{x}_{n})<\infty$, and hence there exists $N\in\mathbb{N}$ such
	that
	\begin{equation}
	\sum_{n=N+1}^{\infty}b_{n}\leq\frac{\varepsilon}{8(\psi(\mathbf{x}%
		_{1})\vee1)}\quad\mbox{ and  }\quad\sum_{n=N+1}^{\infty}b_{n}\psi
	(\mathbf{x}_{n})\leq\frac{\varepsilon}{8}.\label{sep-ineq3}%
	\end{equation}
	Now, for $n=2,\ldots,N$, choose $c_{n}\in\mathbb{Q}_{+}$ such that
	\begin{equation}
	0\leq b_{n}-c_{n}\leq\left(  \frac{b_{n}}{\max(|\psi(\mathbf{x}_{1}%
		)+\psi(\mathbf{x}_{n})|,|\mathbf{x}_{n}-\mathbf{x}_{1}|)}\right)
	\frac{\varepsilon}{4},\label{sep-ineq4}%
	\end{equation}
	and set
	\[
	c_{1}:=b_{1}+\sum_{n=2}^{N}(b_{n}-c_{n})+\sum_{n=N+1}^{\infty}b_{n}.
	\]
	Observe that $\sum_{n=1}^{N}c_{n}=\sum_{n=1}^{\infty}b_{n}=1$, and hence,
	$c_{1}$ also lies in $\mathbb{Q}_{+}$. Set $\nu:=\sum_{n=1}^{N}c_{n}%
	\delta_{\mathbf{x}_{n}}$.
	%lies in
	%$\mathcal{A}$.
	Then, for $f\in\mathbb{F}_{1}\cup\{\psi\}$, using \eqref{sep-ineq4} and
	\eqref{sep-ineq3}, we have
	\begin{align*}
	\left\vert \int_{\mathbb{R}^{d}}f(\mathbf{x})\nu(d\mathbf{x}%
	)-\sum_{n=1}^{\infty}b_{n}f(\mathbf{x}_{n})\right\vert  &  =\left\vert
	\sum_{n=1}^{N}c_{n}f(\mathbf{x}_{n})-\sum_{n=1}^{\infty}b_{n}%
	f(\mathbf{x}_{n})\right\vert \\
	&  \leq\sum_{n=2}^{N}(b_{n}-c_{n})|f(\mathbf{x}_{n})-f(\mathbf{x}%
	_{1})|+\sum_{n=N+1}^{\infty}b_{n}|f(\mathbf{x}_{1})-f(\mathbf{x}%
	_{n})|\\
	&  \leq\frac{\varepsilon}{4}+|f(\mathbf{x}_{1})|\sum_{n=N+1}^{\infty}%
	b_{n}+\sum_{n=N+1}^{\infty}b_{n}|f(\mathbf{x}_{n})|\leq\frac{\varepsilon
	}{2}.
	\end{align*}
	When combined with \eqref{sep-ineq2} this establishes the desired inequality \eqref{sep-toshow}.
\end{proof}

\begin{proof}
	[Proof of Lemma \ref{tightnessfunction}]Let $C<\infty$ and let $\{\mu
	_{n}\}\subset\mathcal{P}_{\psi}(\mathbb{R}^{d})$ be a sequence such that
	$\mathcal{T}(\mu_{n})\leq C$ for all $n$. Now $\lim_{c\rightarrow\infty}%
	\inf_{\mathbf{x}:\left\Vert \mathbf{x}\right\Vert =c}\phi(\psi(\mathbf{x}%
	))=\infty$ because $\lim_{c\rightarrow\infty}\inf_{\mathbf{x}:\left\Vert
		\mathbf{x}\right\Vert =c}\psi\left(  \mathbf{x}\right)  =\infty$ and
	$\lim_{s\rightarrow\infty}\frac{\phi\left(  s\right)  }{s}=\infty.$ Hence, by
	Lemma \ref{tightness} with $g=\phi\circ\psi$, the sequence $\{\mu_{n}\}$ is
	tight in the weak topology, and we have
	\begin{align*}
	&  \lim_{r\rightarrow\infty}\sup_{n}\left\{  \int_{B^{c}(0,r)}\psi
	(\mathbf{x})\mu_{n}(d\mathbf{x})\right\}  \\
	&  \quad=\lim_{r\rightarrow\infty}\sup_{n}\left\{  \int_{B^{c}(0,r)}\phi
	(\psi(\mathbf{x}))\frac{\psi(\mathbf{x})}{\phi(\psi(\mathbf{x}))}\mu
	_{n}(d\mathbf{x})\right\}  \\
	&  \quad\leq\lim_{r\rightarrow\infty}\sup_{n}\left\{  \left(  \sup_{x\in
		B^{c}(0,r)}\frac{\psi(\mathbf{x})}{\phi(\psi(\mathbf{x}))}\right)  \int
	_{B^{c}(0,r)}\phi(\psi(\mathbf{x}))\mu_{n}(d\mathbf{x})\right\}  \leq
	C\lim_{r\rightarrow\infty}\sup_{\mathbf{x}\in B^{c}(0,r)}\frac{\psi
		(\mathbf{x})}{\phi(\psi(\mathbf{x}))}=0.
	\end{align*}
	Thus, by the first assertion of Lemma \ref{Polish}, $\{\mu_{n}\}$ is tight in
	$\mathcal{P}_{\psi}(\mathbb{R}^{d}).$
\end{proof}

\section{Tightness Results}

\label{sec-apC}

\begin{proof}
	[Proof of Lemma \ref{lem:weak_limit_Sanov}]Since $\mathbb{R}^{d}$ is a
	Polish\vspace{0pt} space, to verify weak convergence of a sequence of
	measures in $\mathcal{P}%
	(\mathbb{R}^{d})$ it suffices to consider convergence of integrals
	with respect to the measures of functions $f$ that are uniformly
	continuous. We use the fact \cite[Lemma 3.1.4]{Stroock2010} that there is an
	equivalent metric $m$ on $\mathbb{R}^{d}$, such that if $\mathcal{U}%
	_{b}(\mathbb{R}^{d},m)$ is the space of bounded uniformly continuous functions
	with respect to this metric, then there is a countable dense subset $\left\{
	f_{m}\right\}  _{m\in\mathbb{N}}\subset\mathcal{U}_{b}(\mathbb{R}^{d},m)$.
	Define $K_{m}:=\sup_{\mathbf{x}\in \mathbb{R}^d}\left\vert f_{m}\left(  \mathbf{x}\right)  \right\vert $ and
	$\Delta_{m,i}^{n}:= f_{m}\left(  \bar{\mathbf{X}}_{i}^{n}\right)  -%
	%TCIMACRO{\tint _{\mathbb{R}^{d}}}%
	%BeginExpansion
	{\textstyle\int_{\mathbb{R}^{d}}}
	%EndExpansion
	f_{m}\left(  \mathbf{x}\right)  \bar{\mu}_{i}^{n}\left(  d\mathbf{x}\right)  $. For any
	$\varepsilon>0$, Chebyshev's inequality shows that 
	\begin{equation*}
	\mathbb{P}\left[  \left\vert \frac{1}{n}\sum_{i=1}^{n}
	\int_{\mathbb{R}^{d}}
	f_{m}\left(  \mathbf{x}\right)  \delta_{\bar{\mathbf{X}}_{i}^{n}}\left(  d\mathbf{x}\right)  -\frac{1}{n}\sum_{i=1}^{n}
	\int_{\mathbb{R}^{d}}f_{m}\left(  x\right)  \bar{\mu}_{i}^{n}\left(  dx\right)  \right\vert>\varepsilon\right]  \leq\frac{1}{\varepsilon^{2}}\mathbb{E}\left[  \frac{1}{n^{2}}\sum_{i,j=1}^{n}%
	\Delta_{m,i}^{n}\Delta_{m,j}^{n}\right]  .
	\end{equation*}
	Let
	$\mathcal{F}_{j}^{n}=\sigma(\bar{\mathbf{X}}_{i}^{n},i=1,\ldots,j)$.
	As we show below, by a standard
	conditioning argument, the off-diagonal terms vanish: for $i>j,$\
	\[
	\mathbb{E}\left[  \Delta_{m,i}^{n}\Delta_{m,j}^{n}\right]  =\mathbb{E}\left[  \mathbb{E}\left[  \left.
	\Delta_{m,i}^{n}\Delta_{m,j}^{n}\right\vert \mathcal{F}_{i}^{n}\right]
	\right]  =\mathbb{E}\left[ \mathbb{E}\left[  \left.  \Delta_{m,i}^{n}\right\vert \mathcal{F}%
	_{i}^{n}\right]  \Delta_{m,j}^{n}\right]  =0.
	\]
	Since $|\Delta_{m,i}^{n}|\leq2K_{m}$,
	\[
	\mathbb{P}\left[  \left\vert \frac{1}{n}\sum_{i=1}^{n}%
	%TCIMACRO{\dint _{\mathbb{R}^{d}}}%
	%BeginExpansion
	{\displaystyle\int_{\mathbb{R}^{d}}}
	%EndExpansion
	f_{m}\left(  x\right)  \delta_{\bar{X}_{i}^{n}}\left(  dx\right)  -\frac{1}%
	{n}\sum_{i=1}^{n}%
	%TCIMACRO{\dint _{\mathbb{R}^{d}}}%
	%BeginExpansion
	{\displaystyle\int_{\mathbb{R}^{d}}}
	%EndExpansion
	f_{m}\left(  \mathbf{x}\right)  \bar{\mu}_{i}^{n}\left(  d\mathbf{x}\right)  \right\vert
	>\varepsilon\right]  \leq\frac{4K_{m}^{2}}{n\varepsilon^{2}}\text{.}%
	\]
	Since $(\bar{L}^{n},\hat{\mu}^{n})\Rightarrow\left(  \bar{L},\hat{\mu}\right)
	$ and $\varepsilon>0$\ is arbitrary, by Fatou's lemma,  
	\[
	\mathbb{P}\left[
	%TCIMACRO{\dint _{\mathbb{R}^{d}}}%
	%BeginExpansion
	{\displaystyle\int_{\mathbb{R}^{d}}}
	%EndExpansion
	f_{m}\left(  x\right)  \bar{L}\left(  dx\right)  =%
	%TCIMACRO{\dint _{\mathbb{R}^{d}}}%
	%BeginExpansion
	{\displaystyle\int_{\mathbb{R}^{d}}}
	%EndExpansion
	f_{m}\left(  x\right)  \hat{\mu}\left(  dx\right)  \right]  =1.
	\]
	Now use the property that $\left\{  f_{m},m\in\mathbb{N}\right\}  $\ is
	countable and dense to conclude that $\bar{L}=\hat{\mu}$ a.s.
\end{proof}

\section{Auxiliary Lemmas}

\begin{lemma}
	\label{approx} For every $\mu\in\mathcal{P}(\mathbb{R}^{d})$ with no atoms,
	there exists a sequence $\{\mathbf{x}^{n}\}_{n \in \mathbb{N}} \subset (%
	\mathbb{R}^{dn})_{\neq}$ such that $\mathcal{J}_{\neq}(L_{n}(\mathbf{x}%
	^{n};\cdot))\rightarrow\mathcal{J}(\mu).$
\end{lemma}

\begin{proof}
	Let $\{\mathbf{X}_{n}\}_{n\in\mathbb{N}}$ be a sequence of independent
	$\mathbb{R}^{d}$-valued random variables with common law $\mu.$ For every
	$n\in\mathbb{N}$ let $\mathbf{X}^{n}:=(\mathbf{X}_{1},\ldots,\mathbf{X}_{n})$,
	and denote $L_{n}(\mathbf{X}^{n},\cdot)$ simply by $L_{n}$. Then we have
	\begin{align*}
	\mathbb{E}[\mathcal{J}_{\neq}(L_{n})]  & =\frac{1}{2}\mathbb{E}\left[
	\int_{(\mathbb{R}^{d}\times\mathbb{R}^{d})_{\neq}}\left(  V\left(
	\mathbf{x}\right)  +V\left(  \mathbf{y}\right)  +W\left(  \mathbf{x}%
	,\mathbf{y}\right)  \right)  L_{n}\left(  d\mathbf{x}\right)  L_{n}\left(
	d\mathbf{y}\right)  \right]  \\
	& =\mathbb{E}\left[  \frac{1}{2n^{2}}\sum_{i=1}^{n}\sum_{j=1,j\neq i}\left(
	V\left(  \mathbf{X}_{i}\right)  +V\left(  \mathbf{X}_{j}\right)  +W\left(
	\mathbf{X}_{i},\mathbf{X}_{j}\right)  \right)  \right]  \\
	& =\mathbb{E}\left[  \frac{1}{2n^{2}}\sum_{i=1}^{n}\sum_{j=1,j\neq i}\frac
	{1}{2}\int_{\mathbb{R}^{d}\times\mathbb{R}^{d}}\left(  V\left(  \mathbf{x}%
	\right)  +V\left(  \mathbf{y}\right)  +W\left(  \mathbf{x},\mathbf{y}\right)
	\right)  \mu\left(  d\mathbf{x}\right)  \mu\left(  d\mathbf{y}\right)
	\right]  \\
	& =\frac{n}{n-1}\mathcal{J}(\mu).
	\end{align*}
	By the Glivenko-Cantelli Lemma (or Lemma \ref{lem:weak_limit_Sanov}), $L_{n}$
	converges in distribution to the deterministic measure $\mu$. Hence using the
	Skorokhod Representation (and by introducing a new probability space if
	needed, but which we still denote as $(\Omega,\mathcal{F},\mathbb{P})$) we can
	assume the almost sure convergence to $\mu.$ By Fatou's Lemma, we have
	\[
	\mathcal{J}(\mu)=\lim_{n\rightarrow\infty}\frac{n}{n-1}\mathcal{J}%
	(\mu)=\liminf_{n\rightarrow\infty}\mathbb{E}[\mathcal{J}_{\neq}(L_{n}%
	)]\geq\mathbb{E}[\liminf_{n\rightarrow\infty}\mathcal{J}_{\neq}(L_{n}%
	)]\geq\mathbb{E}[\mathcal{J}_{\neq}(\mu)]=\mathbb{E}[\mathcal{J}(\mu)].
	\]
	From the above we get that a.s.\  $\liminf_{n\rightarrow\infty}\mathcal{J}%
	_{\neq}(L_{n})=\mathcal{J}(\mu),$ and therefore trivially there exists a
	realization $\omega$ such that by setting $\mathbf{x}_{n}=\mathbf{X}%
	_{n}(\omega)$, $\mathcal{J}_{\neq}(L_{n}(\mathbf{x}_{n};\cdot))\rightarrow
	\mathcal{J}(\mu).$
\end{proof}

\section{Proof of Lemma \protect\ref{existanceex}}

\label{ap-existenceex}

Suppose Assumptions \ref{A} and \ref{C} hold. Then there exists at least one
probability measure $\mu $ such that $J(\mu )<\infty $ (e.g., $\mu :=\ell _{|%
	{B}}/\ell (B)$) where $B\subset A$, with $A$ as defined in Assumption \ref{A}%
, satisfies $0<\ell (B)<\infty $). Also, by Assumption \ref{C}\ref{C2},
there exists a sequence $\{\mu _{n}\}$, with each $\mu _{n}\ll \ell $, such
that $\mu _{n}\overset{w}{\rightarrow }\mu $ and ${\mathcal{J}}(\mu
_{n})\rightarrow {\mathcal{J}}(\mu )$. We now argue that we can assume
without loss of generality that $\rho _{n}:=e^{(1-a)V}d\mu _{n}/d\ell $ is
uniformly bounded. For $M\in \mathbb{N}$, define $\mu
_{n}^{M}(A):=\int_{A}\rho _{n}^{M}(\mathbf{x})e^{-(1-a)V}(\mathbf{x})\ell (d%
\mathbf{x})/\int_{\mathbb{R}}\rho _{n}^{M}(\mathbf{x})e^{-(1-a)V(\mathbf{x}%
	)}\ell (d\mathbf{x}),$ where $\rho _{n}^{M}:=\min (M,\rho _{n})$ is clearly
bounded. Since $\rho _{n}^{M}$ is increasing with respect to $M$ and the map 
$(\mathbf{x},\mathbf{y})$ $\mapsto $ $W(\mathbf{x},\mathbf{y})+V(\mathbf{x}%
)+V(\mathbf{y})$ is bounded from below, by an application of the monotone
convergence theorem, $\mu _{n}^{M}\overset{w}{\rightarrow }\mu _{n}$ and $%
\mathcal{J}(\mu _{n}^{M})\rightarrow \mathcal{J}(\mu _{n})$ as $M\rightarrow
\infty $. The first claim of the lemma then follows from a standard
diagonalization argument.

Next, fix $\psi \in \Psi$ and suppose $\mu \in {\mathcal{P}}_{\psi} (\mathbb{%
	R}^d)$. We now show that the approximating sequence $\{\mu_{n}\}$ can be
taken to satisfy $d_{\psi}(\mu_{n},\mu)\rightarrow 0.$ To see this, first
assume that $\mu$ has compact support $K$, and let $\widetilde{K}$ be the
closure of $N_{\epsilon}(K)$, the $\epsilon$-neighborhood of $K$ for some $%
\epsilon > 0$. let $\{\mu_n\}$ be the approximating sequence obtained in the
first part of the lemma, and set $\tilde{\mu}_{n}(\cdot):= \mu_{n}(\cdot\cap 
\widetilde{K})/\mu_{n}(\widetilde{K})$. Note that $\tilde{\mu}_n$ is well
defined for all sufficiently large $n$ since $\lim_{n\rightarrow \infty}
\mu_n (\widetilde{K}) = \lim_{n\rightarrow \infty} \mu_n (N_{\epsilon}(K)) =
\mu (N_{\epsilon}(K)) = 1$ because $\mu_n \overset{w}{\rightarrow} \mu$, $%
\mu (\partial N_{\epsilon}(K)) = 0$, and $K$ is the support of $\mu$. 
\iffalse
Exploiting the fact that the map $(\mathbf{x}, \mathbf{y})$ $\mapsto$ $W(%
\mathbf{x},\mathbf{y})+V(\mathbf{x}) +V(\mathbf{y})$ is lsc and bounded from
below, we see that an application of Fatou's lemma gives 
\begin{eqnarray*}
	\mathcal{J}(\mu ) &=&\lim_{n\rightarrow \infty }\frac{1}{2}\int_{\mathbb{R}%
		^{d}\times \mathbb{R}^{d}}\left( V\left( \mathbf{x}\right) +V\left( \mathbf{y%
	}\right) +W\left( \mathbf{x},\mathbf{y}\right) \right) \mu _{n}\left( d%
	\mathbf{x}\right) \mu _{n}\left( d\mathbf{y}\right) \\
	&\geq &\lim_{n\rightarrow \infty }\mu _{n}^{2}(\widetilde{K}%
	)\lim_{n\rightarrow \infty }\frac{1}{2}\int_{\mathbb{R}^{d}\times \mathbb{R}%
		^{d}}\left( V\left( \mathbf{x}\right) +V\left( \mathbf{y}\right) +W\left( 
	\mathbf{x},\mathbf{y}\right) \right) \tilde{\mu}_{n}\left( d\mathbf{x}%
	\right) \tilde{\mu}_{n}\left( d\mathbf{y}\right) \\
	&=&\mathcal{J}(\mu ).
\end{eqnarray*}
\fi
Similarly, for any closed set $F$, $F \cap \widetilde{K}$ is also closed,
and hence by Portmanteau's theorem, $\limsup_{n} \mu_n (F \cap \widetilde{K}%
) \leq \mu(F \cap \widetilde{K}) = \mu (F)$. Since $\tilde{\mu}_n (%
\widetilde{K}) \rightarrow 1$, this implies that $\limsup_{n} \tilde{\mu}_n
(F) \leq \mu(F)$, which (by another application of Portmanteau's theorem)
shows that $\tilde{\mu}_n \overset{w}{\rightarrow} \mu$. Furthermore, since
all $\tilde{\mu}_{n}$ have the same compact support and converge to $\mu$,
and since $\psi$ is continuous, $d_{\psi} (\tilde{\mu}_{n}, \mu) \rightarrow
0$. Moreover, 
\begin{eqnarray*}
	\lim_{n \rightarrow \infty} \mathcal{J}(\tilde{\mu}_n ) \!\!&=&
	\lim_{n\rightarrow \infty }\int_{\mathbb{R}^{d}\times \mathbb{R}^{d}}\left(
	V\left( \mathbf{x}\right) +V\left( \mathbf{y}\right) +W\left( \mathbf{x},%
	\mathbf{y}\right) \right) \tilde{\mu}_{n}\left( d\mathbf{x}\right) \tilde{\mu%
	}_{n}\left( d\mathbf{y}\right) \\
	& = & \lim_{n \rightarrow \infty} \frac{1}{\mu_n^2(K)} \lim_{n\rightarrow
		\infty }\frac{1}{2}\int_{\mathbb{R}^{d}\times \mathbb{R}^{d}}\left( V\left( 
	\mathbf{x}\right) +V\left( \mathbf{y}\right) +W\left( \mathbf{x},\mathbf{y}%
	\right) \right) \mu_{n}\left( d\mathbf{x}\right) \mu_{n}\left( d\mathbf{y}%
	\right) \\
	& = & \lim_{n\rightarrow \infty }{\mathcal{J}}(\mu_n) = {\mathcal{J}}(\mu).
\end{eqnarray*}
Finally, consider an arbitrary $\mu\in\mathcal{P}_{\psi}(\mathbb{R}^{d})$
(not necessarily with compact support) and let $\nu_{n}(\cdot):= \frac{%
	\mu(\cdot\cap B(0,n))}{\mu(B(0,n))}$. By dominated convergence $%
\int_{B(0,n)}\psi \left( \mathbf{x}\right) \nu _{n}\left( d\mathbf{x}\right)
\rightarrow \int_{\mathbb{R}^{d}}\psi \left( \mathbf{x}\right) \mu\left( d%
\mathbf{x}\right) $, which shows that $d_{\psi}(\nu_{n},\mu)\rightarrow 0$.
Since we also have $\mathcal{J}(\nu_{n})\rightarrow\mathcal{J}(\mu)$, the
desired approximating measures can be found by combining the two
approximations and using a diagonalization argument.

\end{document}